\RequirePackage[l2tabu,orthodox]{nag}
\documentclass[a4paper,11pt]{amsart}
\usepackage{amsmath,amsfonts,amsthm,amssymb,amsthm,stmaryrd}
\usepackage[all]{xy}
\usepackage[margin=1.25in]{geometry}

\usepackage[T1]{fontenc}
\usepackage[sc]{mathpazo}

\usepackage{hyperref}
\hypersetup{
  colorlinks=true,
  allcolors=blue
}

\usepackage{mathrsfs}

\author{Yue Fan}
\title{Construction of the Moduli Space of Higgs Bundles
  Using Analytic Methods}
\date{\today}
\address{Yue Fan, Department of Mathematics, University of Maryland, College
  Park, MD 20742, USA}
\email{\texttt{yuefan@umd.edu}}

\renewcommand{\bar}[1]{\overline{#1}}
\renewcommand{\tilde}[1]{\widetilde{#1}}

\newcommand{\sC}{\mathscr{C}}
\newcommand{\sO}{\mathscr{O}}

\newcommand{\G}{\mathscr{G}}
\newcommand{\A}{\mathscr{A}}
\newcommand{\B}{\mathscr{B}}
\newcommand{\M}{\mathscr{M}}
\newcommand{\U}{\mathscr{U}}
\newcommand{\V}{\mathscr{V}}
\newcommand{\sL}{\mathscr{L}}
\newcommand{\Z}{\mathscr{Z}}
\newcommand{\K}{\mathscr{K}}
\newcommand{\F}{\mathscr{F}}
\newcommand{\E}{\mathscr{E}}
\newcommand{\g}{\mathfrak{g}}

\newcommand{\C}{\mathbb{C}}
\newcommand{\R}{\mathbb{R}}
\newcommand{\bH}{\mathbf{H}}

\newtheorem{proposition}{Proposition}[section]
\newtheorem{lemma}[proposition]{Lemma}

\newtheorem{theorem}[proposition]{Theorem}
\newtheorem{theoremintro}{Theorem}

\newtheorem{corollary}[proposition]{Corollary}

\numberwithin{equation}{section}

\DeclareMathOperator{\im}{im}
\DeclareMathOperator{\Aut}{Aut}

\DeclareMathOperator{\End}{End}
\DeclareMathOperator{\grad}{grad}

\begin{document}
\begin{abstract}
  It is a folklore theorem that the Kuranishi slice method can be used
  to construct the moduli space of semistable Higgs bundles on a
  closed Riemann surface as a complex space. The purpose of this paper
  is to provide a proof in detail. We also give a direct proof that
  the moduli space is locally modeled on an affine GIT quotient of a
  quadratic cone by a complex reductive group.
\end{abstract}
\maketitle
\tableofcontents

\section{Introduction}
Let $X$ be a closed Riemann surface with genus $g\geq2$. Introduced by
Hitchin in the seminal paper \cite{Hitchin1987b}, a Higgs bundle on
$X$ is a pair $(\E,\Phi)$ consisting of a holomorphic bundle $\E\to X$ and a
holomorphic section $\Phi\in H^0(\End\E\otimes\K_X)$, where $\K_X$ is the
canonical bundle of $X$. To obtain a nice moduli space, we recall that
a Higgs bundle $(\E,\Phi)$ is stable if $\mu(\F)<\mu(\E)$ for every
$\Phi$-invariant holomorphic subbundle $0\subsetneq\F\subsetneq\E$,
where $\mu(\F)$ is the slope of $\F$. The semistability is defined by
replacing $\mu(\F)<\mu(\E)$ by $\mu(\F)\leq\mu(\E)$. Finally,
$(\E,\Phi)$ is polystable if it is a direct sum of stable Higgs
bundles with the same slope. In \cite{Hitchin1987b}, Hitchin used the
Kuranishi slice method to construct the moduli space of stable Higgs
bundles first as a smooth manifold and then as a hyperK\"ahler
manifold. Such a method was first introduced by Kuranishi in
\cite{kuranishi1965new} and has been used in several papers to
construct moduli spaces in different contexts (for example, see
\cite{atiyah1978self,lubke1987moduli,Bradlow1991} and \cite[Chapter
7]{Kobayashi2014}). On the other hand, the moduli space of semistable
Higgs bundles was constructed by Nitsure in \cite{nitsure1991moduli}
where $X$ is a smooth projective curve and by Simpson in
\cite{Simpson1994} where $X$ is a smooth projective variety. They both
used Geometric Invariant Theory (GIT for short), and the method is
entirely algebro-geometric. As a consequence, the resulting moduli
space is a quasi-projective variety.

It is a folklore theorem that the Kuranishi slice method can be used
to construct the moduli space of semistable Higgs bundles as a complex
space (for example, see
\cite{wentworth2016higgs,bradlow2012morse}). The purpose of this paper
is to provide a proof in detail. More precisely, the problem is stated
as follows. Fix a smooth Hermitian vector bundle $E\to X$ and let
$\g_E\to X$ be the bundle of skew-Hermitian endomorphisms of $E$. For
convenience, we assume that the degree of $E$ is zero. This condition
is not essential. By the Newlander-Nirenberg theorem, a holomorphic
structure on $E$ (described by holomorphic transition functions) is
equivalent to an integrable Dolbeault operator
$\bar{\partial}_E$. Since $\dim_\C X=1$, the integrability condition
is vacuous. Therefore, via the Chern correspondence, the space of
holomorphic structures on $E$ can be identified with the space $\A$ of
unitary connections on $E$, which is an infinite-dimensional affine
space modeled on $\Omega^1(\g_E)$. Let
$\sC=\A\times\Omega^{1,0}(\g_E^\C)$. Then, the configuration space of
Higgs bundles (with a fixed underlying smooth bundle $E$) is defined
as
\begin{equation*}
  \B=\{(A,\Phi)\in\sC\colon\bar{\partial}_A\Phi=0\}
\end{equation*}
(see \cite{wentworth2016higgs} for more details). Since the complex
gauge group $\G^\C=\Aut(E)$ naturally acts on the space of holomorphic
structures of $E$, it acts on $\A$ and hence also on $\sC$. Then, two
Higgs bundles are isomorphic if and only if they are in the same
$\G^\C$-orbit. Let $\B^{ss}$, $\B^s$ and $\B^{ps}$ be the subspaces of
$\B$ consisting of semistable, stable and polystable Higgs bundles,
respectively. They are $\G^\C$-invariant. The moduli space of
semistable Higgs bundles is defined as the quotient $\M=\B^{ps}/\G^\C$
equipped with the $C^\infty$-topology. Our main result is the
following.
\begin{theoremintro}\label{sec:introduction-is-a-complex-space}
  The moduli space $\M$ is a normal complex space.
\end{theoremintro}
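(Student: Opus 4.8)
The plan is to proceed locally around each polystable point and to identify a neighborhood of its $\G^\C$-orbit in $\M$ with a completely explicit finite-dimensional model, namely an affine GIT quotient of a quadratic cone by a complex reductive group; the normal complex-space structure on $\M$ is then assembled from these local models. So the whole argument splits into (i) constructing the local model at a point $(A_0,\Phi_0)\in\B^{ps}$, (ii) proving that this model is homeomorphic to a $\G^\C$-saturated neighborhood in $\B^{ps}/\G^\C$, and (iii) checking that the models glue into a normal complex space with holomorphic transition maps.

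For (i), fix $(A_0,\Phi_0)\in\B^{ps}$ and let $G\subset\G^\C$ be its stabilizer. The first point, where polystability is essential, is that $G$ is complex reductive: for a polystable Higgs bundle the automorphisms are block scalars on the isotypic decomposition into stable summands, so $G$ is a product of general linear groups. Next I would set up the elliptic deformation complex
\[
  \Omega^0(\g_E^\C)\xrightarrow{\,D_1\,}\Omega^{0,1}(\g_E^\C)\oplus\Omega^{1,0}(\g_E^\C)\xrightarrow{\,D_2\,}\Omega^{1,1}(\g_E^\C),
\]
where $D_1(u)=(\bar{\partial}_{A_0}u,[u,\Phi_0])$ is the infinitesimal $\G^\C$-action and $D_2$ is the linearization of the constraint $\bar{\partial}_A\Phi=0$. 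After passing to suitable Sobolev completions, Hodge theory produces finite-dimensional harmonic spaces $\bH^1$ and $\bH^2$ carrying linear $G$-actions, and the condition $D_1^{*}=0$ gives a Coulomb slice transverse to the orbit. The Kuranishi construction, via the implicit function theorem and the Green operator, then yields a $G$-equivariant holomorphic map $\Psi\colon\U\to\bH^2$ on a $G$-invariant neighborhood $\U$ of $0$ in $\bH^1$ whose zero set $Z=\Psi^{-1}(0)$ is $G$-equivariantly homeomorphic, through the slice, to a neighborhood of $(A_0,\Phi_0)$ in $\B$. Serre duality identifies $\bH^2$ with the dual of $\Lie G$ and exhibits $\Psi$ as the complex moment map for the linear $G$-action on the holomorphic-symplectic vector space $\bH^1$; its leading term is quadratic, so $Z$ is the advertised quadratic cone.

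Step (ii) is the part I expect to be hardest. I would prove that a point of the slice is polystable as a Higgs bundle exactly when the corresponding point of $Z$ lies on a closed $G$-orbit, and semistable exactly when it is GIT-semistable. This is a \emph{local} Hitchin--Kobayashi/Kempf--Ness statement: one compares the behavior of the norm functional along $\G^\C$-orbits in $\B$ with the Kempf--Ness function of the linearized $G$-action on $\bH^1$, so that analytic polystability matches the closed-orbit (polystable) GIT condition. Granting this, the continuous bijection $Z/\!\!/G\to\M$ onto a neighborhood is upgraded to a homeomorphism by a properness argument for the $G$-action on the semistable locus. Then $\M$ acquires the structure of a complex space from the affine GIT quotients $Z/\!\!/G=\operatorname{Spec}\C[Z]^{G}$, provided the transition maps between overlapping slices are biholomorphic; these are, because the slice germ together with its obstruction map $\Psi$ is canonically determined by the holomorphic data up to $G$-equivariant biholomorphism, so changing the base point, the metric, or the gauge induces a holomorphic change of coordinates.

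Finally, for normality I would argue that each local model $Z/\!\!/G$ is normal and conclude that $\M$ is normal since normality is local. Here the content is twofold: that the quadratic cone $Z$ — the zero locus of the components of the holomorphic moment map — is a normal variety (a reduced complete intersection meeting Serre's $R_1$ and $S_2$ conditions, which I would verify from the explicit moment-map description), and that the ring of invariants $\C[Z]^{G}$ of a normal ring under a reductive group is again normal. The principal obstacles are therefore the infinite-dimensional slice and gauge-fixing analysis behind the Kuranishi map, and above all the precise comparison between differential-geometric polystability and closed-orbit GIT stability that makes $Z/\!\!/G\to\M$ a homeomorphism; normality is then a comparatively formal consequence.
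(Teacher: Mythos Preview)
Your architecture for steps (i)--(ii) matches the paper's closely. Two remarks: first, the paper's engine for the local Kempf--Ness comparison in (ii) is not an abstract norm-functional argument but the Yang--Mills--Higgs flow. One perturbs the Kuranishi map along $\G^\C$-orbits so that the pullback of the ambient moment map $\mu$ lands in $\bH^2(C_\mu)$, and then uses Wilkin's convergence results for the flow to show that closed $K^\C$-orbits in the slice contain zeros of this pullback. Second, ``leading term quadratic, so $Z$ is the quadratic cone'' is not enough: a quadratic leading term only gives an analytic germ equivalence after further argument. The paper proves that the Kuranishi obstruction is \emph{exactly} $\tfrac12 H[\cdot,\cdot]$, using the K\"ahler identities $(D'')^*=-i[\ast,D']$ (this is essentially Simpson's formality). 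That exact identification is what makes the local model an honest affine GIT quotient rather than a formal one.

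The genuine gap is in your normality argument. You propose to check directly that the moment-map cone $\nu_{0,\C}^{-1}(0)\subset\bH^1$ is a reduced complete intersection satisfying $R_1$ and $S_2$, ``from the explicit moment-map description''. But for a general linear symplectic representation of a reductive group the complex moment-map fibre over $0$ need not be a complete intersection, reduced, or normal, and nothing you have written isolates what is special about the representations arising from Higgs bundles. The paper does \emph{not} attempt this. Instead it imports normality from the algebraic side: Simpson proved that the representation scheme $\mathbf{R}_{Dol}(X,x,n)$ is normal, and his formality theorem identifies the formal completion of the quadratic cone $Q$ at $0$ with the formal completion of a Luna slice in $\mathbf{R}_{Dol}$ at the corresponding point. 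Normality of $Q$ at $0$ then descends from the formal completion, and normality of $Q\sslash K^\C$ follows since invariants of a normal ring under a reductive group are normal; finally this passes to the analytification. So the paper's proof of normality ultimately leans on algebro-geometric input, and a purely analytic verification along the lines you sketch would require a separate, nontrivial argument that you have not supplied.
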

More can be said about the local structure of $\M$. To state the
theorem, we need some preparation. Recall that the space $\sC$ has a
natural $L^2$-metric $g$ and a compatible complex structure $I$ given
by multiplication by $i$ (see \cite[\S6]{Hitchin1987b}). Let $\G$ be
the subgroup of $\G^\C$ consisting of unitary gauge
transformations. Then, the $\G$-action on $\sC$ is Hamiltonian with
respect to the K\"ahler form $\Omega_I=g(I\cdot,\cdot)$. Hitchin's
equation can be interpreted as a moment map
\begin{equation}\label{eq:HitchinEq}
  \mu(A,\Phi)=F_A+[\Phi,\Phi^*]
\end{equation}
Then, the Hitchin-Kobayashi correspondence (see
\cite{Hitchin1987b,Simpson1988}) states that a Higgs bundle is
polystable if and only if its $\G^\C$-orbit intersects
$\mu^{-1}(0)$. Moreover, the inclusion
$\mu^{-1}(0)\cap\B\hookrightarrow\B^{ps}$ induces a homeomorphism
\begin{equation*}
     (\mu^{-1}(0)\cap\B)/\G\xrightarrow{\sim}\B^{ps}/\G^\C
\end{equation*}
whose inverse is induced by the retraction
$r\colon\B^{ss}\to\mu^{-1}(0)$ defined by the Yang-Mills-Higgs flow
(see \cite{Wilkin2006}). Finally, we recall the deformation complex
for a Higgs bundle $(A,\Phi)$:
\begin{equation*}
  C_{\mu_\C}\colon\qquad\Omega^0(\g_E^\C)\xrightarrow{D''}\Omega^{0,1}(\g_E^\C)\oplus\Omega^{1,0}(\g_E^\C)\xrightarrow{D''}\Omega^{1,1}(\g_E^\C)
\end{equation*}
where $D''=\bar{\partial}_A+\Phi$. It is an elliptic complex. Let $K$
be the $\G$-stabilizer at $(A,\Phi)$. Since the $\G$-action is proper,
$K$ is a compact Lie group. Moreover, its complexification $K^\C$ is
precisely the $\G^\C$-stabilizer at $(A,\Phi)$ (see
Section~\ref{sec:kuran-local-models}) and acts on $\bH^1$
linearly. Then, the local structure of $\M$ is described as follows.
\begin{theoremintro}\label{sec:introduction-local-model}
  Let $[A,\Phi]\in\M$ be a point such that $\mu(A,\Phi)=0$ and $\bH^1$
  its deformation space, the harmonic space $\bH^1(C_{\mu_\C})$
  defined in $C_{\mu_\C}$. Then, the following hold:
  \begin{enumerate}
  \item $\bH^1$ is a complex symplectic vector space.
    
  \item The $K^\C$-action on $\bH^1$ is complex Hamilnotian with a
    complex moment map given by
    \begin{equation*}
      \nu_{0,\C}(x)=\frac{1}{2}H[x,x]
    \end{equation*}
    where $H$ is the harmonic projection defined in $C_{\mu_\C}$.
    
  \item Around $[A,\Phi]$, the moduli space $\M$ is locally
    biholomorphic to an open neighborhood of $[0]$ in the complex
    symplectic quotient $\nu_{0,\C}^{-1}(0)\sslash K^\C$, which is an
    affine GIT quotient.
  \end{enumerate}
\end{theoremintro}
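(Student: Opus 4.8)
The plan is to treat the two linear assertions (1) and (2) as consequences of Hodge theory for the elliptic complex $C_{\mu_\C}$, and to reserve the real work for the geometric identification (3), which rests on the Kuranishi slice constructed in Section~\ref{sec:kuran-local-models}. For (1) I would equip the middle term $\Omega^{0,1}(\g_E^\C)\oplus\Omega^{1,0}(\g_E^\C)$ with the skew bilinear form
\begin{equation*}
  \omega_\C\bigl((a_1,\varphi_1),(a_2,\varphi_2)\bigr)=\int_X\tr(a_1\wedge\varphi_2-a_2\wedge\varphi_1),
\end{equation*}
which is manifestly antisymmetric. The crucial point is that $C_{\mu_\C}$ is self-dual under Serre duality: the duality identifies $\bH^0$ with $(\bH^2)^*$ and restricts on the middle to a perfect pairing of $\bH^1$ with itself. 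I would check that, on harmonic representatives, $\omega_\C$ agrees up to sign with this Serre pairing and is therefore non-degenerate, so that $(\bH^1,\omega_\C)$ is a complex symplectic vector space.

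For (2), the linear $K^\C$-action on $\bH^1$ preserves $\omega_\C$, so it suffices to verify the moment map identity $d\langle\nu_{0,\C},\xi\rangle=\iota_{X_\xi}\omega_\C$ for every $\xi\in\fk^\C=\bH^0$, with fundamental vector field $X_\xi(x)=H[\xi,x]$ and target paired against $\fk^\C$ via the Serre duality $\bH^2\cong(\bH^0)^*$. Since $\xi$ is harmonic, $\langle H[x,x],\xi\rangle=\langle[x,x],\xi\rangle$, and differentiating $\tfrac12\langle H[x,x],\xi\rangle$ in the direction $y$ gives $\langle[x,y],\xi\rangle$; the ad-invariance of the trace pairing then rewrites this as $\omega_\C(H[\xi,x],y)$. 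This is a direct computation.

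The substance is (3). I would first invoke the Kuranishi slice: near $(A,\Phi)$ a local slice for the $\G^\C$-action identifies the germ of $\B$ modulo $\G^\C$ with a finite-dimensional model built on $\bH^1$. The decisive feature of a curve is that the defining equation of $\B$ is \emph{exactly quadratic}: for $x=(a,\varphi)$ one computes $\bar{\partial}_{A+a}(\Phi+\varphi)=D''x+\tfrac12[x,x]$ with no higher-order terms, so $\B$ is the zero locus of the Maurer--Cartan operator $x\mapsto D''x+\tfrac12[x,x]$. Solving the non-harmonic part by the Green operator $G$ through the implicit function theorem yields $x=x_h+x'(x_h)$ with $x'(x_h)=O(\lVert x_h\rVert^2)$ and reduces the equation to the vanishing of the $K^\C$-equivariant Kuranishi map
\begin{equation*}
  k(x_h)=\tfrac12 H[\,x_h+x'(x_h),\,x_h+x'(x_h)\,]=\nu_{0,\C}(x_h)+O(\lVert x_h\rVert^3).
\end{equation*}

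The main obstacle is to pass from this Kuranishi model to the genuine quadratic cone $\nu_{0,\C}^{-1}(0)$: I expect the hard step to be showing that the higher-order corrections in $k$ can be absorbed by a $K^\C$-equivariant local biholomorphism of $\bH^1$, so that the germ $(k^{-1}(0),0)$ is equivariantly biholomorphic to $(\nu_{0,\C}^{-1}(0),0)$. This is exactly the formality (quadraticity) of the deformation differential graded Lie algebra in the sense of Goldman and Millson, which here follows from the $D'D''$-lemma (principle of two types) for the harmonic bundle underlying the polystable point $(A,\Phi)$, where $\mu(A,\Phi)=0$. Granting this, the local model of $\B^{ps}/\G^\C$ becomes the space of closed $K^\C$-orbits in $\nu_{0,\C}^{-1}(0)$; I would finish by combining the Hitchin--Kobayashi correspondence (polystable orbits are precisely the closed orbits, via the retraction $r$) with the Luna slice theorem and the Kempf--Ness principle to identify this closed-orbit quotient, carrying the $C^\infty$-topology, with the affine GIT quotient $\nu_{0,\C}^{-1}(0)\sslash K^\C$, and to upgrade the resulting homeomorphism to a biholomorphism using that the slice is cut out holomorphically.
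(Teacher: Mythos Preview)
Your treatment of (1) and (2) coincides with the paper's. For (3) you take a genuinely different route. You write the Kuranishi obstruction as $k(x_h)=\nu_{0,\C}(x_h)+O(\lVert x_h\rVert^3)$ and then invoke formality of the deformation DGLA (via the $D'D''$-lemma, as in Simpson) to produce a $K^\C$-equivariant biholomorphism straightening $k^{-1}(0)$ onto the quadratic cone. The paper instead shows, by a short direct computation (Theorem~\ref{sec:sing-kuran-spac-diagram}, adapting Huebschmann and Arms--Marsden--Moncrief), that on the Coulomb slice $\ker(D'')^*$ there are \emph{no} higher-order terms to absorb: writing $x=F(\alpha,\eta)=(\alpha,\eta)+(D'')^*G[\alpha,\eta]$, the cross terms $H[(\alpha,\eta),(D'')^*G[\ldots]]$ and $H[(D'')^*G[\ldots],(D'')^*G[\ldots]]$ vanish identically because the K\"ahler identity rewrites $(D'')^*$ as $\pm i D'*$, one has $(\alpha,\eta)\in\ker(D'')^*=\ker D'$ on middle degree, $D'$ is a derivation, and $HD'=0$. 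Thus the Kuranishi space $Z$ is literally an open subset of $\nu_{0,\C}^{-1}(0)$, and no coordinate change is needed. Your approach is more conceptual and makes the link to Simpson's formality theorem explicit; the paper's is self-contained and shows that the single K\"ahler identity underlying the $D'D''$-lemma already does all the work in this gauge. For the final passage to the quotient, the paper has already established in Sections~\ref{sec:kuran-local-models}--\ref{sec:gluing-local-models} (via the Yang--Mills--Higgs flow rather than the Luna/Kempf--Ness package you invoke) that $\Z K^\C\sslash K^\C\hookrightarrow\M$ is an open holomorphic embedding; once $Z$ is identified with an open set in the cone and $\Z K^\C$ is seen to be saturated in $\nu_{0,\C}^{-1}(0)$, (3) follows immediately.
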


There are two reasons why this result is not surprising. In
\cite[\S10]{Simpson1994}, Simpson proved that the differential graded
Lie algebra $C_{\mu_\C}$ is \textit{formal}. As a consequence, the
moduli space is locally biholomorphic to a GIT quotient of a quadratic
cone in $H^1(C_{\mu_\C})$ by a complex reductive group. Another reason
is the following. Recall that $\sC$ is more than just a K\"ahler
manifold. It has a hyperK\"ahler structure (see
\cite[\S6]{Hitchin1987b}) and admits a complex moment map
$\mu_\C(A,\Phi)=\bar{\partial}_A\Phi$ for the $\G^\C$-action. Hence,
the moduli space $\M$ is homeomorphic, by the Hitchin-Kobayashi
correspondence, to a singular hyperK\"ahler quotient. Then,
Theorem~\ref{sec:introduction-local-model} is an infinite-dimensional
generalization of Theorem 1.4(iv) in Mayrand \cite{mayrand2018local}
to Higgs bundles. We will extend all other statements in Theorem 1.4
to $\M$ in a forthcoming paper.

The major step in the proof of
Theorem~\ref{sec:introduction-is-a-complex-space} and
\ref{sec:introduction-local-model} is to construct a Kuranishi local
model for $\M$ at every Higgs bundle $(A,\Phi)$ that satisfies
Hitchin's equation. This is done in
Section~\ref{sec:kuran-local-models}. Here, a Kuranishi local model is
the analytic GIT quotient (developed by Heinzner and Loose in
\cite{heinzner1994reduction}) of a Kuranishi space in $\bH^1$ by the
$\G^\C$-stabilizer at $(A,\Phi)$, and is homeomorphic to an open
neighborhood of $(A,\Phi)$ in $\M$. After that, we will show that the
transition functions associated with Kuranishi local models are
holomorphic so that $\M$ is a complex space. This is done in
Section~\ref{sec:gluing-local-models}. To prove
Theorem~\ref{sec:introduction-local-model}, we adapt Huebschmann's
argument in \cite[Corollary 2.20]{huebschmann1995singularities} which
is further based on Arms-Marsden-Moncrief
\cite{arms1981symmetry}. This is done in
Section~\ref{sec:sing-kuran-spac}.

The techniques in the construction of Kuranishi local models mainly
come from \cite{szekelyhidi2010kahler}, \cite{dervan2018moduli} and
\cite{inoue2019moduli}. Let $K$ be the $\G$-stabilizer at $(A,\Phi)$
with $\mu(A,\Phi)=0$ so that $K^\C$ is the $\G^\C$-stabilizer. We will
construct a $K$-equivariant perturbed Kuranishi map $\Theta$
(following Sz\'ekelyhidi's argument in \cite[Proposition
7]{szekelyhidi2010kahler}) that is defined on a Kuranishi space in
$\bH^1$ and takes values in $\B^{ss}$ such that the pullback moment
map $\Theta^*\mu$ is a moment map for the $K$-action on $\bH^1$ with
respect to the pullback symplectic form $\Theta^*\Omega_I$. Then,
roughly speaking, a $K^\C$-orbit is closed in $\bH^1$ if and only if
it contains a zero of the pullback moment map $\Theta^*\mu$. The
precise statement is given in
Theorem~\ref{sec:kuran-local-models-closedness-polystability}
(cf. \cite[Theorem 2.9]{dervan2018moduli}, \cite[Proposition
3.8]{inoue2019moduli}, \cite[Proposition 2.4]{bronnle2011deformation}
and \cite[Proposition 3.3.2]{van2015deformations}). Since the
perturbed Kuranishi map $\Theta$ is no longer holomorphic,
$\Theta^*\Omega_I$ is not a K\"ahler form on $\bH^1$, which causes
some trouble. To remedy this problem, in the proof of
Theorem~\ref{sec:kuran-local-models-closedness-polystability}, the
Yang-Mills-Higgs flow will be used to detect polystable orbits in
$\B^{ss}$. Since Kuranishi spaces are locally complete, every
Yang-Mills-Higgs flow near $(A,\Phi)$ induce a ``reduced flow'' in
$\bH^1$ that stays in a single $K^\C$-orbit and converges to a zero of
$\Theta$. Therefore, if a $K^\C$-orbit is closed, it contains a zero
of $\Theta$. Hence, $\Theta$ maps polystable $K^\C$-orbits in $\bH^1$
to polystable orbits in $\B^{ss}$ so that $\Theta$ induces a map from
a Kuranishi local model to $\M$. The rest of the proof is to show that
this map is an open embedding.

After the construction of the moduli space $\M$, it is natural to
compare the analytic and the algebraic moduli spaces. More precisely,
let us also use $\M_{an}$ to mean the quotient $\B^{ps}/\G^\C$ and
$\M_{alg}$ the moduli space of semistable Higgs bundles of rank $r$
and degree $0$ in the category of schemes, where $r$ is the rank of
$E$. By construction, $\M_{alg}$ parametrizes s-equivalence classes of
Higgs bundles. Let us recall the definition of s-equivalence. Every
semistable Higgs bundle $(\E,\Phi)$ admits a filtration, called the
Seshadri filtration, whose successive quotients are stable, all with
slope $\mu(E)$. Let $\mathrm{Gr}(\E,\Phi)$ be the graded object
associated with the Seshadri filtration of $(\E,\Phi)$. It is uniquely
determined by the isomorphism class of $(\E,\Phi)$. Then, two Higgs
bundles $(\E_1,\Phi_1)$ and $(\E_2,\Phi_2)$ are s-equivalent if
$\mathrm{Gr}(\E_1,\Phi_1)$ and $\mathrm{Gr}(\E_2,\Phi_2)$ are
isomorphic as Higgs bundles. As a consequence, there is a natural
comparison map $i\colon\M_{an}\to\M_{alg}$ of the underlying sets that
sends each $\G^\C$-orbit of a point $(A,\Phi)$ in $\B^{ps}$ to the
s-equivalence class of the Higgs bundle $(\E_A,\Phi)$ defined by
$(A,\Phi)$. The following result will be proved in
Section~\ref{sec:comp-betw-analyt}.
\begin{theoremintro}\label{sec:introduction-comparison}
  The comparison map $i\colon\M_{an}\to\M_{alg}$ is a biholomorphism
\end{theoremintro}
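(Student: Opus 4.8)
The plan is to verify that $i$ is a bijection, that it is holomorphic, and that its inverse is holomorphic, the last step exploiting the normality of both moduli spaces.

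First I would establish that $i$ is bijective on points. On the analytic side, the Hitchin-Kobayashi correspondence identifies $\M_{an}=\B^{ps}/\G^\C$ with the set of isomorphism classes of polystable Higgs bundles of rank $r$ and degree $0$. On the algebraic side, every semistable Higgs bundle $(\E,\Phi)$ is s-equivalent to its associated graded $\mathrm{Gr}(\E,\Phi)$, which is polystable and uniquely determined up to isomorphism, while a polystable Higgs bundle is its own associated graded. Hence isomorphism classes of polystable Higgs bundles are in natural bijection with s-equivalence classes of semistable ones, and this bijection is precisely $i$. Here one also checks that the analytic notion of (semi/poly)stability, phrased via $\Phi$-invariant holomorphic subbundles, agrees with the algebraic one phrased via $\Phi$-invariant coherent subsheaves; on a curve every such subsheaf is contained in a subbundle of at least the same slope, so the two notions coincide.

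Next I would show that $i$ is holomorphic. The key is that $\M_{alg}$ is a coarse moduli scheme, presented by Nitsure and Simpson as a good quotient of a Quot-type parameter scheme $R$, and that the same corepresentability holds analytically for $\M_{alg}^{an}$ with respect to holomorphic families of semistable Higgs bundles. Over each Kuranishi local model constructed in Section~\ref{sec:kuran-local-models} there is a tautological holomorphic family of Higgs bundles, namely the restriction of the tautological family carried by $\sC\times X$. This family is classified by a holomorphic map from the local model into $R^{an}$ (via the universal property of the Quot scheme, which does represent an analytic functor), and composing with $R^{an}\to\M_{alg}^{an}$ gives a holomorphic classifying map sending each point to the s-equivalence class of the Higgs bundle it represents. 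These maps descend to the local models of $\M_{an}$, agree with $i$ on the dense set of points, and therefore glue to a single holomorphic map equal to $i$.

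It then remains to upgrade $i$ from a holomorphic bijection to a biholomorphism. Both $\M_{an}$ and $\M_{alg}$ are normal reduced complex spaces, the former by Theorem~\ref{sec:introduction-is-a-complex-space} and the latter by the construction of Nitsure and Simpson. On the stable locus $\M^s$, a dense open subset whose complement is a proper analytic subset, the map $i$ is a biholomorphism, since there the Kuranishi models are smooth, the tautological family is genuinely universal, and the analytic and algebraic complex structures manifestly coincide. Granting that $i$ is a homeomorphism, any local holomorphic function $h$ on $\M_{an}$ yields the continuous function $h\circ i^{-1}$ on $\M_{alg}$, which is holomorphic on $\M^s$; by the Riemann extension theorem on the normal space $\M_{alg}$ it is holomorphic everywhere, so $i^{-1}$ is holomorphic and $i$ is a biholomorphism. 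The main obstacle is twofold: justifying the analytic corepresentability used above, which requires either GAGA applied to the GIT construction or feeding the local families through the Quot scheme directly; and proving that $i$ is a homeomorphism, equivalently that $i$ is proper. Since the Higgs moduli space is non-compact this cannot follow from compactness, and instead requires comparing the $C^\infty$-topology on $\M_{an}$ with the analytic topology on $\M_{alg}^{an}$ through the finite-dimensional Kuranishi slices, using that a sequence of Higgs bundles degenerates in one topology exactly when it degenerates in the other.
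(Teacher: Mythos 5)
Your overall skeleton matches the paper's: bijection, holomorphicity via holomorphic families and the coarse moduli property of $\M_{alg}$, a homeomorphism statement, and extension of $i^{-1}$ across the stable locus using normality and a Riemann extension argument. But there are two genuine gaps. The first, and most serious, is the step you explicitly ``grant'': that $i$ is a homeomorphism. Your suggestion to ``compare the two topologies through the Kuranishi slices'' is a restatement of the problem, not an argument, and it misses both of the ideas the paper actually uses. Continuity of $i$ is proved by constructing, on a $C^\infty$-open neighborhood $U$ of each point of $\B^{ss}$, a continuous classifying map $\sigma\colon U\to F^{ss}$ into Nitsure's parameter scheme: one shows that for $A$ near $A_0$ the harmonic projection $\pi_A\colon H^0(\E_A)\to H^0(\E_{A_0})$ is an isomorphism (via Sobolev estimates on $\bar{\partial}_{A_0}^*G_0(as)$, following Sibley--Wentworth), which exhibits $\E_A$ as a quotient of a fixed $\sO_X^p$ depending continuously on $A$; this also requires first twisting by a line bundle so that all the $\E_A$ are globally generated with vanishing $H^1$. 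Properness of $i$ is then deduced not from any local comparison but from the global identity $h_{alg}\circ i=h_{an}$ and the properness of the analytic Hitchin fibration $h_{an}$: preimages of compacta in $\M_{alg}$ are closed subsets of $h_{an}^{-1}h_{alg}(K)$. Neither ingredient is present or hinted at in your proposal, and without them the argument does not close.

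The second gap is the ``tautological holomorphic family carried by $\sC\times X$.'' No such object exists as a holomorphic family over a complex space: $\sC$ is an infinite-dimensional affine space, and the family must instead be constructed by hand over each finite-dimensional Kuranishi space $\Z$, defining a $\bar{\partial}$-operator on $p_X^*E\to X\times\Z$ from the holomorphic dependence of $\theta(t)$ on $t$ and verifying its integrability and the holomorphicity of the Higgs field (Proposition~\ref{sec:holomorphicity-Kuranishi-family}). Your plan to descend the resulting classifying map through the categorical quotient $\Z K^\C\to\Z K^\C\sslash K^\C$ at non-stable points is plausible in outline (the map is $K^\C$-invariant since isomorphic Higgs bundles have equal s-equivalence classes), but it is more than the paper needs: the paper only establishes holomorphicity of $i$ on the stable locus, where the local model is $\Z$ itself and no descent is required, then proves $i^{-1}$ is holomorphic everywhere by the Riemann extension argument you describe, and finally invokes Grauert's theorem on injective holomorphic maps into normal pure-dimensional spaces of the same dimension to conclude that $i^{-1}$, hence $i$, is a biholomorphism. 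If your stronger descent claim were carried out carefully it would render Grauert's theorem unnecessary, which is a genuinely different (and slightly more economical) endgame; but as written it rests on an object that does not exist and on an unargued descent, while the homeomorphism step it depends on is left entirely open.
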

The outline of the proof is the following. It is easy to see that $i$
is a bijection. To show that it is continuous, recall that Nitsure
constructed a scheme $F^{ss}$ in \cite{nitsure1991moduli} that
parameterizes semistable Higgs bundles on $X$, and $\M_{alg}$ is a
good quotient of $F^{ss}$. We show that the comparison map $i$ can be
locally lifted to a map $\sigma$, called a \textit{classifying map},
that is defined locally on $\B^{ss}$ and takes values in
$F^{ss}$. Here, the terminology comes from Sibley and Wentworth's
paper \cite{sibley2019continuity}, and we adapt the proof of Theorem
6.1 in this paper to show that $\sigma$ is continuous with respect to
the $C^\infty$-topology on $\B^{ss}$ and the analytic topology on
$F^{ss}$. Therefore, $i$ is continuous. By the properness of the
Hitchin fibration defined on $\M_{an}$, we see that $i$ is proper and
hence a homeomorphism. Then, by constructing Kuranishi families of
stable Higgs bundles, we show that the restriction
$i\colon\M_{an}^s\to\M_{alg}^s$ is a biholomorphism, where $\M_{an}^s$
and $\M_{alg}^s$ are the open subsets of $\M_{an}$ and $\M_{alg}$
consisting of stable Higgs bundles, respectively. Then, we use
Theorem~\ref{sec:introduction-local-model} to prove that $\M_{an}$ is
normal. Since $\M_{alg}\setminus\M_{alg}^s$ has codimension $\geq2$,
by the normality and the reducedness of $\M_{an}$ and $\M_{alg}$, the
holomorphicity of $i^{-1}|_{\M_{alg}^s}$ can be extended to
$i^{-1}$. The rest of the proof follows from the fact that a
holomorphic bijection between normal, reduced and irreducible complex
spaces of the same dimension is a biholomorphism.

After this paper was complete, we were aware of Buchdahl and
Schumacher's paper \cite{buchdahl2020polystability}. Note that
Theorem~\ref{sec:kuran-local-models-closedness-polystability} is
similar to \cite[Theorem 3]{buchdahl2020polystability}, which applies
to holomorphic vector bundles over a compact K\"ahler
manifold. However, our approaches are different. In this paper, the
Yang-Mills-Higgs flow plays a major role. Since $\dim_\C X=1$, the
necessary analytic inputs are from Wilkin \cite{Wilkin2006}. By
contrast, the Yang-Mills flow is not involved in Buchdahl and
Schumacher's argument. It is expected that Buchdahl and Schumacher's
argument can be adapted to the case of Higgs bundles and used to
provide another proof of
Theorem~\ref{sec:introduction-is-a-complex-space}, possibly without
the assumption that $\dim_\C X=1$.

Finally, we remark that we only work with reduced complex spaces in
this paper. The reason is that the analytic GIT developed by Heinzner
and Loose in \cite{heinzner1994reduction} only apply to reduced
complex spaces.

\vspace{0.5cm}
\noindent\textbf{Acknowledgments}. This paper is part of my Ph.D. thesis. I
would like to thank my advisor, Professor Richard Wentworth, for
suggesting this problem and his generous support and guidance. I also
would like to thank Maxence Mayrand, Johannes Huebschmann, Craig van
Coevering and Eiji Inoue for helpful discussions.

\section{Deformation complexes}
In this section, after reviewing the deformation complex for Higgs
bundles, we introduce another useful Fredholm complex that will be
used later. Let $(A,\Phi)\in\B$ such that $\mu(A,\Phi)=0$. Then,
consider the deformation complex
\begin{gather*}
  C_{\mu_\C}\colon\qquad
  \Omega^0(\g_E^\C)\xrightarrow{D''}\Omega^{0,1}(\g_E^\C)\oplus\Omega^{1,0}(\g_E^\C)\xrightarrow{D''}\Omega^{1,1}(\g_E^\C)
\end{gather*}
where $D''=\bar{\partial}_A+\Phi$. Recall that $C_{\mu_\C}$ is obtained
by linearizing the equation $\bar{\partial}_A\Phi=0$ and the
$\G^\C$-action.
\begin{proposition}[{\cite[\S1]{Simpson1992} and \cite[\S10]{Simpson1994}}]
  $C_{\mu_\C}$ is an elliptic complex and a differential graded Lie
  algebra. Moreover, the K\"ahler's identities
  \begin{equation*}
    (D'')^*=-i[*,D']\qquad (D')^*=+i[*,D'']
  \end{equation*}
  hold, where $D'=\partial_A+\Phi^*$ and $*$ is the Hodge star.
\end{proposition}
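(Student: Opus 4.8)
The plan is to treat the three assertions separately: the differential graded Lie algebra (DGLA) structure together with the complex property, then ellipticity via a symbol computation, and finally the Kähler identities by reduction to the classical case. First I would equip the space $\Omega^\bullet(\g_E^\C)$ of $\g_E^\C$-valued forms, graded by total form degree, with the bracket obtained by combining the wedge product of forms with the Lie bracket on $\g_E^\C$ and the Koszul sign rule; graded antisymmetry and the graded Jacobi identity are then immediate, pointwise, from the corresponding properties of $\wedge$ and of the bracket on $\g_E^\C$. Writing $D''=\bar{\partial}_A+\mathrm{ad}_\Phi$, the operator $\bar{\partial}_A$ is a degree-$1$ derivation of this bracket by the Leibniz rule for the connection, and $\mathrm{ad}_\Phi$ is a degree-$1$ derivation by the graded Jacobi identity, so $D''$ is a derivation. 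To see $(D'')^2=0$ I would expand
\begin{equation*}
  (D'')^2=\bar{\partial}_A^2+\bigl(\bar{\partial}_A\,\mathrm{ad}_\Phi+\mathrm{ad}_\Phi\,\bar{\partial}_A\bigr)+\mathrm{ad}_\Phi^2=[F_A^{0,2},\,\cdot\,]+\mathrm{ad}_{\bar{\partial}_A\Phi}+\tfrac12\,\mathrm{ad}_{[\Phi\wedge\Phi]}.
\end{equation*}
Because $\dim_\C X=1$ we have $F_A^{0,2}=0$ and $[\Phi\wedge\Phi]\in\Omega^{2,0}(\g_E^\C)=0$, while $\bar{\partial}_A\Phi=0$ is exactly the defining condition of $\B$; all three terms vanish, so $C_{\mu_\C}$ is a complex and a DGLA.

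For ellipticity, since $\mathrm{ad}_\Phi$ is a bundle map (order zero) the principal symbol of $D''$ coincides with that of $\bar{\partial}_A$. Fix $0\neq\xi\in T_x^*X$ and write $\xi=\xi^{1,0}+\xi^{0,1}$ with $\xi^{0,1}=\overline{\xi^{1,0}}\neq0$; the symbol of $\bar{\partial}_A$ is $\xi^{0,1}\wedge(\cdot\,)$. The symbol sequence sends $v\mapsto(\xi^{0,1}\wedge v,0)$ in degree $0$ and $(a,\varphi)\mapsto\xi^{0,1}\wedge\varphi$ in degree $1$, the contribution of $\bar{\partial}_A a$ vanishing since $\Omega^{0,2}=0$. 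On the Riemann surface $\xi^{0,1}\wedge(\cdot\,)$ is a fiberwise isomorphism $\g_E^\C\xrightarrow{\sim}(\g_E^\C)^{0,1}$ and $(\g_E^\C)^{1,0}\xrightarrow{\sim}(\g_E^\C)^{1,1}$, which makes the first map injective with image $(\g_E^\C)^{0,1}$, the second map surjective with kernel $(\g_E^\C)^{0,1}$, and the two match; the symbol complex is therefore exact for $\xi\neq0$ and $C_{\mu_\C}$ is elliptic.

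For the Kähler identities I would prove the stated relations separately on the connection part and on the Higgs part of $D''=\bar{\partial}_A+\mathrm{ad}_\Phi$ and $D'=\partial_A+\mathrm{ad}_{\Phi^*}$, then add. The connection part is the classical Nakano–Kähler identity for the Chern connection $\nabla_A=\partial_A+\bar{\partial}_A$ of the Hermitian holomorphic bundle $\g_E^\C$ over the compact Kähler manifold $X$, giving $\bar{\partial}_A^*=-i[*,\partial_A]$ and $\partial_A^*=i[*,\bar{\partial}_A]$ in the paper's convention for the bracket. The Higgs part is purely algebraic: one checks pointwise that the $L^2$-adjoint of $\mathrm{ad}_\Phi=\Phi\wedge(\cdot\,)$ is the corresponding contraction, and that this contraction equals $-i[*,\mathrm{ad}_{\Phi^*}]$, using that $\Phi^*$ is by definition the Hermitian adjoint of $\Phi$ (which conjugate-transposes both the endomorphism and the form type) together with the action of the Hodge star on $1$-forms, namely $*=-i$ on type $(1,0)$ and $*=+i$ on type $(0,1)$; the conjugate identity $\mathrm{ad}_{\Phi^*}^*=i[*,\mathrm{ad}_\Phi]$ then follows by taking adjoints. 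Summing the connection and Higgs contributions yields the stated identities for $D''$ and $D'$.

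The main obstacle I expect is the Higgs part of the last step: correctly identifying the $L^2$-adjoint of the bracket operator $\mathrm{ad}_\Phi$ with a Hodge-star commutator and tracking the factors of $i$ and the Koszul signs so that the algebraic identity has exactly the same shape as the classical one and the two parts combine cleanly. Once the conventions are fixed so that the correspondence $\Phi\mapsto\Phi^*$ is compatible with the Hodge star, everything else is either classical Hodge theory or a routine fiberwise computation.
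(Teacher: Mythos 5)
Your argument is correct, and it is essentially the standard one: the paper itself offers no proof of this proposition, deferring entirely to the cited sections of Simpson, and what you have written out (the DGLA/complex structure from $F_A^{0,2}=0$, $[\Phi\wedge\Phi]=0$ and $\bar{\partial}_A\Phi=0$; ellipticity from the symbol of $\bar{\partial}_A$ alone since $\mathrm{ad}_\Phi$ has order zero; the K\"ahler identities by superposing the classical Nakano identities for the Chern connection with the pointwise identity for the order-zero Higgs part) is precisely the argument those references supply. The one step you assert rather than carry out, the fiberwise verification that $\mathrm{ad}_\Phi^*=-i[*,\mathrm{ad}_{\Phi^*}]$, is correctly isolated as the only nonclassical ingredient and is indeed a routine computation once the convention $*=-i$ on $(1,0)$-forms and $*=+i$ on $(0,1)$-forms is fixed, so there is no genuine gap.
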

There is another useful sequence
\begin{equation*}
  C_{\mu}\colon\qquad \Omega^0(\g_E)\xrightarrow{d_1}\ker D''\xrightarrow{d_2}\Omega^2(\g_E)
\end{equation*}
where $d_2$ is the derivative of $\mu$~\eqref{eq:HitchinEq} at
$(A,\Phi)$, and $d_1(u)=(d_Au,[\Phi,u])$. The operator $d_2$, viewed
as a map $\Omega^1(\g_E)\oplus\Omega^{1,0}(\g_E^\C)\to\Omega^2(\g_E)$,
has a surjective symbol. Hence,
$d_2d_2^*\colon\Omega^2(\g_E)\to\Omega^2(\g_E)$ is a self-adjoint
elliptic operator. As a consequence, the Hodge decomposition
\begin{equation*}
  \Omega^2(\g_E^\C)=\im d_2d_2^*\oplus\ker d_2d_2^*,
\end{equation*}
holds. Moreover, since $d_2(D'')^*=0$ and
\begin{equation*}
  \Omega^{0,1}(\g_E^\C)\oplus\Omega^{1,0}(\g_E^\C)=\ker D''\oplus\im (D'')^*,
\end{equation*}
we have
\begin{equation*}
  d_2(\ker D'')=d_2(\Omega^1(\g_E)\oplus\Omega^{1,0}(\g_E^\C))
\end{equation*}
(In this paper, we routinely identify $\Omega^1(\g_E)$ with
$\Omega^{0,1}(\g_E^\C)$ using the map $\alpha\mapsto\alpha''$, where
$\alpha''$ is the $(0,1)$-component of $\alpha$). As a consequence,
the natural map $\ker d_2^*\to H^2(C_\mu)$ is an isomorphism. We
denote $\ker d_2^*$ by $\bH^2(C_\mu)$. Finally, we note that
$H^1(C_\mu)$ is equal to the first cohomology of the following
elliptic complex that is used by Hitchin in \cite[p. 85]{Hitchin1987b}.
\begin{equation*}
  C_{Hit}\colon\qquad
  \Omega^0(\g_E)\xrightarrow{d_1}\Omega^1(\g_E)\oplus\Omega^{1,0}(\g_E^\C)\xrightarrow{d_2\oplus
  D''}\Omega^2(\g_E)\oplus\Omega^{1,1}(\g_E^\C)
\end{equation*}
In fact, by direct computation, the identification
$\Omega^1(\g_E)\xrightarrow{\sim}\Omega^{0,1}(\g_E^\C)$ induces an
isomorphism
$\bH^1(C_{Hit})\xrightarrow{\sim}\bH^1(C_{\mu_\C})$. Therefore, in the
rest of the paper, if no confusion can appear, we will simply use
$\bH^1$ to mean the harmonic space $\bH^1(C_{\mu_\C})$. In summary, we
have obtained
\begin{proposition}
  The sequence $C_\mu$ is a Fredholm complex with Hodge decomposition
  \begin{equation*}
    \Omega^2(\g_E)=\bH^2(C_\mu)\oplus\im d_2
  \end{equation*}
\end{proposition}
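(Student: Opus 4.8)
The plan is to treat the two assertions separately: the top-degree Hodge decomposition follows almost immediately from the ellipticity of $d_2 d_2^*$, while the Fredholm property reduces to showing that each cohomology group of $C_\mu$ is finite-dimensional. For the Hodge decomposition, I would use that $d_2 d_2^*\colon\Omega^2(\g_E)\to\Omega^2(\g_E)$ is self-adjoint and elliptic, as already recorded. On the compact surface $X$ this gives that $\ker(d_2 d_2^*)$ is finite-dimensional, that $\im(d_2 d_2^*)$ is closed, and that $\Omega^2(\g_E)=\ker(d_2 d_2^*)\oplus\im(d_2 d_2^*)$ orthogonally. Since $d_2 d_2^* x=0$ forces $\|d_2^* x\|^2=\langle d_2 d_2^* x,x\rangle=0$, one has $\ker(d_2 d_2^*)=\ker d_2^*=\bH^2(C_\mu)$. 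It then remains to identify $\im(d_2 d_2^*)$ with $\im d_2$: the inclusion $\im(d_2 d_2^*)\subseteq\im d_2$ is clear, and conversely $\im d_2\perp\ker d_2^*$ because $\langle d_2 v,x\rangle=\langle v,d_2^* x\rangle$ vanishes for $x\in\ker d_2^*$, so $\im d_2\subseteq(\ker d_2^*)^\perp=\im(d_2 d_2^*)$. Combined with the fact recorded above that $d_2(\ker D'')=d_2(\Omega^1(\g_E)\oplus\Omega^{1,0}(\g_E^\C))$, this shows the image of $d_2$ occurring in $C_\mu$ coincides with the closed subspace $\im(d_2 d_2^*)$, yielding the orthogonal splitting $\Omega^2(\g_E)=\bH^2(C_\mu)\oplus\im d_2$.

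For the Fredholm property I would check finite-dimensionality of $H^0$, $H^1$ and $H^2$ one at a time. The top cohomology $H^2(C_\mu)$ is isomorphic to $\bH^2(C_\mu)=\ker d_2^*=\ker(d_2 d_2^*)$, already seen to be finite-dimensional. For $H^0(C_\mu)=\ker d_1$, I would use that $d_1(u)=(d_A u,[\Phi,u])$ has injective principal symbol, since the symbol of $d_A$ is injective and $[\Phi,\cdot]$ is of order zero; hence $d_1^* d_1$ is a self-adjoint elliptic operator on $\Omega^0(\g_E)$, so $\ker d_1=\ker(d_1^* d_1)$ is finite-dimensional and $\im d_1$ is closed. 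For the middle cohomology I would invoke the identification $H^1(C_\mu)\cong H^1(C_{Hit})$ recorded above together with the ellipticity of the Hitchin complex $C_{Hit}$: the latter has finite-dimensional cohomology by standard elliptic theory, so $H^1(C_\mu)$ is finite-dimensional as well. With all three cohomology groups finite-dimensional and the differentials of closed range (from the elliptic estimates for $d_1^* d_1$ and $d_2 d_2^*$, together with the closedness of $\ker D''$), the complex $C_\mu$ is Fredholm.

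The main obstacle I anticipate is the middle term. Because the second space of $C_\mu$ is $\ker D''$ rather than the space of sections of a fixed vector bundle, $C_\mu$ is not literally a complex of differential operators, so ellipticity cannot be read off directly and the Laplacian in the middle degree is not manifestly elliptic. The device that resolves this is exactly the passage to the genuinely elliptic complex $C_{Hit}$ through the isomorphism $H^1(C_\mu)\cong H^1(C_{Hit})$; establishing and using this identification carefully---in particular checking that harmonic representatives correspond under the map $\alpha\mapsto\alpha''$---is where the real work lies. The remaining two degrees, by contrast, reduce cleanly to the elliptic operators $d_1^* d_1$ and $d_2 d_2^*$ isolated in the discussion preceding the statement.
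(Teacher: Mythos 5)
Your proposal is correct and follows essentially the same route as the paper: the proposition there is simply a summary of the preceding discussion, which likewise obtains the top-degree splitting from the self-adjoint elliptic operator $d_2d_2^*$, identifies $d_2(\ker D'')$ with $d_2(\Omega^1(\g_E)\oplus\Omega^{1,0}(\g_E^\C))$ via $d_2(D'')^*=0$, and handles the middle cohomology through the elliptic complex $C_{Hit}$. Your additional remarks on $d_1^*d_1$ and on closed ranges only make explicit what the paper leaves implicit.
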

Lastly, note that the natural non-degenerate pairing
$\Omega^0(\g_E)\times\Omega^2(\g_E)\to\R$ restricts to a
non-degenerate pairing $\bH^0(C_\mu)\times\bH^2(C_\mu)\to\R$ so that
$\bH^2(C_\mu)$ can be identified with the dual space $\bH^0(C_\mu)^*$
of $\bH^0(C_\mu)$.

\section{Kuranishi local models}\label{sec:kuran-local-models}

\subsection{Kuranishi maps}
A crucial ingredient in the Kuranishi slice method is the Kuranishi
maps. They relate polystable orbits in $\bH^1$ and polystable orbits
in $\B$. Moreover, they eventually induce local charts for the moduli
space. To construct Kuranishi maps, we need to use the implicit
function theorem, and it is a standard practice to work with the
Sobolev completions of relevant spaces. In this paper, we will use
$Y_k$ to mean the completion of the space $Y$ with respect to the
Sobolev $L_k^2$-norm. For example, $\Omega^*(\g_E)_k$ means the
completion of $\Omega^*(\g_E)$ with respect to the
$L_k^2$-norm. Otherwise, we generally use $C^\infty$-topology. Fix
$k>1$.

Now, we describe the Kuranishi maps. Let $(A,\Phi)\in\B$ with
$\mu(A,\Phi)=0$. Recall that $\G_{k+1}^\C$ and $\G_{k+1}$ are Hilbert
Lie groups and act smoothly on the Hilbert affine manifold
$\sC_k$. Moreover, the $\G_{k+1}$-action on $\sC_k$ is proper (see
\cite[Section 4.4]{friedman1998gauge}). Therefore, if $K$ is the
$\G_{k+1}$-stabilizer at $(A,\Phi)$, then $K$ is a compact Lie group
with Lie algebra $\bH^0(C_\mu)$. The following result relates the
$\G_{k+1}^\C$-stabilizer to the $\G_{k+1}$-stabilizer at $(A,\Phi)$.
\begin{proposition}\label{sec:kuran-local-models-stabilizers}
  The $\G_{k+1}^\C$-stabilizer at $(A,\Phi)$ is the complexification of
  $K$ and acts on $\bH^1$.
\end{proposition}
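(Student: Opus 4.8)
The plan is to prove the statement in two stages: first at the level of Lie algebras, and then to promote it to the level of groups by exploiting a $C^*$-algebra structure. Throughout, write $H$ for the $\G_{k+1}^\C$-stabilizer at $(A,\Phi)$. Since the action is continuous, $H$ is closed, and by elliptic regularity every $g\in H$, being holomorphic (i.e.\ commuting with $\bar{\partial}_A$), is smooth. I would first identify $\Lie H$ with $\bH^0(C_{\mu_\C})=\ker(D''\colon\Omega^0(\g_E^\C)\to\Omega^{0,1}(\g_E^\C)\oplus\Omega^{1,0}(\g_E^\C))=\{w:\bar{\partial}_Aw=0,\ [\Phi,w]=0\}$, the infinitesimal automorphisms, and recall from the given description of $K$ that $\Lie K=\bH^0(C_\mu)=\{u\in\Omega^0(\g_E):d_Au=0,\ [\Phi,u]=0\}$.

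The heart of the argument is to show $\bH^0(C_{\mu_\C})=\bH^0(C_\mu)^\C$, i.e.\ that $\bar{\partial}_Aw=0$ and $[\Phi,w]=0$ already force $d_Aw=0$ and $[\Phi^*,w]=0$. The inclusion $\bH^0(C_\mu)^\C\subseteq\bH^0(C_{\mu_\C})$ is immediate since $d_Au=0$ implies $\bar{\partial}_Au=0$. For the reverse, I would run a Bochner--Weitzenb\"ock argument: the K\"ahler identities $(D')^*=i[*,D'']$ and $(D'')^*=-i[*,D']$, which hold precisely because $\mu(A,\Phi)=F_A+[\Phi,\Phi^*]=0$, yield the equality of Laplacians $\Delta_{D'}=\Delta_{D''}$ on $\Omega^0(\g_E^\C)$. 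Hence $D''w=0$ implies $D'w=0$, that is $\partial_Aw=0$ and $[\Phi^*,w]=0$; combined with $D''w=0$ this gives $d_Aw=0$ together with $[\Phi,w]=0=[\Phi^*,w]$, and splitting $w$ into skew-Hermitian and Hermitian parts exhibits it in $\bH^0(C_\mu)^\C$. This step is where the hypothesis $\mu(A,\Phi)=0$ is essential, and it is the \emph{main obstacle}: one must track the sign conventions in the K\"ahler identities and the vanishing of the curvature defect carefully.

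With $\Lie H=\bH^0(C_\mu)^\C$ in hand, I would pass to groups. Identify $H$ with the unit group $B^\times$ of the finite-dimensional associative $\C$-algebra $B=\bH^0(C_{\mu_\C})=\{s\in\Omega^0(\g_E^\C):\bar{\partial}_As=0,\ [\Phi,s]=0\}$ of holomorphic endomorphisms of the Higgs bundle commuting with $\Phi$; finite-dimensionality follows from compactness of $X$. The previous step shows $B$ is stable under the Hermitian adjoint $s\mapsto s^*$: given $s\in B$ we have $d_As=0$ and $[\Phi^*,s]=0$, whence $d_As^*=(d_As)^*=0$ gives $\bar{\partial}_As^*=0$, and taking adjoints in $[\Phi^*,s]=0$ gives $[\Phi,s^*]=0$, so $s^*\in B$. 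Thus $B$ is a finite-dimensional $C^*$-algebra, its unitary group $U(B)=\{s\in B:s^*s=1\}$ is exactly $K=H\cap\G_{k+1}$, and the polar decomposition $g=u\,(g^*g)^{1/2}$ of any $g\in B^\times$ stays inside $B^\times$ by functional calculus. Writing $(g^*g)^{1/2}=\exp(X)$ with $X$ Hermitian, i.e.\ $X\in i\,\Lie K$, yields the Cartan decomposition $H=B^\times=K\cdot\exp(i\,\Lie K)$, which identifies $H$ with the complexification $K^\C$.

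Finally, for the action on $\bH^1$: any $g\in H=K^\C$ fixes $(A,\Phi)$, hence commutes with $D''=\bar{\partial}_A+\Phi$ under the adjoint action on $\Omega^*(\g_E^\C)$ and so acts on the cohomology of $C_{\mu_\C}$, i.e.\ on $\bH^1$. Equivalently, and more cleanly, $\bH^1$ is a finite-dimensional complex $K$-module, so by the universal property of complexification the $K$-representation extends uniquely to a holomorphic $K^\C$-representation, which is the required action.
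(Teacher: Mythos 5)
Your proof is correct, but it takes a genuinely different route from the paper. The paper disposes of this proposition in one line by citing Sjamaar's Proposition 1.6, the general fact that for a Hamiltonian action of a compact group on a K\"ahler manifold the $G^\C$-stabilizer at a point of $\mu^{-1}(0)$ is the complexification of the $G$-stabilizer, and then says the action on $\bH^1$ is a ``direct computation.'' You instead give a self-contained argument specific to Higgs bundles: the Bochner--Kodaira step $D''w=0\Rightarrow D'w=0$ at a solution of Hitchin's equation identifies $\bH^0(C_{\mu_\C})$ with $\bH^0(C_\mu)^\C$ and shows the endomorphism algebra $B$ of the Higgs bundle is $*$-closed, after which the polar/Cartan decomposition in the finite-dimensional $C^*$-algebra $B$ gives $H=B^\times=K\exp(i\,\Lie K)=K^\C$ on the nose. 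This is essentially the classical ``polystable implies reductive automorphism group'' argument. What your approach buys is that it sidesteps the question of whether Sjamaar's finite-dimensional K\"ahler result applies verbatim in this infinite-dimensional Sobolev setting (the paper glosses over this), and it produces the group-level statement directly rather than only infinitesimally; what the citation buys is brevity and generality. One small imprecision: the first-order K\"ahler identities $(D'')^*=-i[*,D']$ are formal consequences of the K\"ahler structure and of $\bar\partial_A\Phi=0$, and do not require $\mu(A,\Phi)=0$; it is the second-order consequence $\Delta_{D'}=\Delta_{D''}$ on $\Omega^0(\g_E^\C)$, via $D'D''+D''D'=F_A+[\Phi,\Phi^*]$, that uses Hitchin's equation. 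You flag ``the vanishing of the curvature defect,'' so the substance is right, but the phrasing ``hold precisely because $\mu(A,\Phi)=0$'' attaches the hypothesis to the wrong identity. Your final remark on the $\bH^1$-action is also the cleaner of your two options: $K$ preserves both $\ker D''$ and $\ker(D'')^*$, and the finite-dimensional complex $K$-module structure extends uniquely to $K^\C$.
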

\begin{proof}
  This follows from \cite[Proposition
  1.6]{sjamaar1995holomorphic}. The rest follows from direct
  computation.
\end{proof}

If $\bH^2(C_{\mu_\C})=0$,
then the implicit function theorem implies that $\B_k$ is locally a
complex manifold around $(A,\Phi)$. In general, following
Lyapunov-Schmidt reduction, we consider
\begin{equation*}
  \tilde{\B}_k=[(1-H)\mu_\C]^{-1}(0)\subset\sC_k
\end{equation*}
where $H$ is the harmonic projection defined in the elliptic complex
$C_{\mu_\C}$. By construction, the derivative of $(1-H)\mu_\C$ at
$(A,\Phi)$ is surjective. Hence, $\tilde{\B}_k$ is locally a complex
manifold around $(A,\Phi)$. To parametrize $\tilde{\B}_k$,
consider the map
\begin{gather*}
  F\colon\Omega^{0,1}(\g_E^\C)_k\oplus\Omega^{1,0}(\g_E^\C)_k\to\Omega^{0,1}(\g_E^\C)_k\oplus\Omega^{1,0}(\g_E^\C)_k\\
  F(\alpha,\eta)=(\alpha,\eta)+(D'')^*G[\alpha'',\eta]
\end{gather*}
where $\alpha''$ is the $(0,1)$-component of $\alpha$.
It has the following properties.
\begin{lemma}\ \label{sec:kuran-local-models-Fmap-properties}
  \begin{enumerate}
  \item $F$ is $K^\C$-equivariant.
    
  \item $F$ is a local biholomorphism around $0$.
    
  \item $D''F(\alpha,\eta)=(1-H)\mu_\C(A+\alpha,\Phi+\eta)$.
    
  \item $(D'')^*F(\alpha,\eta)=(D'')^*(\alpha,\eta)$.
  \end{enumerate}
\end{lemma}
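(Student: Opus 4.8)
The plan is to verify each of the four properties of the map
\[
  F(\alpha,\eta)=(\alpha,\eta)+(D'')^*G[\alpha'',\eta]
\]
essentially by direct computation, using the structure of the deformation complex $C_{\mu_\C}$ and the $K^\C$-equivariance of the various operators involved. Here $G$ denotes the Green's operator of the elliptic complex $C_{\mu_\C}$, $H$ the harmonic projection, and these are related by the Hodge decomposition $1=H+\Delta G$, where $\Delta=(D'')^*D''+D''(D'')^*$ is the Laplacian in the appropriate degree. I would treat the four parts roughly in the order $(1),(4),(3),(2)$, since the later parts depend on identities that surface naturally while checking the earlier ones.

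For part $(1)$, $K^\C$-equivariance, the key observation is that $K^\C$ is the $\G^\C_{k+1}$-stabilizer at $(A,\Phi)$, so its elements commute with $D''=\bar\partial_A+\Phi$ and hence with $(D'')^*$, $\Delta$, $G$ and $H$; moreover the bracket $[\cdot,\cdot]$ is equivariant for the adjoint action on $\g_E^\C$. Since $g\in K^\C$ acts diagonally on $\Omega^{0,1}(\g_E^\C)_k\oplus\Omega^{1,0}(\g_E^\C)_k$ by the adjoint action and preserves the $(0,1)$-decomposition, the term $(D'')^*G[\alpha'',\eta]$ transforms the same way as $(\alpha,\eta)$, giving $F(g\cdot(\alpha,\eta))=g\cdot F(\alpha,\eta)$. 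Part $(4)$ is immediate: the perturbation term $(D'')^*G[\alpha'',\eta]$ lies in the image of $(D'')^*$, and applying $(D'')^*$ again kills it because $(D'')^*(D'')^*=0$ (the second-degree composite in the dual complex), so $(D'')^*F(\alpha,\eta)=(D'')^*(\alpha,\eta)$. For part $(3)$, I would expand $\mu_\C(A+\alpha,\Phi+\eta)=\bar\partial_{A+\alpha}(\Phi+\eta)$ and recognize that, because $(A,\Phi)$ satisfies $\bar\partial_A\Phi=0$, the first-order part is $D''(\alpha,\eta)$ and the quadratic part is $[\alpha'',\eta]$, so $\mu_\C(A+\alpha,\Phi+\eta)=D''(\alpha,\eta)+[\alpha'',\eta]$ as a section in $\Omega^{1,1}(\g_E^\C)$. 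Applying $D''$ to $F$ and using $D''(D'')^*G=(1-H)$ on the relevant summand (together with $D''D''=0$) should yield $D''F(\alpha,\eta)=D''(\alpha,\eta)+(1-H)[\alpha'',\eta]$, and since $D''(\alpha,\eta)$ is already in the image of $D''$ hence annihilated by $H$, this equals $(1-H)\bigl(D''(\alpha,\eta)+[\alpha'',\eta]\bigr)=(1-H)\mu_\C(A+\alpha,\Phi+\eta)$.

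Finally, part $(2)$ follows from the inverse function theorem once the derivative $dF_0$ is computed. The bracket term is quadratic in $(\alpha,\eta)$, so its differential at $0$ vanishes, giving $dF_0=\mathrm{id}$; since $F$ is a smooth (indeed holomorphic, being built from the complex-linear operators $(D'')^*$, $G$ and the holomorphic bracket) map between Hilbert spaces with invertible differential at the origin, it is a local biholomorphism there. The main technical obstacle is part $(3)$: one must correctly identify the first- and second-order terms in the expansion of $\bar\partial_{A+\alpha}(\Phi+\eta)$ under the identifications the paper uses (treating $\alpha\in\Omega^1(\g_E)$ via its $(0,1)$-part $\alpha''$), and confirm that the Green-operator identity $D''(D'')^*G+H=1$ holds on the degree in which $[\alpha'',\eta]$ lives; the bookkeeping between $\Omega^1(\g_E)$ and $\Omega^{0,1}(\g_E^\C)$, and ensuring the harmonic projection is taken in the correct complex so that $H$ annihilates exact terms, is where the argument requires genuine care rather than routine manipulation.
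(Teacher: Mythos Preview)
Your proposal is correct and follows essentially the same approach as the paper: $(1)$ via equivariance of $(D'')^*$ and $G$, $(2)$ via the inverse function theorem with $dF_0=\mathrm{id}$, $(4)$ via $(D'')^*(D'')^*=0$, and $(3)$ via the Hodge identity $D''(D'')^*G=1-H$ on $\Omega^{1,1}(\g_E^\C)$ together with $HD''=0$. The only cosmetic difference is that the paper runs the computation for $(3)$ in the reverse direction, starting from $(1-H)\mu_\C(A+\alpha,\Phi+\eta)=D''(D'')^*G\bigl(D''(\alpha,\eta)+[\alpha'',\eta]\bigr)$ and simplifying to $D''F(\alpha,\eta)$, whereas you compute $D''F$ directly; the identities used are the same.
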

\begin{proof}
  $(1)$ follows from the fact that the $K^\C$-action commutes with
  $(D'')^*$ and $G$. Since the derivative of $F$ at $0$ is the identity
  map, the inverse function theorem implies $(2)$. Since
  $(D'')^*(D'')^*=0$, $(4)$ follows. To prove $(3)$, we compute
  \begin{align*}
    &(1-H)\mu_\C(A+\alpha,\Phi+\eta)\\
    &=D''(D'')^*G(D''(\alpha,\eta)+[\alpha'',\eta])\\
    &=D''((\alpha,\eta)-H(\alpha,\eta)
    -D''(D'')^*G(\alpha,\eta)+(D'')^*G[\alpha'',\eta])\\
    &=D''((\alpha,\eta)+(D'')^*G[\alpha,\eta])\\
    &=D''F(\alpha,\eta)
  \end{align*}
\end{proof}
As a consequence, $F$ induces a well-defined map,
\begin{equation*}
  F\colon\tilde{\B}_k\cap[(A,\Phi)+\ker (D'')^*]\to\ker D''\cap\ker (D'')^*=\bH^1
\end{equation*}
Since $\tilde{\B}_k$ and $(A,\Phi)+\ker (D'')^*$ intersect
transversely at $(A,\Phi)$, their intersection is locally a complex
manifold around $(A,\Phi)$. Hence, there are an open ball
$U\subset\bH^1$ in the $L^2$-norm around $0$ and an open neighborhood
$\tilde{U}$ of $(A,\Phi)$ in $\tilde{\B}_k\cap[(A,\Phi)+\ker (D'')^*]$
such that $F\colon \tilde{U}\to U$ is a biholomorphism. The
\textit{Kuranishi map} $\theta$ is defined as its inverse viewed as a
map $\theta\colon U\hookrightarrow\sC_k$, and the \textit{Kuranishi
  space} is defined as $Z:=\theta^{-1}(\B\cap\tilde{U})$. More
concretely, by the construction of $\tilde{\B}_k$,
\begin{equation*}
  Z:=\{x\in U\colon H[\theta(x),\theta(x)]=0\}
\end{equation*}
Here, $(A,\Phi)$ serves as the origin in the affine manifold
$\sC_k$. Clearly, $Z$ is a closed complex subspace of $U$. Moreover,
since $\B^{ss}_k$ is open in $\B_k$ (see \cite[Theorem
4.1]{Wilkin2006}), by shrinking $U$ and hence $Z$ if necessary, we may
assume that $\theta(Z)\subset\B^{ss}_k$.

The next result shows that the Kuranishi space $Z$ is locally
complete.
\begin{proposition}\label{sec:kuran-local-models-local-completeness}
  The map
  \begin{gather*}
    T\colon\bH^0(C_\mu)^\perp_{k+1}\times \bH^2(C_\mu)^\perp_{k+1}\times[((A,\Phi)+\ker
    (D'')^*)\cap\B^{ss}_k]\to\B^{ss}_k\\
    T(u,\beta,B,\Psi)=(B,\Psi)\cdot\exp(-i*\beta)\exp(u)
  \end{gather*}
  is a local homeomorphism around $(0,0,A,\Phi)$. As a consequence,
  there exists an open neighborhood $W$ of $(A,\Phi)$ in $\B^{ss}_k$
  such that the $\G_{k+1}^\C$-orbit of every $(B,\Psi)\in W$ intersects
  the image $\theta(Z)$.
\end{proposition}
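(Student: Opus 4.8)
The plan is to show that $T$ is, near $(0,0,A,\Phi)$, the restriction to $\B$ of a \emph{smooth} local diffeomorphism into the Hilbert manifold $\sC_k$; this sidesteps the fact that $\B^{ss}_k$ may be singular at $(A,\Phi)$. Concretely, extend $T$ to the map
\[
  \hat{T}\colon\bH^0(C_\mu)^\perp_{k+1}\times\bH^2(C_\mu)^\perp_{k+1}\times[(A,\Phi)+\ker(D'')^*]\to\sC_k,\qquad \hat{T}(u,\beta,B,\Psi)=(B,\Psi)\cdot\exp(-i*\beta)\exp(u),
\]
obtained by dropping the constraint $(B,\Psi)\in\B^{ss}_k$ and taking the full holomorphic slice $(A,\Phi)+\ker(D'')^*$ as third factor, with the smooth target $\sC_k$. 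I would prove that $\hat{T}$ is a local diffeomorphism onto an open set $V\subset\sC_k$ and then recover the statement for $T$ by restriction: since $\G_{k+1}^\C$ preserves $\B$, a point $\hat{T}(u,\beta,B,\Psi)=(B,\Psi)\cdot g$ lies in $\B$ if and only if $(B,\Psi)$ does, so $T$ is exactly $\hat{T}|_{\hat{T}^{-1}(\B)}$ and is therefore a homeomorphism onto the open neighborhood $W=\B\cap V$ of $(A,\Phi)$ in $\B^{ss}_k$.

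The heart of the argument is the differential of $\hat{T}$ at $(0,0,A,\Phi)$, which I would show is a Hilbert-space isomorphism onto $T_{(A,\Phi)}\sC_k=\Omega^{0,1}(\g_E^\C)_k\oplus\Omega^{1,0}(\g_E^\C)_k$ (smoothness of the gauge action makes $\hat{T}$ smooth). The slice factor maps by the identity onto $\ker(D'')^*$. Differentiating $\exp(u)$ along $u\in\bH^0(C_\mu)^\perp_{k+1}=(\ker d_1)^\perp$ gives the real infinitesimal gauge action $d_1u$, sweeping out $\im d_1$ bijectively. The essential factor is $\beta\in\bH^2(C_\mu)^\perp_{k+1}=\im d_2$: here $-i*\beta$ is an imaginary (non-unitary) gauge direction, whose fundamental vector field is $\pm I\,d_1(*\beta)$, with $I$ multiplication by $i$. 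Since $d_2=d\mu_{(A,\Phi)}$ is the derivative of the moment map $\mu$ in~\eqref{eq:HitchinEq}, the defining identity $\langle d_2v,\xi\rangle=\Omega_I(d_1\xi,v)=g(I\,d_1\xi,v)$ for $\xi\in\Omega^0(\g_E)$ yields, after taking adjoints and using the Hodge star to identify $\Omega^2(\g_E)$ with $\Omega^0(\g_E)$, the relation $d_2^*\beta=I\,d_1(*\beta)$. Thus the $\beta$-factor sweeps out $\im d_2^*$ bijectively, as $d_2^*$ is injective on $\im d_2=(\ker d_2^*)^\perp$. Putting these together, the complex gauge orbit tangent is $\im d_1\oplus\im d_2^*$, its $L^2$-orthogonal complement is the slice tangent $\ker(D'')^*$, and hence $d\hat{T}(u,\beta,(a,\phi))=(a,\phi)\pm d_1u\pm d_2^*\beta$ is a bounded bijection onto $T_{(A,\Phi)}\sC_k$. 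By the bounded inverse theorem it is an isomorphism, and the inverse function theorem makes $\hat{T}$ a local diffeomorphism onto an open $V\subset\sC_k$.

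To finish, shrink to the neighborhood on which $\hat{T}$ is a diffeomorphism; by the discussion above $T$ is a homeomorphism onto $W=\B\cap V$, which is open in $\B^{ss}_k$ once $V$ is small enough that $\B\cap V\subset\B^{ss}_k$ (using that $\B^{ss}$ is open in $\B$ and contains $(A,\Phi)$). For the consequence, note that near $(A,\Phi)$ the intersection $\B\cap[(A,\Phi)+\ker(D'')^*]$ is exactly $\theta(Z)$: since $\B=\mu_\C^{-1}(0)\subset\tilde{\B}_k$, one has $\theta(Z)=\B\cap\tilde{U}=\B\cap[(A,\Phi)+\ker(D'')^*]$ on a small enough neighborhood. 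Given $(B',\Psi')\in W$, write $(B',\Psi')=T(u,\beta,B,\Psi)=(B,\Psi)\cdot\exp(-i*\beta)\exp(u)$; then $(B,\Psi)\in\theta(Z)$ lies in the $\G_{k+1}^\C$-orbit of $(B',\Psi')$, so that orbit meets $\theta(Z)$, as required. I expect the main obstacle to be the identification of the $\beta$-contribution with the analytic summand $\im d_2^*$ via the Kähler moment-map identity $d_2^*\beta=I\,d_1(*\beta)$, which is the only place using the symplectic geometry rather than linear algebra; a secondary point requiring care is the passage from the smooth local diffeomorphism $\hat{T}$ to the local homeomorphism $T$ onto the possibly singular $\B^{ss}_k$, handled cleanly by the restriction $T=\hat{T}|_{\hat{T}^{-1}(\B)}$ together with the $\G_{k+1}^\C$-invariance of $\B$.
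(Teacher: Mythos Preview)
Your proposal is correct and follows essentially the same strategy as the paper: extend $T$ to a smooth map $\hat T$ into $\sC_k$, show its derivative at $(0,0,A,\Phi)$ is a Hilbert-space isomorphism, apply the inverse function theorem, and then restrict to $\B^{ss}_k$ using $\G_{k+1}^\C$-invariance. The consequence about $\theta(Z)$ is also handled the same way.

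The only real difference is in how the derivative is analyzed. The paper writes it in one stroke as
\[
  d_{(0,0,A,\Phi)}\hat T(u,\beta,x)=D''(u-i*\beta)+x,
\]
observes $\bH^0(C_\mu)^\perp\oplus i*\bH^2(C_\mu)^\perp=\bH^0(C_{\mu_\C})^\perp$ (since $*\bH^2(C_\mu)=\bH^0(C_\mu)$), and invokes the single Hodge decomposition $\ker(D'')^*\oplus\im D''$. You instead identify the $\beta$-contribution via the moment-map identity $d_2^*=-I d_1*$ (which the paper also records, in the proof of Lemma~\ref{sec:kuran-local-models-perturbation}) and then argue that $\im d_1\oplus\im d_2^*$ equals the complex orbit tangent $\im D''$. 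These are the same computation in different packaging: under the identification $\Omega^1(\g_E)\cong\Omega^{0,1}(\g_E^\C)$ one has $d_1u=D''u$ and $d_2^*\beta=D''(-i*\beta)$, so your direct sum $\im d_1\oplus\im d_2^*$ is exactly $D''(\bH^0(C_{\mu_\C})^\perp)=\im D''$. Your route makes the symplectic meaning of the $\beta$-factor explicit; the paper's is shorter since it bypasses $d_2^*$ entirely and works only with $D''$.
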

\begin{proof}
  Consider the map
  \begin{gather*}
    T\colon\bH^0(C_\mu)_{k+1}^\perp\times \bH^2(C_\mu)_{k+1}^\perp\times((A,\Phi)+\ker
    (D'')^*)\to\sC_k\\
    T(u,\beta,B,\Psi)=(B,\Psi)\cdot\exp(-i*\beta)\exp(u)
  \end{gather*}
  where $\bH^0(C_\mu)^\perp$ and $\bH^2(C_\mu)^\perp$ are the
  $L^2$-orthogonal complements of $\bH^0(C_\mu)$ and $\bH^2(C_\mu)$ in
  $\Omega^0(\g_E)$ and $\Omega^2(\g_E)$, respectively. Its derivative
  at $(0,0,A,\Phi)$ is given by
  \begin{equation*}
    d_{(0,0,A,\Phi)}
    T(u,\beta,x)=D''(u-i*\beta)+x
  \end{equation*}
  Note that
  \begin{equation*}
    \bH^0(C_\mu)^\perp\oplus
    i*\bH^2(C_\mu)^\perp=\bH^0(C_\mu)^\perp\oplus i\bH^0(C_\mu)^\perp=\bH^0(C_{\mu_\C})^\perp
  \end{equation*}
  Since
  \begin{equation*}
    \Omega^{0,1}(\g_E^\C)_k\oplus\Omega^{1,0}(\g_E^\C)_k=\ker
    (D'')^*\oplus\im D''
  \end{equation*}
  we conclude that $d_{(0,A,\Phi)}T$ is an isomorphism. Hence, the
  inverse function theorem implies that there are open neighborhoods
  $N_1\times N_2\times V$ of $(0,0,A,\Phi)$ and $W$ of $(A,\Phi)$ such
  that $T\colon N_1\times N_2\times V\to W$ is a diffeomorphism. Since
  $\B^{ss}_k$ is $\G_{k+1}^\C$-invariant, we conclude that
  \begin{equation*}
    T\colon N_1\times N_2\times (V\cap\B^{ss}_k)\to W\cap\B^{ss}_k
  \end{equation*}
  is a homeomorphism. Finally, if $U\subset\bH^1$ is sufficiently
  small, then $\theta$ is a homeomorphism from $Z$ to $V\cap\B^{ss}_k$.
\end{proof}
Moreover, $\theta$ maps $K^\C$-orbits to $\G^\C$-orbits in the
following way.
\begin{proposition}[cf. {\cite[Lemma
    6.1]{chen2014calabi}}]\label{sec:kuran-local-models-infi-slice-theorem}\
  If $U$ is sufficiently small, then the following hold:
  \begin{enumerate}
  \item If $x_1,x_2\in U$ are such that $x_1=x_2g$ for some $g\in
    K^\C$, then $\theta(x_1)=\theta(x_2)g$. Hence, if $x_1\in Z$, then
    $x_2\in Z$.
    
  \item Conversely, if $d_x\theta(v)=u^\#_{\theta(x)}$ for some
    $u\in\Omega^0(\g_E^\C)_{k+1}$, then $u\in\bH^0(C_{\mu_\C})$, and
    $v=u^\#_x$, where $u^\#$ is the infinitesimal action of $u$.
  \end{enumerate}
\end{proposition}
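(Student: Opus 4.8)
My plan is to treat the two parts by different means: part (1) should follow formally from the $K^\C$-equivariance of $F$, whereas part (2) will rest on an elliptic estimate together with the freedom to shrink $U$. For part (1), the point I would exploit is that the a priori nonlinear $\G^\C$-action becomes \emph{linear} on the affine slice through the fixed point $(A,\Phi)$: since any $g\in K^\C$ stabilizes $(A,\Phi)$, it is $\bar{\partial}_A$-holomorphic and commutes with $\Phi$, and a direct computation gives $g\cdot(A+a,\Phi+\eta)=(A+gag^{-1},\Phi+g\eta g^{-1})$ for $(a,\eta)\in\ker(D'')^*$. As $g$ commutes with $D''$ and $(D'')^*$, this adjoint action preserves $\ker(D'')^*$ and $\bH^1$, so the slice is $K^\C$-invariant. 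I would then invoke Lemma~\ref{sec:kuran-local-models-Fmap-properties}(1): $F$, and hence $\theta=F^{-1}$, is equivariant for the adjoint action, and combining this with the identification above yields $\theta(x_2g)=\theta(x_2)g$. The invariance of $Z$ follows since $H[\theta(x),\theta(x)]$ transforms under the adjoint action of $g$ (as $H$ and the bracket commute with $g\in K^\C$), so its zero locus is $K^\C$-invariant.

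For part (2), I would set $y=\theta(x)=(A,\Phi)+(a,\eta)$ with $(a,\eta)=F^{-1}(x)$, and use that the infinitesimal $\G^\C$-action is $u^\#_y=D''_y u$ with $D''_y=D''+E$ and $E=[(a,\eta),\,\cdot\,]$ the first-order deformation of $D''$. Since $\theta$ maps into the affine slice $(A,\Phi)+\ker(D'')^*$, its differential lands in $\ker(D'')^*$; hence the hypothesis $d_x\theta(v)=u^\#_y$ forces $(D'')^*D''_y u=0$, that is, $\Delta_0 u=-(D'')^*Eu$ with $\Delta_0=(D'')^*D''$. Writing $u=u_0+u_1$ with $u_0\in\bH^0(C_{\mu_\C})=\ker\Delta_0$ and $u_1\perp\bH^0(C_{\mu_\C})$, the differentiated equivariance from part (1) gives $(u_0)^\#_y=d_x\theta((u_0)^\#_x)$, so that $(u_1)^\#_y=d_x\theta(v-(u_0)^\#_x)$ again lies in $\ker(D'')^*$. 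It therefore suffices to show $u_1=0$.

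The hard part will be the ensuing estimate. On $\bH^0(C_{\mu_\C})^\perp$ the operator $\Delta_0$ is invertible and $\Delta_0^{-1}(D'')^*$ is bounded, so from $\Delta_0 u_1=-(D'')^*Eu_1$ I would deduce $u_1=-\Delta_0^{-1}(D'')^*[(a,\eta),u_1]$, whence $\|u_1\|\leq C\|(a,\eta)\|\,\|u_1\|$ in the $L^2_{k+1}$-norm by elliptic regularity and the Sobolev multiplication theorem (available since $\dim_\C X=1$ and $k>1$). Shrinking $U$ makes $\|(a,\eta)\|=\|F^{-1}(x)\|$ as small as I like, so $C\|(a,\eta)\|<1$ and $u_1=0$; thus $u=u_0\in\bH^0(C_{\mu_\C})$. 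With $u\in\bH^0(C_{\mu_\C})$ in hand, differentiated equivariance gives $u^\#_y=d_x\theta(u^\#_x)$, and comparing with $d_x\theta(v)=u^\#_y$ together with the injectivity of $d_x\theta$ yields $v=u^\#_x$. I expect this estimate to be the only genuine obstacle: the deformation $E$ can be absorbed into the leading operator $\Delta_0$ only after quotienting out the stabilizer directions $\bH^0(C_{\mu_\C})$ and shrinking $U$ so that $(a,\eta)$ is small, and this is precisely where the hypothesis that $U$ be sufficiently small is used.
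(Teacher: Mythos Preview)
For part~(1) there is a gap. You claim that since $F$ is $K^\C$-equivariant, so is $\theta=F^{-1}$; but $U$ is an $L^2$-ball, hence $K$-invariant and \emph{not} $K^\C$-invariant. Concretely, global equivariance of $F$ gives $F(\theta(x_2)g)=F(\theta(x_2))g=x_2g=x_1=F(\theta(x_1))$, but to conclude $\theta(x_2)g=\theta(x_1)$ you need $\theta(x_2)g$ to lie in the neighborhood $\tilde{U}$ on which $F$ is a biholomorphism, and for noncompact $g\in K^\C$ there is no a priori bound on $\theta(x_2)g$. The paper fills this gap by noting that the open $L^2$-ball $U$ is \emph{orbit-convex} (\cite[Lemma~1.14]{sjamaar1995holomorphic}) and then invoking \cite[Proposition~1.4]{sjamaar1995holomorphic}, which upgrades the $K$-equivariance of the holomorphic map $\theta$ to the required $K^\C$-compatibility on $U$. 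Your observation that the $K^\C$-action becomes linear on the slice is correct and useful, but by itself it does not control where $\theta(x_2)g$ lands.

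Your argument for part~(2) is correct and takes a different route from the paper. After the common step of decomposing $u=u'+u''$ with $u''\in\bH^0(C_{\mu_\C})^\perp$ and checking that $(u'')^\#_{\theta(x)}\in\ker(D'')^*$, the paper re-uses the map $T$ of Proposition~\ref{sec:kuran-local-models-local-completeness}: identifying $\bH^0(C_{\mu_\C})^\perp$ with the first two factors of the domain of $T$, one sees that $d_{(0,0,\theta(x))}T$ sends both $(u'',0)$ and $(0,(u'')^\#_{\theta(x)})$ to $D''_{\theta(x)}u''$, so injectivity of $dT$ (valid for $\theta(x)$ near $(A,\Phi)$) forces $u''=0$. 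Your elliptic estimate $\|u_1\|_{k+1}\leq C\|(a,\eta)\|_k\|u_1\|_{k+1}$ is more hands-on and makes the role of the smallness of $U$ explicit; the paper's approach recycles a local diffeomorphism already established and avoids redoing any analysis. Note also that the infinitesimal equivariance $(u_0)^\#_{\theta(x)}=d_x\theta((u_0)^\#_x)$ you use follows directly from differentiating the global equivariance of $F$, so your part~(2) does not actually depend on the problematic step in your part~(1).
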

\begin{proof}
  Since $U$ is an open ball around $0$, it is orbit-convex by
  \cite[Lemma 1.14]{sjamaar1995holomorphic}. Hence, the holomorphicity
  of $\theta$ and \cite[Proposition 1.4]{sjamaar1995holomorphic} imply
  that $\theta(x_1)=\theta(x_2)g$. Since $\B^{ss}_k$ is
  $\G_{k+1}^\C$-invariant, if $\theta(x_1)\in\B^{ss}_k$, then
  $\theta(x_2)\in\B^{ss}_k$ so that $x_2\in Z$. To prove $(2)$, we
  claim that $u\in\bH^0(C_{\mu_\C})$. Then, the claim implies that
  \begin{equation*}
    v=d_{\theta(x)}F(d_x\theta(v))=d_{\theta(x)}F(u^\#_{\theta(x)})=\frac{d}{dt}\biggr|_{t=0}F(\theta(x)e^{tu})=\frac{d}{dt}\biggr|_{t=0}xe^{tu}=u^\#_x
  \end{equation*}
  To prove the claim, write $u=u'+u''$ for some
  $u'\in\bH^0(C_{\mu_\C})$ and
  $u''\in\bH^0(C_{\mu_\C})^\perp_{k+1}$. Since $\theta$ takes values
  in $(A,\Phi)+\ker (D'')^*$, $(u'')^\#_{\theta(x)}\in\ker
  (D'')^*$. In the proof of
  Proposition~\ref{sec:kuran-local-models-local-completeness}, we see
  that the map
  \begin{equation*}
    T\colon\bH^0(C_\mu)^\perp_{k+1}\times\bH^2(C_{\mu_\C})^\perp_{k+1}\times((A,\Phi)+\ker
    (D'')^*)\to\sC_k
  \end{equation*}
  is a local diffeomorphism around $(0,0,A,\Phi)$. Hence, there are
  open neighborhoods $N_1\times N_2\times V$ of $(0,0,A,\Phi)$ and $W$
  of $(A,\Phi)$ such that $T\colon N_1\times N_2\times V\to W$ is a
  diffeomorphism. If $U$ is sufficiently small,
  $\theta\colon Z\to V\cap\B^{ss}_k$ is a homeomorphism. Therefore, the
  derivative $d_{(0,0,\theta(x))}T$ of $T$ is injective. Note that
  \begin{equation*}
    \bH^0(C_\mu)^\perp\oplus
    i*\bH^2(C_\mu)^\perp=\bH^0(C_{\mu_\C})^\perp
  \end{equation*}
  Then, we see
  that
  \begin{equation*}
    d_{(0,0,\theta(x))}T(u'',0)=D''_{\theta(x)}u''=d_{(0,0,\theta(x))}T(0,(u'')^\#_{\theta(x)})
  \end{equation*}
  so that $u''=0$.
\end{proof}

\subsection{Perturbed Kuranishi maps}
The Hitchin-Kobayashi correspondence characterizes polystable orbits
in $\B^{ss}$ via the moment map $\mu$. Since $\theta$ should
eventually induce a local chart for the moduli space, we should be
able to relate the polystable orbits in $\bH^1$ with respect to the
complex reductive group $K^\C$ to the polystable orbits in
$\B$. Therefore, we would like to pullback the moment map $\mu$ to
$U\subset\bH^1$ by $\theta$ and then use the pullback moment map
$\theta^*\mu$ to characterize polystable orbits in $U$. However,
$\theta^*\mu$ takes values in $\Omega^2(\g_E)_{k-1}$ instead of
$\bH^2(C_\mu)\cong\bH^0(C_\mu)^*$. To fix this issue, we will perturb
the Kuranishi map along $\G^\C$-orbits in the following way.
\begin{lemma}\label{sec:kuran-local-models-perturbation}
  If $U\subset\bH^1$ is sufficiently small, then there is a unique
  smooth function $\beta$ defined on $U$ and taking values in an open
  neighborhood of $0$ in $\bH^2(C_\mu)^\perp_{k+1}$ such that the perturbed
  Kuranishi map $\Theta:=\theta e^{-i*\beta}$ is smooth and
  $K$-equivariant, and $\nu:=\Theta^*\mu$ takes values in
  $\bH^2(C_\mu)$ and hence is a moment map for the $K$-action on $U$
  with respect to the symplectic form $\Theta^*\Omega_I$. Moreover,
  the derivative of $\Theta$ at $0$ is the inclusion map.
\end{lemma}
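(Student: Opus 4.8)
The plan is to produce $\beta$ by the implicit function theorem and then read off the remaining assertions from the construction. Let $\Pi\colon\Omega^2(\g_E)\to\bH^2(C_\mu)$ be the $L^2$-orthogonal projection along $\im d_2$ (this is the Hodge projection of the second Proposition), and consider
\[
  \Psi\colon U\times\bH^2(C_\mu)^\perp_{k+1}\to\im d_2,\qquad
  \Psi(x,\beta)=(1-\Pi)\,\mu\bigl(\theta(x)\,e^{-i*\beta}\bigr).
\]
Here $\mu$ is regarded as a smooth map $\sC_k\to\Omega^2(\g_E)_{k-1}$ and $\theta$ is the Kuranishi map. Since $\theta(0)=(A,\Phi)$ and $\mu(A,\Phi)=0$ we have $\Psi(0,0)=0$. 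Solving $\Psi(x,\beta)=0$ for $\beta$ as a function of $x$ is exactly the requirement that $\mu(\Theta(x))\in\bH^2(C_\mu)$, since then $\mu(\Theta(x))=\Pi\mu(\Theta(x))\in\bH^2(C_\mu)$, so that $\nu=\Theta^*\mu$ is $\bH^2(C_\mu)$-valued.

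The heart of the matter is the linearization $\partial_\beta\Psi(0,0)$. Differentiating the gauge action in the imaginary direction at $(A,\Phi)$ gives $\tfrac{d}{dt}\big|_{0}(A,\Phi)\,e^{-it*\beta}=-I\,d_1(*\beta)$, whence
\[
  \partial_\beta\Psi(0,0)\beta=-(1-\Pi)\,d_2\bigl(I\,d_1(*\beta)\bigr)=-d_2\bigl(I\,d_1(*\beta)\bigr),
\]
the last equality because $\im d_2=\bH^2(C_\mu)^\perp$ is fixed by $1-\Pi$. Using that $\mu$ is the moment map for the $\G$-action with respect to $\Omega_I=g(I\cdot,\cdot)$, one gets for $\zeta,\eta\in\Omega^0(\g_E)$
\[
  \bigl\langle d_2(I\,d_1\zeta),\eta\bigr\rangle=\Omega_I(\eta^\#,I\zeta^\#)=g(d_1\eta,d_1\zeta)=\langle d_1^*d_1\zeta,\eta\rangle,
\]
so that $*\,d_2\,I\,d_1=d_1^*d_1$ on $\Omega^0(\g_E)$. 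This is a self-adjoint elliptic operator with kernel $\bH^0(C_\mu)$, hence an isomorphism on $\bH^0(C_\mu)^\perp$; since $*$ carries $\bH^2(C_\mu)^\perp$ isomorphically onto $\bH^0(C_\mu)^\perp$, the operator $\partial_\beta\Psi(0,0)$ is an isomorphism $\bH^2(C_\mu)^\perp_{k+1}\to\bH^2(C_\mu)^\perp_{k-1}$. The implicit function theorem then yields, after shrinking $U$, a unique smooth $\beta\colon U\to\bH^2(C_\mu)^\perp_{k+1}$ with $\beta(0)=0$ and $\Psi(x,\beta(x))=0$, and $\Theta=\theta\,e^{-i*\beta}$ is smooth with $\nu=\Theta^*\mu$ taking values in $\bH^2(C_\mu)$.

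For $K$-equivariance I would argue by uniqueness. Because $\theta$ satisfies $\theta(xg)=\theta(x)g$ for $g\in K$ (part (1) of Proposition~\ref{sec:kuran-local-models-infi-slice-theorem}), $\mu$ is $K$-equivariant, and $\Pi$ commutes with the $K$-action (as $\bH^2(C_\mu)$ is $K$-invariant), the map $\Psi$ satisfies $\Psi(xg,\operatorname{Ad}_{g^{-1}}\beta)=\operatorname{Ad}_{g^{-1}}\Psi(x,\beta)$; uniqueness forces $\beta(xg)=\operatorname{Ad}_{g^{-1}}\beta(x)$, and then $g\,e^{-i*\operatorname{Ad}_{g^{-1}}\beta}=e^{-i*\beta}\,g$ gives $\Theta(xg)=\Theta(x)g$. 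To see that $\nu$ is a moment map for the $K$-action on $U$ with respect to $\Theta^*\Omega_I$, fix $\xi\in\Lie K=\bH^0(C_\mu)$; by $K$-equivariance $d\Theta(\xi^\#_U)=\xi^\#_\sC$ along $\Theta$, so for any tangent $v$,
\[
  d\langle\nu,\xi\rangle(v)=\Theta^*\bigl(d\langle\mu,\xi\rangle\bigr)(v)=\Omega_I\bigl(\xi^\#_\sC,\,d\Theta(v)\bigr)=(\Theta^*\Omega_I)\bigl(\xi^\#_U,\,v\bigr),
\]
which is the moment map condition; here $\Theta^*\Omega_I$ is symplectic near $0$ because it is closed and, at $0$, restricts to $\Omega_I$ on the $I$-invariant subspace $\bH^1$, where $\Omega_I$ is nondegenerate. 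Finally, $d_0\Theta=d_0\theta-I\,d_1(*\,d_0\beta)$, and since $\partial_x\Psi(0,0)v=(1-\Pi)d_2(d_0\theta(v))=(1-\Pi)d_2(v)=0$ for every $v\in\bH^1\subset\ker d_2$, the implicit function theorem gives $d_0\beta=0$, so $d_0\Theta=d_0\theta$ is the inclusion. The main obstacle is the linearization step: identifying $\partial_\beta\Psi(0,0)$ with the elliptic operator $d_1^*d_1$ via the Kähler/moment-map identities and invoking elliptic regularity on the Sobolev completions, while carefully tracking the one-derivative loss of $\mu$ and the passage between the $\g_E$- and $\g_E^\C$-pictures.
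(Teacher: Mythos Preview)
Your proof is correct and follows the same route as the paper: set up the implicit function problem $(1-H)\mu(\theta(x)e^{-i*\beta})=0$, show the $\beta$-linearization at the origin is an isomorphism between the Sobolev completions of $\bH^2(C_\mu)^\perp$, and deduce $K$-equivariance from uniqueness. The only difference is cosmetic---the paper identifies the linearization directly as $d_2d_2^*$ via the identity $d_2^*=-Id_1*$, whereas you reach the equivalent operator $d_1^*d_1$ (precomposed with $*$) through the moment-map pairing---and you spell out the moment-map property of $\nu$ and the vanishing $d_0\beta=0$, which the paper leaves as one-line remarks.
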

Before giving the proof, we remark that the perturbed Kuranshi map
$\Theta$ is no longer holomorphic and hence the form
$\Theta^*\Omega_I$ is no longer K\"ahler.
\begin{proof}
  We follow the proof of \cite[Proposition
  7]{szekelyhidi2010kahler}. Consider the map
  \begin{gather*}
    L\colon U\times\bH^2(C_\mu)^\perp_{k+1}\to\bH^2(C_\mu)^\perp_{k-1}\\
    L(x,\beta)=(1-H)\mu(\theta(x)e^{-i*\beta})
  \end{gather*}
  where $H$ is the harmonic projection defined in $C_\mu$. Then, the
  derivative of $L$ at $(0,0)$ along the direction $(0,\beta)$ is
  given by
  \begin{equation*}
    d_{(0,0)}L(0,\beta)=(1-H)d_2(-Id_1*\beta)=d_2d_2^*\beta
  \end{equation*}
  where the second equality follows from the formula
  $d_2^*=-Id_1*$. Since
  \begin{equation*}
    d_2d_2^*\colon\bH^2(C_\mu)^\perp_{k+1}\to\bH^2(C_\mu)^\perp_{k-1}
  \end{equation*}
  is an isomorphism, the implicit function theorem guarantees the
  existence of the desired function $\beta$. Since $L$ is
  $K$-equivariant, the uniqueness of $\beta$ implies that $\Theta$ is
  also $K$-equivariant. A direct computation shows that $d_0\Theta$ is
  the inclusion map.
\end{proof}
Although we cannot prove a local slice theorem for the $\G^\C$-action,
the following is a substitute that relates the polystability of Higgs
bundles to that of points in $\bH^1$ with respect to the
$K^\C$-action.
\begin{theorem}\label{sec:kuran-local-models-closedness-polystability}
  If $U$ is sufficiently small, then the induced map
  \begin{equation*}
    U\times_K\G_{k+1}\to\sC_k\qquad [x,g]\mapsto\Theta(x)g
  \end{equation*}
  is injective. Moreover, there is an open ball $B\subset U$ around
  $0$ in the $L^2$-norm such that the following are equivalent for every $x\in B\cap Z$:
  \begin{enumerate}
  \item $xK^\C$ is closed in $\bH^1$.
    
  \item $xK^\C\cap\nu^{-1}(0)\neq\emptyset$.
  \end{enumerate}
\end{theorem}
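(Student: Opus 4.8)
The plan is to treat the two assertions separately: the injectivity of $[x,g]\mapsto\Theta(x)g$ is a slice-type statement for the \emph{unitary} group $\G_{k+1}$ and follows from properness together with the uniqueness in Proposition~\ref{sec:kuran-local-models-local-completeness}, while the equivalence of (1) and (2) is a Kempf--Ness-type statement for the \emph{reductive} group $K^\C$ acting linearly on $\bH^1$, and is where the Yang--Mills--Higgs flow is needed. For injectivity, suppose $\Theta(x_1)g_1=\Theta(x_2)g_2$ with $g_1,g_2\in\G_{k+1}$, so $\Theta(x_1)=\Theta(x_2)g$ for $g=g_2g_1^{-1}$. Since the $\G_{k+1}$-action on $\sC_k$ is proper and both $\Theta(x_i)$ lie in an arbitrarily small neighborhood of $(A,\Phi)$ once $U$ is small, the element $g$ lies in an arbitrarily small neighborhood of the stabilizer $K$; hence I may write $g=k\exp(u_0)$ with $k\in K$ and $u_0\in\bH^0(C_\mu)^\perp_{k+1}$ small. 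Using the $K$-equivariance of $\Theta$ from Lemma~\ref{sec:kuran-local-models-perturbation} and setting $x_2'=x_2k$, the relation becomes
\[
 \theta(x_1)\exp(-i*\beta(x_1))=\theta(x_2')\exp(-i*\beta(x_2'))\exp(u_0).
\]
Both sides are now in the normal form $(B,\Psi)\exp(-i*\beta)\exp(u)$ of Proposition~\ref{sec:kuran-local-models-local-completeness}, with slice points $\theta(x_1),\theta(x_2')\in(A,\Phi)+\ker(D'')^*$ and with $u=0$, $u=u_0$. The uniqueness of that decomposition forces $u_0=0$, $\beta(x_1)=\beta(x_2')$ and $\theta(x_1)=\theta(x_2')$; since $\theta$ is injective on $U$ this gives $x_1=x_2k$ and $g=k\in K$, i.e. $[x_1,g_1]=[x_2,g_2]$.

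For the equivalence the main direction is (1)$\Rightarrow$(2). Given $x\in B\cap Z$ with $xK^\C$ closed, I would start the Yang--Mills--Higgs flow at $\Theta(x)\in\B^{ss}_k$. This flow is the negative gradient flow of $\|\mu\|^2$, so it stays inside the single $\G^\C_{k+1}$-orbit of $\Theta(x)$ and, by Wilkin \cite{Wilkin2006}, converges to a limit on which $\mu$ vanishes. While it remains in the chart of Proposition~\ref{sec:kuran-local-models-local-completeness} I write the flow in normal form and extract its slice component; differentiating the normal form shows that the resulting curve $x(t)$ in $\bH^1$ solves an ODE whose vector field is the infinitesimal $K^\C$-action in the direction $i\nu(x(t))$, precisely because $\nu=\Theta^*\mu$ is a moment map for the $K$-action (Lemma~\ref{sec:kuran-local-models-perturbation}). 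Hence the \emph{reduced flow} $x(t)$ stays in the single orbit $xK^\C$ and is the negative gradient flow of $\tfrac12\|\nu\|^2$ for the pulled-back metric. Since $xK^\C$ is closed and $x(t)$ converges, its limit $x_\infty$ lies in $xK^\C$; and because the associated Higgs bundle $\Theta(x_\infty)$ is the limit of the Yang--Mills--Higgs flow, $\mu(\Theta(x_\infty))=0$, i.e. $\nu(x_\infty)=0$. Thus $xK^\C\cap\nu^{-1}(0)\neq\emptyset$.

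The converse (2)$\Rightarrow$(1) I regard as the more routine direction. If $x'\in xK^\C$ satisfies $\nu(x')=0$, then $\mu(\Theta(x'))=0$, so by the Hitchin--Kobayashi correspondence $\Theta(x')$ is polystable and its orbit $\Theta(x')\G^\C_{k+1}$ is closed in $\B^{ss}_k$. I would then transport this closedness back to $\bH^1$. For any $y\in\overline{x'K^\C}\cap U$, choosing $h_n\in K^\C$ with $x'h_n\to y$, the points $x'h_n$ lie in $U$ for large $n$, so Proposition~\ref{sec:kuran-local-models-infi-slice-theorem} gives $\theta(x')h_n=\theta(x'h_n)\to\theta(y)$ inside the closed orbit $\theta(x')\G^\C_{k+1}$; the uniqueness of the normal form (as in the injectivity step) then forces $\theta(y)=\theta(x')h$ with $h\in K^\C$ and $y=x'h\in x'K^\C$. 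Finally the finite-dimensional linear GIT for the reductive group $K^\C$ on $\bH^1$ — the closure of an orbit contains a unique closed orbit, of strictly smaller dimension when the orbit is not closed — upgrades this local statement to global closedness of $x'K^\C=xK^\C$ in $\bH^1$.

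The hard part will be the construction and control of the reduced flow in (1)$\Rightarrow$(2): because $\Theta$ is not holomorphic, $\Theta^*\Omega_I$ is not K\"ahler, so one cannot invoke the usual K\"ahler Kempf--Ness machinery on $\bH^1$ directly. I expect the crux to be verifying, all within the chart of Proposition~\ref{sec:kuran-local-models-local-completeness}, that the slice component of the flow exists for as long as needed, that it genuinely stays in a single $K^\C$-orbit, and that its convergence is inherited from Wilkin's convergence of the Yang--Mills--Higgs flow; keeping these three facts mutually compatible in the non-K\"ahler setting is the main obstacle.
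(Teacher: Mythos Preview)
Your injectivity argument and your treatment of (1)$\Rightarrow$(2) are essentially correct and close to the paper's, though two remarks: for injectivity the paper first checks that the derivative at $[0,1]$ is injective and then argues by contradiction with sequences rather than writing $g=k\exp(u_0)$ directly; and in (1)$\Rightarrow$(2) the paper does not claim that the reduced flow is driven precisely by $i\nu(x(t))$, only that the velocity $\dot x_t$ is an infinitesimal $K^\C$-action (this is exactly the content of Proposition~\ref{sec:kuran-local-models-infi-slice-theorem}(2)), which is all that is needed.

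The real issue is your (2)$\Rightarrow$(1). You assume that a polystable Higgs bundle has a \emph{closed} $\G^\C_{k+1}$-orbit in $\B^{ss}_k$ and then try to transport this closedness back to $\bH^1$. Neither step works as written. The first is an infinite-dimensional statement the paper never establishes; Lemma~\ref{sec:kuran-local-models-unique-polystable-orbit} only says that the closure of a semistable orbit contains a unique polystable orbit, not that polystable orbits are closed. For the second, even granting closedness you would obtain $\theta(y)=\theta(x')g$ for some $g\in\G^\C_{k+1}$, and your appeal to ``uniqueness of the normal form (as in the injectivity step)'' cannot force $g\in K^\C$: the normal form of Proposition~\ref{sec:kuran-local-models-local-completeness} only covers gauge transformations near the identity, whereas here $g$ is a priori arbitrary. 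Your final ``upgrade from local to global'' via linear GIT is likewise not justified.

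The paper's route for (2)$\Rightarrow$(1) is quite different and is the step you are missing. It argues by contrapositive and introduces the \emph{standard} linear moment map $\nu_0$ on $\bH^1$ (for which the ordinary finite-dimensional Kempf--Ness theorem holds on the nose). If $xK^\C$ is not closed, the gradient flow of $\|\nu_0\|^2$ starting at $x$ stays in $B$ and converges to some $y\in B\cap Z$ with $\nu_0(y)=0$, so $yK^\C$ is closed and $y\in\overline{xK^\C}\setminus xK^\C$. Now apply the already-proven (1)$\Rightarrow$(2) to $y$ to get $y_\infty\in yK^\C$ with $\mu(\Theta(y_\infty))=0$. Assuming also $\mu(\Theta(x))=0$, both $\Theta(x)$ and $\Theta(y_\infty)$ are polystable and lie in the closure of the same $\G^\C$-orbit, so Lemma~\ref{sec:kuran-local-models-unique-polystable-orbit} and the Hitchin--Kobayashi correspondence give $\Theta(y_\infty)\sim_{\G_{k+1}}\Theta(x)$; the injectivity of $[x,g]\mapsto\Theta(x)g$ (the first part of the theorem) then forces $y_\infty\sim_K x$, contradicting $y_\infty\in yK^\C\subset\overline{xK^\C}\setminus xK^\C$. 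The key idea you did not have is this interplay between the two moment maps $\nu$ and $\nu_0$, using the latter to access genuine finite-dimensional GIT on $\bH^1$.
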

\begin{proof}
  The derivative of the induced map at $[0,1]$ is given by
  \begin{equation*}
    \bH^1\oplus\bH^0(C_\mu)^\perp_{k+1}\to\Omega^{0,1}(\g_E^\C)_k\oplus\Omega^{1,0}(\g_E^\C)_k\qquad
    (x,u)\mapsto x+D''u
  \end{equation*}
  Since it is injective, we see that the induced map is locally
  injective around $[0,1]$. Then, we assume to the contrary that such
  $U$ does not exist. Therefore, there are sequences $[x_n,g_n]$ and
  $[x_n',g_n']$ such that
  \begin{enumerate}
  \item $x_n,x_n'$ converge to $0$ in $\bH^1$.

  \item $\Theta(x_n)g_n=\Theta(x_n')g_n'$.
    
  \item $[x_n,g_n]\neq[x_n',g_n']$ for all $n$.
  \end{enumerate}
  Since the $\G_{k+1}$-action is proper, by passing to a subsequence,
  we may assume that $g_n'g_n^{-1}$ converges to some
  $g\in\G_{k+1}$. Letting $n\to\infty$, we see that
  $\Theta(0)=\Theta(0)g$ so that $g\in K$. Now, on the one hand,
  $[x_n',g_n'g_n^{-1}]\neq[x_n,1]$ for any $n$. On the other hand,
  both $[x_n',g_n'g_n^{-1}]$ and $[x_n,1]$ converge to $[0,1]$ so that
  they are equal when $n\gg0$, since the induced map is locally
  injective around $[0,1]$. This is a contradiction.

  Now, we prove the second part of the proposition. By
  Proposition~\ref{sec:kuran-local-models-local-completeness}, there
  are open neighborhoods $N_1\times N_2\times V$ of $(0,0,A,\Phi)$ and
  $W$ of $(A,\Phi)$ such that $T\colon N_1\times N_2\times V\to W$ is
  a homeomorphism. Here, $V$ and $W$ are open subsets in
  $\B^{ss}_k$. If $U$ is sufficiently small, $\theta\colon Z\to V$ is
  a homeomorphism so that
  Proposition~\ref{sec:kuran-local-models-infi-slice-theorem}
  holds. Let $O$ be an open neighborhood of $0$ in
  $\bH^2(C_\mu)^\perp_{k+1}$ such that the smooth function
  $\beta\colon U\to O$ and hence $\Theta:=\theta e^{-i*\beta}$ are
  defined. By shrinking $N_2$ if necessary, we may assume that
  $N_2\subset O$. Then, by \cite[Proposition 3.7]{Wilkin2006}, there
  is an open neighborhood $W'\subset W$ of $(A,\Phi)$ in $\B^{ss}_k$
  such that the Yang-Mills-Higgs flow starting at any Higgs bundle
  inside $W'$ stays and converges in $W$. Moreover, we may assume that
  $T(N'_1\times N_2'\times V')=W'$ for some open neighborhood
  $N'_1\times N'_2\times V'\subset N_1\times N_2\times V$ of
  $(0,0,A,\Phi)$ such that $\theta\colon Z\cap B\to V'$ for some open
  ball $B\subset U$ around $0$.

  Now, suppose $x\in B\cap Z$ is such that $xK^\C$ is closed in
  $\bH^1$. Let $(B_t,\Psi_t)$ be the gradient flow starting at
  $\theta(x)$. By the previous setup, $\theta(x)\in V'\subset W'$ so
  that $(B_t,\Psi_t)$ stays in $W$. Therefore, we may write
  $(B_t,\Psi_t)=\theta(x_t)e^{-i*\beta_t}e^{u_t}$ for some $x_t\in Z$
  and $(u_t,\beta_t)\in N_1\times N_2$. We claim that $x_t$ stays in
  the $K^\C$-orbit of $x$. Since the gradient of $\|\mu\|^2$ is
  tangent to $\G^\C_{k+1}$-orbits, we may write
  $d_x\theta(\dot{x}_t)=(u_t)^\#_{\theta(x_t)}$ for some
  $u_t\in\Omega^0(\g_E)_{k+1}$ that depends on $t$ smoothly. Here,
  $u_t^\#$ is the infinitesimal action of $u_t$. Then,
  Proposition~\ref{sec:kuran-local-models-infi-slice-theorem} implies
  that $u_t\in\bH^0(C_{\mu_\C})$ and $\dot{x}_t=(u_t)^\#_{x_t}$. On
  the other hand, the ordinary differential equation in $K^\C$,
  \begin{equation*}
    g_t^{-1}\dot{g}_t=u_t\qquad g_0=1
  \end{equation*}
  has a unique solution $g_t\in K^\C$. By the uniqueness, we see that
  $x_t=xg_t$. Therefore, the claim follows. Then, the fact that $T$ is
  a homeomorphism implies that both $x_t$, $\beta_t$ and $u_t$
  converge. Therefore, letting $t\to\infty$, we have
  $\theta(x_\infty)e^{-i*\beta_\infty}e^{u_\infty}=(B_\infty,\Psi_\infty)$
  and $\mu(B_\infty,\Psi_\infty)=0$. Since $e^{u_\infty}\in\G_{k+1}$,
  $\theta(x_\infty)e^{-i*\beta_\infty}\in\mu^{-1}(0)$. Since
  $N_2\subset O$, the uniqueness of $\beta$ in
  Lemma~\ref{sec:kuran-local-models-perturbation} implies that
  $\beta(x_\infty)=\beta_\infty$. Hence,
  \begin{equation*}
    \Theta(x_\infty)=\theta(x_\infty)e^{-i*\beta_\infty}\in\mu^{-1}(0)
  \end{equation*}
  Finally, since $xK^\C$ is closed in $\bH^1$, we see that
  $x_\infty\in xK^\C$. Again, by the previous setup,
  $x_\infty\in Z\subset U$.

  Conversely, suppose $xK^\C$ is not closed in $\bH^1$. Since the
  $K$-action on $\bH^1$ is linear and preserves the $L^2$-metric, it
  admits a standard moment map $\nu_0$ such that $\nu_0(0)=0$. Since
  $(\grad\|\cdot\|_{L^2}^2,\grad\|\nu_0\|^2)_{L^2}=8\|\nu_0\|^2$ (see
  \cite[Example 2.3]{sjamaar1995holomorphic}), the gradient flow of
  $\|\nu_0\|^2$ starting at $x$ stays in $B$ and converges to some
  $y\in B\cap Z$ such that $\nu_0(y)=0$. By the Kempf-Ness theorem,
  $yK^\C$ is closed in $\bH^1$. Of course,
  $y\in\bar{xK^\C}\setminus xK^\C$. Hence, by the previous paragraph,
  we can find $y_\infty\in yK^\C\cap U$ such that
  $\mu(\Theta(y_\infty))=0$. Hence, we have
  \begin{equation*}
    \Theta(y_\infty)\sim_{\G_{k+1}^\C}\Theta(y)\in\bar{\Theta(x)\G_{k+1}^\C}
  \end{equation*}
  where $\sim_{\G_{k+1}^\C}$ is the equivalence relation generated by
  the $\G_{k+1}^\C$-action. Now, since $xK^\C$ contains a zero of
  $\nu$ in $U$, we may assume that $\mu(\Theta(x))=0$. Then, the
  following
  Lemma~\ref{sec:kuran-local-models-unique-polystable-orbit}
  implies that $\Theta(y_\infty)\sim_{\G_{k+1}^\C}\Theta(x)$ so that
  $\Theta(y_\infty)\sim_{\G_{k+1}}\Theta(x)$ by the Hitchin-Kobayashi
  correspondence. Then, the injectivity of $[x,g]\mapsto\Theta(x)g$
  implies that $y_\infty\sim_K x$. This is a contradiction.
\end{proof}

The following result is nothing but the fact that the closure of the
$\G_{k+1}^\C$-orbit of a semistable Higgs bundle contains a unique
polystable orbit. Since we cannot find a proof in the literature, we
provide one here:
\begin{lemma}\label{sec:kuran-local-models-unique-polystable-orbit}
  Let $(B,\Psi)$ be a semistable Higgs bundle. If
  $(B_i,\Psi_i)\in\bar{(B,\Psi)\G_{k+1}^\C}$ $(i=1,2)$ are
  polystable Higgs bundles, then
  $(B_1,\Psi_1)\sim_{\G_{k+1}^\C}(B_2,\Psi_2)$.
\end{lemma}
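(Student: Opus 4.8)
The plan is to exploit the Yang--Mills--Higgs flow to single out a canonical polystable representative in the orbit closure, and then to show that every polystable point of the closure agrees with it up to $\G_{k+1}^\C$.

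First I would use the Hitchin--Kobayashi correspondence to normalize the two given points: since $(B_i,\Psi_i)$ are polystable, after acting by $\G_{k+1}^\C$ I may assume $\mu(B_i,\Psi_i)=0$ for $i=1,2$ (this keeps them in their original $\G_{k+1}^\C$-orbits, hence still inside $\overline{(B,\Psi)\G_{k+1}^\C}$). Each is then a fixed point of the gradient flow of $\|\mu\|^2$, so $r(B_i,\Psi_i)=(B_i,\Psi_i)$, where $r\colon\B^{ss}_k\to\mu^{-1}(0)$ is Wilkin's retraction. Because the gradient of $\|\mu\|^2$ is tangent to $\G_{k+1}^\C$-orbits, the flow starting at $(B,\Psi)$ stays inside $(B,\Psi)\G_{k+1}^\C$ and converges to $q:=r(B,\Psi)\in\mu^{-1}(0)\cap\overline{(B,\Psi)\G_{k+1}^\C}$, which is polystable. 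It therefore suffices to prove that any polystable point of $\overline{(B,\Psi)\G_{k+1}^\C}$ lying in $\mu^{-1}(0)$ is $\G_{k+1}^\C$-equivalent to $q$; applying this to $(B_1,\Psi_1)$ and $(B_2,\Psi_2)$ then yields $(B_1,\Psi_1)\sim_{\G_{k+1}^\C}q\sim_{\G_{k+1}^\C}(B_2,\Psi_2)$.

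To carry out that reduction I would choose a sequence $g_n\in\G_{k+1}^\C$ with $(B,\Psi)g_n\to(B_1,\Psi_1)$ and apply the flow to each term. Since $r$ is continuous and $(B_1,\Psi_1)$ is a flow fixed point, $r((B,\Psi)g_n)\to r(B_1,\Psi_1)=(B_1,\Psi_1)$. Every $r((B,\Psi)g_n)$ is the flow limit of a point of the single orbit $(B,\Psi)\G_{k+1}^\C$, and so is $q$. The decisive point is that flow limits of points of one and the same $\G_{k+1}^\C$-orbit coincide up to the unitary gauge group, i.e. $r((B,\Psi)g)\sim_{\G_{k+1}}q$ for every $g\in\G_{k+1}^\C$. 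Granting this, all the points $r((B,\Psi)g_n)$ lie in the single orbit $\G_{k+1}\cdot q$; this orbit is closed because the $\G_{k+1}$-action on $\sC_k$ is proper, so the limit $(B_1,\Psi_1)$ belongs to $\G_{k+1}\cdot q$, whence $(B_1,\Psi_1)\sim_{\G_{k+1}^\C}q$.

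The main obstacle is exactly the claim that the flow limit depends only on the $\G_{k+1}^\C$-orbit up to $\G_{k+1}$; this is the infinite-dimensional analogue of the uniqueness of the closed orbit in the closure of a $G$-orbit in linear GIT, and it is where the non-compactness of $\G_{k+1}^\C$ and the absence of a global slice for its action bite. I would establish it through the convexity of the integrated moment map (the Kempf--Ness/Donaldson functional) along the geodesics $t\mapsto(B,\Psi)\,e^{t\xi}$ of the symmetric space $\G_{k+1}^\C/\G_{k+1}$: its critical points are precisely the zeros of $\mu$ in the orbit, and geodesic convexity forces any two minimizing limits to lie in a single $\G_{k+1}$-orbit. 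On a curve the required convergence and convexity are furnished by Wilkin's analysis of the Yang--Mills--Higgs flow, so that $q=r(B,\Psi)$ realizes the unique polystable (minimal-norm) orbit in $\overline{(B,\Psi)\G_{k+1}^\C}$. Alternatively, one can transport the finite-dimensional Kempf--Ness uniqueness through the Kuranishi local model at $q$ by means of the local completeness of Proposition~\ref{sec:kuran-local-models-local-completeness}, reducing the statement to the linear $K^\C$-action on $\bH^1$.
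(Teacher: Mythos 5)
Your overall skeleton coincides with the paper's: normalize so that $\mu(B_i,\Psi_i)=0$, push a sequence of the orbit converging to $(B_i,\Psi_i)$ through Wilkin's retraction $r$, and use the continuity of $r$ together with the closedness of $\G_{k+1}$-orbits (properness of the $\G_{k+1}$-action) to pass to the limit. The entire content of the lemma is then concentrated in what you call the decisive point, namely that $r((B,\Psi)g)\sim_{\G_{k+1}}r(B,\Psi)$ for every $g\in\G_{k+1}^\C$, and this is exactly where your argument has a gap. The geodesic-convexity/Kempf--Ness argument you invoke proves uniqueness, up to $\G_{k+1}$, of the zeros of $\mu$ \emph{inside} a single $\G_{k+1}^\C$-orbit, i.e.\ it handles the polystable case; but when $(B,\Psi)$ is strictly semistable its orbit contains no zero of $\mu$ at all, the integrated moment map has no critical point on $\G_{k+1}^\C/\G_{k+1}$, and the flow limits $r((B,\Psi)g)$ lie in the \emph{boundary} of the orbit, about which convexity along geodesics of the orbit says nothing. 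The paper closes this gap with a specific analytic input that you do not state: by \cite[Theorem 1.4]{Wilkin2006} the Yang--Mills--Higgs flow limit of a semistable Higgs bundle is isomorphic, as a Higgs bundle, to the graded object $\mathrm{Gr}(B,\Psi)$ of its Seshadri filtration. Since $\mathrm{Gr}$ is an invariant of the $\G_{k+1}^\C$-orbit, all the limits $r((B,\Psi)g)$ lie in one $\G_{k+1}^\C$-orbit; and since they all satisfy $\mu=0$, the uniqueness part of the Hitchin--Kobayashi correspondence upgrades this to a single $\G_{k+1}$-orbit. That is the precise statement your argument needs, and convexity alone does not deliver it.

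Your fallback --- transporting the finite-dimensional Kempf--Ness uniqueness through the Kuranishi local model via Proposition~\ref{sec:kuran-local-models-local-completeness} --- is not available here: in the paper this lemma is an input to the proof of Theorem~\ref{sec:kuran-local-models-closedness-polystability} and hence to the construction of the local models, so arguing in that direction would be circular. Moreover, the local model only controls a neighborhood of a point with $\mu=0$, whereas the points $(B,\Psi)g_n$ of your sequence need not remain in any such neighborhood.
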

\begin{proof}
We may assume that $\mu(B_i,\Psi_i)=0$ for $i=1,2$. Let
$r\colon\B^{ss}_k\to\mu^{-1}(0)$ be the retraction (see \cite[Theorem
1.1]{Wilkin2006}) given by the Yang-Mills-Higgs flow. Suppose
there are sequences $(B_i^j,\Psi_i^j)\in(B,\Psi)\G^\C_{k+1}$ such that
$(B_i^j,\Psi_i^j)\xrightarrow{j\to\infty}(B_i,\Psi_i)$. By the
openness of $\B^{ss}_k$, each $(B_i^j,\Psi_i^j)$ is semistable if
$j\gg0$. By the continuity of $r$, we have
\begin{equation*}
  r(B_i^j,\Psi_i^j)\xrightarrow{j\to\infty}r(B_i,\Psi_i)=(B_i,\Psi_i)
\end{equation*}
By \cite[Theorem 1.4]{Wilkin2006}, we see that each
$r(B_i^j,\Psi_i^j)$ is the graded object of the Seshadri filtration
of $(B_i^j,\Psi_i^j)$. Since graded objects are determined by
$\G_{k+1}^\C$-orbits, we conclude that
\begin{equation*}
  r(B_1^j,\Psi_1^j)\sim_{\G_{k+1}^\C}\mathrm{Gr}(B,\Psi)\sim_{\G_{k+1}^\C}r(B_2^l,\Psi_2^l)
\end{equation*}
for each $j,l$ so that $r(B_1^j,\Psi_1^j)\sim_{\G_{k+1}}
r(B_2^l,\Psi_2^l)$. Since the $\G_{k+1}$-action is proper, $\G_{k+1}$-orbits are
closed. Letting $j\to\infty$, we see that $(B_1,\Psi_1)\in
r(B_2^l,\Psi_2^l)\G_{k+1}$. Now, letting $l\to\infty$, we see that
$(B_1,\Psi_1)\sim_{\G_{k+1}}(B_2,\Psi_2)$.
\end{proof}

\subsection{Open embeddings into the moduli space}

Let $\Z:=Z\cap B$ which is a closed complex subspace of $B$. Note that
$\Z$ is $K$-invariant but not $K^\C$-invariant. To fix this issue,
recall that every open ball around $0$ (in the $L^2$-norm) in the
in $\bH^1$ is $K$-invariant and orbit-convex (see \cite[Definition 1.2 and Lemma
1.14]{sjamaar1995holomorphic}). By \cite[\S3.3,
Proposition]{heinzner1991geometric}, $\Z K^\C$ is a closed complex
subspace of $BK^\C$, and $\Z$ is open in $\Z K^\C$. Recall the
standard moment map $\nu_0\colon\bH^1\to\bH^2(C_\mu)$ used in the
proof of
Theorem~\ref{sec:kuran-local-models-closedness-polystability}. Then,
by the analytic GIT developed in \cite{heinzner1994reduction} or
\cite[\S0]{heinzner1996kahlerian}, there is a categorical quotient
$\pi\colon \Z K^\C\to \Z K^\C\sslash K^\C$ in the category of reduced
complex spaces such that every fiber of $\pi$ contains a unique closed
$K^\C$-orbit, and the inclusion
$\nu_0^{-1}(0)\cap \Z K^\C\hookrightarrow \Z K^\C$ induces a
homeomorphism
\begin{equation*}
  (\nu_0^{-1}(0)\cap \Z K^\C)/K\xrightarrow{\sim} \Z K^\C\sslash K^\C
\end{equation*}
Moreover, as a topological space, $\Z K^\C\sslash K^\C$ is the quotient
space defined by the equivalence relation that $x\sim y$ if and only
if $\bar{xK^\C}\cap\bar{yK^\C}\neq\emptyset$.

A corollary of
Theorem~\ref{sec:kuran-local-models-closedness-polystability} is
that $\Z K^\C\sslash K^\C$ can be realized as a singular symplectic quotient
with respect to $\nu$ instead of $\nu_0$.
\begin{corollary}\label{sec:kuran-local-models-symplectic-quotient-nu-ZKC}
  The inclusion $j\colon\nu^{-1}(0)\cap\Z K^\C\hookrightarrow\Z K^\C$ induces
  a homeomorphism
  \begin{equation*}
    \bar{j}\colon(\nu^{-1}(0)\cap\Z K^\C)/K\xrightarrow{\sim}\Z K^\C\sslash K^\C
  \end{equation*}
  As a consequence, the perturbed Kuranishi map $\Theta$ induces
  well-defined continuous maps $\bar{\Theta}$ and $\varphi$ in the
  following commutative diagram
  \begin{equation*}
    \xymatrix
    {
      \Z K^\C\sslash K^\C\ar[r]^-\varphi &\B^{ps}_k/\G_{k+1}^\C\\
      (\nu^{-1}(0)\cap\Z K^\C)/K\ar[r]^-{\bar{\Theta}}\ar[u]^-\sim &(\mu^{-1}(0)\cap\B_k)/\G_{k+1}\ar[u]^-\sim
    }
  \end{equation*}
  More explicitly, $\varphi$ is given by the formula
  \begin{equation*}
    \varphi[x]=[r\theta(x)]\qquad x\in\Z
  \end{equation*}
  where $r\colon\B_k^{ss}\to\mu^{-1}(0)$ is the retraction defined by
  the Yang-Mills-Higgs flow.
\end{corollary}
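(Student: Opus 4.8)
The plan is to prove the corollary in two stages: first establish that $\bar{j}$ is a continuous bijection, and then upgrade this to a homeomorphism; the maps $\bar{\Theta}$ and $\varphi$ together with the commutativity of the diagram will then follow formally. Throughout I will use three inputs freely: Theorem~\ref{sec:kuran-local-models-closedness-polystability}, which for $x\in B\cap Z$ says that $xK^\C$ is closed in $\bH^1$ if and only if it meets $\nu^{-1}(0)$, together with the injectivity of $[x,g]\mapsto\Theta(x)g$; the Kempf-Ness theorem for the \emph{linear} moment map $\nu_0$, already built into the analytic-GIT homeomorphism $(\nu_0^{-1}(0)\cap\Z K^\C)/K\xrightarrow{\sim}\Z K^\C\sslash K^\C$, which I abbreviate $\bar{j}_0$; and the Hitchin-Kobayashi correspondence $(\mu^{-1}(0)\cap\B_k)/\G_{k+1}\xrightarrow{\sim}\B^{ps}_k/\G_{k+1}^\C$ together with Lemma~\ref{sec:kuran-local-models-unique-polystable-orbit}.

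For bijectivity, I first note that $\nu$ is a moment map and hence $K$-equivariant, so $\nu^{-1}(0)$ is $K$-invariant and $\bar{j}=\pi\circ j$ descends to a continuous map. Surjectivity follows because every fiber of $\pi$ contains a unique closed $K^\C$-orbit, which by orbit-convexity of $B$ has a representative $w\in\Z$; since $wK^\C$ is closed, Theorem~\ref{sec:kuran-local-models-closedness-polystability} produces a point of $wK^\C\cap\nu^{-1}(0)$ lying over the given fiber. For injectivity, suppose $x_1,x_2\in\nu^{-1}(0)\cap\Z K^\C$ (again represented in $\Z$) satisfy $\pi(x_1)=\pi(x_2)$. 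Each $x_iK^\C$ is closed by Theorem~\ref{sec:kuran-local-models-closedness-polystability}, and two closed orbits whose closures meet coincide, so $x_2=x_1g$ with $g\in K^\C$. Because $\theta$ is $K^\C$-equivariant (Proposition~\ref{sec:kuran-local-models-infi-slice-theorem}) and $\Theta=\theta e^{-i*\beta}$ differs from $\theta$ by a complex gauge transformation, the points $\Theta(x_1)$ and $\Theta(x_2)$ lie in the same $\G_{k+1}^\C$-orbit; as $\nu(x_i)=0$ forces $\Theta(x_i)\in\mu^{-1}(0)$, the Hitchin-Kobayashi correspondence upgrades this to $\Theta(x_1)=\Theta(x_2)h$ for some $h\in\G_{k+1}$, and the injectivity of $[x,g]\mapsto\Theta(x)g$ then yields $x_1\in x_2K$. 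Thus $\bar{j}$ is a continuous bijection.

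The hard part will be promoting $\bar{j}$ to a homeomorphism, and this is exactly where the failure of $\Theta^*\Omega_I$ to be K\"ahler bites: unlike in the $\nu_0$-picture, there is no gradient flow of $\|\nu\|^2$ tangent to the $K^\C$-orbits that would retract $\Z K^\C$ onto $\nu^{-1}(0)$, so the Heinzner-Loose argument cannot be copied verbatim. My plan is to transport the topology from the $\nu_0$-model by showing $\bar{j}$ is proper. Since $K$ is compact and $\bH^1$ is finite dimensional, a compact subset of $\Z K^\C\sslash K^\C$ lifts through $\bar{j}_0$ to a compact set of $\nu_0$-zeros, and I must verify that the matching $\nu$-zeros, which lie on the \emph{same} closed orbits, stay within a compact set; here orbit-convexity of $B$ and the properness of the $\G_{k+1}$-action keep the representatives controlled. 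A proper continuous bijection onto the locally compact Hausdorff space $\Z K^\C\sslash K^\C$ is closed and hence a homeomorphism. The boundedness of the matched $\nu$-zeros is the one genuinely technical point and is what I expect to demand the most care.

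Finally, with $\bar{j}$ in hand the remaining data are formal. The map $\bar{\Theta}$ is induced by $x\mapsto\Theta(x)$ restricted to $\nu^{-1}(0)$: since $\nu=\Theta^*\mu$ this lands in $\mu^{-1}(0)\cap\B_k$, it is $K$-equivariant into $\G_{k+1}$, and its well-definedness and injectivity on $K$-orbits are precisely the computation used above. The map $\varphi$ is defined by $\varphi[x]=[r\theta(x)]$ for a representative $x\in\Z$; it is well-defined because the unique closed orbit $wK^\C$ in the class of $[x]$ satisfies $\theta(w)\G_{k+1}^\C\subseteq\overline{\theta(x)\G_{k+1}^\C}$, so $r\theta(x)$ and $r\theta(w)$ are polystable representatives of the same orbit-closure and agree by Lemma~\ref{sec:kuran-local-models-unique-polystable-orbit}, while continuity comes from that of $r$ and $\theta$. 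Commutativity reduces to the observation that when $\nu(x)=0$ the orbit of $\theta(x)=\Theta(x)e^{i*\beta(x)}$ is polystable, hence closed, so $r\theta(x)$ lies in it and $r\theta(x)\sim_{\G_{k+1}^\C}\Theta(x)$; the two routes around the square therefore agree after applying the Hitchin-Kobayashi homeomorphism.
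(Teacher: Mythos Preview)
Your bijectivity argument for $\bar{j}$ is essentially the same as the paper's and is fine. The formal parts at the end (well-definedness of $\bar{\Theta}$, the formula for $\varphi$, commutativity) are also correct and track the paper's reasoning closely.

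The gap is in the homeomorphism step, and it is precisely the one you flag. Your plan is to show $\bar{j}$ is proper by lifting a compact set through $\bar{j}_0^{-1}$ to a compact set of $\nu_0$-zeros and then arguing that the ``matching $\nu$-zeros on the same closed orbits'' stay bounded. But orbit-convexity of $B$ and properness of the $\G_{k+1}$-action do not give you this: the $\nu$-zero on a given closed $K^\C$-orbit is only known to lie \emph{somewhere} in $U$, and $K^\C$-orbits are noncompact, so there is no a priori control on where it lands relative to the $\nu_0$-zero. What you actually need is the continuity of the assignment $\tilde{x}\mapsto x_\infty$, and nothing you cite produces it.

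The paper sidesteps properness entirely and constructs $\bar{j}^{-1}$ explicitly. For $xg\in\Z K^\C$ with $x\in\Z$, one first flows $x$ by the gradient of $\|\nu_0\|^2$ to $\tilde{x}\in\Z$ with closed orbit, and then the point $x_\infty$ with $\nu(x_\infty)=0$ is \emph{pinned down} by the equation
\[
  T^{-1}\bigl(r(\theta(\tilde{x}))\bigr)=(u_\infty,\beta_\infty,\theta(x_\infty)),
\]
where $r$ is the Yang--Mills--Higgs retraction and $T$ is the local diffeomorphism of Proposition~\ref{sec:kuran-local-models-local-completeness}. Continuity of $\bar{j}^{-1}$ then follows immediately from the continuity of $r$, $T^{-1}$, $\theta^{-1}$ and of the $\|\nu_0\|^2$-flow limit. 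This is exactly the mechanism buried in the proof of Theorem~\ref{sec:kuran-local-models-closedness-polystability}; you already use $r$ later to describe $\varphi$, so the ingredient is at hand---you just need to deploy it here rather than appeal to an abstract properness bound that isn't available.
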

\begin{proof}
  Clearly, $\bar{\Theta}$ is a well-defined continuous map. To define
  $\varphi$, it suffices to show that $\bar{j}$ is a
  homeomorphism. Therefore, we show that it has a continuous inverse
  and follow the notations and the setup in the proof of
  Theorem~\ref{sec:kuran-local-models-closedness-polystability}. Let
  $\pi\colon\Z K^\C\to\Z K^\C\sslash K^\C$ be the quotient map. If
  $xg\in\Z K^\C$ with $x\in\Z$, by using the gradient flow of
  $\|\nu_0\|^2$, we see that there is a closed $K^\C$-orbit
  $\tilde{x}K^\C\subset\bar{xK^\C}$ with $\tilde{x}\in\Z$. Then,
  Theorem~\ref{sec:kuran-local-models-closedness-polystability}
  implies that there exists
  \begin{equation*}
    x_\infty\in\nu^{-1}(0)\cap\tilde{x}K^\C\subset\nu^{-1}(0)\cap\bar{xK^\C}
  \end{equation*}
  Therefore, if $\pi(xg)=\pi(yh)$, then $\pi(x_\infty)=\pi(y_\infty)$ so
  that
  \begin{equation*}
    \bar{\Theta(x_\infty)\G_{k+1}^\C}\cap\bar{\Theta(y_\infty)\G_{k+1}^\C}\neq\emptyset
  \end{equation*}
  If we can show that $x_\infty\sim_Ky_\infty$, then the map
  \begin{equation*}
    \bar{j}^{-1}\colon\Z K^\C\sslash K^\C\to(\nu^{-1}(0)\cap\Z K^\C)/K\qquad[xg]\mapsto[x_\infty]
  \end{equation*}
  is well-defined. Now, $x_\infty\sim_K y_\infty$ follows from the
  following Lemma.
  \begin{lemma}
    If $(A_i,\Phi_i)$ $(i=1,2)$ are Higgs bundles such that
    $\mu(A_i,\Phi_i)=0$ and
    $\bar{(A_1,\Phi_1)\G_{k+1}^\C}\cap\bar{(A_2,\Phi_2)\G_{k+1}^\C}\neq\emptyset$,
    then $(A_1,\Phi_1)\sim_{\G_{k+1}}(A_2,\Phi_2)$.
  \end{lemma}
  \begin{proof}
    Let $(B,\Psi)$ be a Higgs bundle in the intersection of the
    closures. Hence, there is a sequence
    $(A_i^j,\Phi_i^j)\in(A_i,\Phi_i)\G_{k+1}^\C$ converging to $(B,\Psi)$. The
    continuity of $r$ implies that
    $r(A_i^j,\Phi_i^j)\xrightarrow{j\to\infty} r(B,\Psi)$. By
    \cite[Theorem 1.4]{Wilkin2006},
    \begin{equation*}
      r(A_i^j,\Phi_i^j)\sim_{\G_{k+1}^\C}Gr(A_i,\Phi_i)=(A_i,\Phi_i)
    \end{equation*}
    so that $r(A_i^j,\Phi_i^j)\sim_{\G_{k+1}}(A_i,\Phi_i)$. Hence, there
    is a sequence of $g_i^j\in\G$ such that
    $(A_i,\Phi_i)g^j_i\xrightarrow{j\to\infty} r(B,\Psi)$. Since the
    $\G_{k+1}$-action is proper, by passing to a subsequence, we may
    assume that $g_i^j\xrightarrow{j\to\infty}g_i$ for some
    $g_i\in\G_{k+1}$. Hence, $(A_i,\Phi_i)g_i=r(B,\Psi)$.
  \end{proof}
  Continuing with the proof of
  Corollary~\ref{sec:kuran-local-models-symplectic-quotient-nu-ZKC},
  we show that $\bar{j}^{-1}$ is continuous. Recall that $x_\infty$ is
  determined by the equation
  $\theta(x_\infty)e^{-i*\beta_\infty}e^{u_\infty}=r(\theta(\tilde{x}))$. By
  the continuity of $r$, $T^{-1}$ and $\theta^{-1}$, we see that the
  map $\Z\ni \tilde{x}\mapsto x_\infty$ is continuous. Moreover,
  $\Z\ni x\mapsto \tilde{x}$ is also continuous, which is a general
  property of the gradient flow of $\|\nu_0\|^2$. Since $\Z$ is open
  in $\Z K^\C$, we conclude that $\bar{j}^{-1}$ is continuous.

  It remains to show that $\bar{j}^{-1}$ is indeed the inverse of
  $\bar{j}$. If $xg\in\nu^{-1}(0)\cap \Z K^\C$ with $x\in\Z$, then
  $xK^\C$ is closed in $\bH^1$
  (Theorem~\ref{sec:kuran-local-models-closedness-polystability}). Since
  $\bar{j}^{-1}$ is well-defined, we see that
  \begin{equation*}
    (xg)_\infty\sim_Kx_\infty\sim_{K^\C}\tilde{x}\sim_{K^\C}x\sim_{K^\C}xg
  \end{equation*}
  Then, $\nu((xg)_\infty)=\nu(xg)=0$ implies that
  $(xg)_\infty\sim_Kxg$. Conversely, if $xg\in\Z K^\C$ with $x\in\Z$,
  then $x_\infty\in\bar{xK^\C}$ so that $\pi(xg)=\pi(x_\infty)$.

  Finally, to obtain a formula for $\varphi$, note that
  \begin{equation*}
    \Theta(x_\infty)\in\bar{\Theta(x)\G_{k+1}^\C}=\bar{\theta(x)\G_{k+1}^\C}
  \end{equation*}
  Moreover, $r(\theta(x))\in\bar{\theta(x)\G_{k+1}^\C}$. Hence, by
  Lemma~\ref{sec:kuran-local-models-unique-polystable-orbit},
  $\Theta(x_\infty)\sim_{\G_{k+1}^\C}r(\theta(x))$.
\end{proof}

The next result shows that $\Z K^\C\sslash K^\C$ is a local model for
the quotient $\M_k=\B^{ps}_k/\G_{k+1}^\C$. Strictly speaking, $\M_k$
is not the moduli space $\M$. That said, there is a natural map
$\M\to\M_k$. Note that \cite[Lemma 14.8]{Atiyah1983} and the elliptic
regularity for $\bar{\partial}_A$ with $A\in\A$ imply that every point
in $\M_k$ has a $C^\infty$ representative. As a consequence, the
natural map $\M\to\M_k$ is surjective. Its injectivity follows from
\cite[Lemma 14.9]{Atiyah1983}. Later, as a consequence of
Theorem~\ref{sec:kuran-local-models-local-charts}, we will show that
$\M\to\M_k$ is a homeomorphism, which justifies our use of Sobolev
completions.
\begin{theorem}\label{sec:kuran-local-models-local-charts}
  If $B$ is sufficiently small,
  $\varphi\colon\Z K^\C\sslash K^\C\to\M_k$ is an open embedding.
\end{theorem}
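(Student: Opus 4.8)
The strategy is to verify that $\varphi$ is continuous, injective, and open, from which it follows that it is a homeomorphism onto an open subset of $\M_k$. Continuity is already contained in Corollary~\ref{sec:kuran-local-models-symplectic-quotient-nu-ZKC}. Moreover, since the two vertical maps in the commutative diagram of that corollary are homeomorphisms, $\varphi$ is an open embedding if and only if the induced map
\begin{equation*}
  \bar\Theta\colon(\nu^{-1}(0)\cap\Z K^\C)/K\to(\mu^{-1}(0)\cap\B_k)/\G_{k+1}
\end{equation*}
is one. I would therefore work with $\bar\Theta$ together with the explicit formula $\varphi[x]=[r\theta(x)]$ throughout, shrinking $B$ (hence $U$ and $\Z$) wherever a smallness hypothesis is needed.

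For injectivity, suppose $\varphi[x]=\varphi[y]$ with $x,y\in\Z$. Using the homeomorphism $\bar j$ of the corollary I may choose representatives $x_\infty,y_\infty\in\nu^{-1}(0)\cap\Z$ whose $K^\C$-orbits are closed, so that $\Theta(x_\infty),\Theta(y_\infty)\in\mu^{-1}(0)$ and $\Theta(x_\infty)\sim_{\G_{k+1}^\C}\Theta(y_\infty)$. By the Hitchin-Kobayashi correspondence these two polystable representatives are then related by $\G_{k+1}$, and the injectivity of the slice map $[x,g]\mapsto\Theta(x)g$ established in Theorem~\ref{sec:kuran-local-models-closedness-polystability} forces $x_\infty\sim_K y_\infty$. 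Hence $[x]=[y]$ in $\Z K^\C\sslash K^\C$.

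For openness of the image, I would invoke local completeness. By Proposition~\ref{sec:kuran-local-models-local-completeness} there is an open neighborhood $W$ of $(A,\Phi)$ in $\B^{ss}_k$ together with the homeomorphism $T\colon N_1\times N_2\times(V\cap\B^{ss}_k)\to W\cap\B^{ss}_k$ and $\theta\colon\Z\xrightarrow{\sim}V\cap\B^{ss}_k$, so that every $\G_{k+1}^\C$-orbit meeting $W$ also meets $\theta(\Z)$. Consequently a polystable orbit passes through $W$ precisely when it equals $\theta(x)\G_{k+1}^\C$ for some $x\in\Z$ with $xK^\C$ closed, by Theorem~\ref{sec:kuran-local-models-closedness-polystability}. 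It follows that the image of $\varphi$ is exactly $q(W\cap\B^{ps}_k)$, where $q\colon\B^{ps}_k\to\M_k$ is the quotient map; since $q$ is open and $W$ is open, this set is an open neighborhood of $[A,\Phi]$.

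The remaining and most delicate point is to exhibit a continuous inverse $\psi$ on the image, and I expect this to be the main obstacle. On $W\cap\B^{ps}_k$ I would define the assignment $(B,\Psi)\mapsto\pi(\tilde x)$, where $\theta(x')$ is the third component of $T^{-1}(B,\Psi)$, where $\tilde x\in\nu_0^{-1}(0)\cap\Z$ is the limit of the gradient flow of $\|\nu_0\|^2$ starting at $x'$ (so that $\tilde x K^\C$ is closed by the Kempf-Ness theorem), and where $\pi\colon\Z K^\C\to\Z K^\C\sslash K^\C$ is the categorical quotient. This assignment is continuous, by continuity of $T^{-1}$, $\theta^{-1}$, the Yang-Mills-Higgs retraction $r$, and the flow map $x'\mapsto\tilde x$; and it is $\G_{k+1}^\C$-invariant, so it descends to a continuous map $\psi$ on the open set $q(W\cap\B^{ps}_k)$. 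That $\psi$ is well defined and genuinely inverts $\varphi$ rests once more on the injectivity of the slice map together with the uniqueness of the polystable orbit in a $\G_{k+1}^\C$-orbit closure (Lemma~\ref{sec:kuran-local-models-unique-polystable-orbit}), exactly as in the construction of $\bar j^{-1}$ in Corollary~\ref{sec:kuran-local-models-symplectic-quotient-nu-ZKC}. The genuine difficulty is that the perturbation $\beta$ renders $\Theta$ non-holomorphic, so one cannot appeal to a holomorphic slice theorem and must instead control these maps by hand and check that the orbit-closure topology on $\Z K^\C\sslash K^\C$ matches the subspace topology inherited from $\M_k$.
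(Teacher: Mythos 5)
Your overall strategy (injectivity from the injectivity of $[x,g]\mapsto\Theta(x)g$ combined with the Hitchin--Kobayashi correspondence, surjectivity onto an open set from local completeness, and a continuous inverse built from $T^{-1}$, $\theta^{-1}$, $r$ and the gradient flow of $\|\nu_0\|^2$) is the same as the paper's, and the injectivity and continuous-inverse parts are essentially correct. The gap is in the openness step. Your claim that the image of $\varphi$ is exactly $q(W\cap\B^{ps}_k)$ does not hold as stated: local completeness only gives that every $\G_{k+1}^\C$-orbit through $W$ meets $\theta(Z)$, not $\theta(\Z)$ with $\Z=Z\cap B$, and in the other direction the Yang--Mills--Higgs flow starting at $\theta(x)$ for $x\in\Z$ is only known to stay and converge in $W$ when $\theta(x)$ lies in the smaller neighborhood $W'$ provided by \cite[Proposition 3.7]{Wilkin2006}. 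What one actually obtains is a sandwich $\Pi(W'\cap\B^{ps}_k)\subset\im\varphi\subset\Pi(W\cap\B^{ps}_k)$, so the image is squeezed between two open sets but is not itself visibly open; equivalently, the argument only shows that $\varphi$ restricts to a homeomorphism from the open subset $\tilde{O}=\varphi^{-1}(\Pi(W'\cap\B^{ps}_k))$ of $\Z K^\C\sslash K^\C$ onto the open set $\Pi(W'\cap\B^{ps}_k)$.

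To get the theorem as stated you must then shrink the domain so that it fits inside $\tilde{O}$, and this is the genuinely delicate point your proposal omits: replacing $B$ by a smaller ball $B'$ changes the quotient, and $(\Z\cap B')K^\C\sslash K^\C$ is an open subset of $\Z K^\C\sslash K^\C$ only if $(\Z\cap B')K^\C$ is $\pi$-saturated --- an arbitrary $K^\C$-invariant open subset of $\Z K^\C$ need not descend to an open subset of the categorical quotient, precisely because the quotient topology is the orbit-closure topology. The paper proves this saturation using the $\nu_0$-convexity of open balls (\cite[Lemma 1.14]{sjamaar1995holomorphic}), the identity $\Z K^\C\cap B'K^\C=(\Z\cap B')K^\C$ from \cite[(3.1), Lemma]{heinzner1994reduction}, and the uniqueness of the closed $K^\C$-orbit in each fiber of $\pi$. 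Without this step you have only shown that some open neighborhood of $[0]$ in $\Z K^\C\sslash K^\C$ embeds openly into $\M_k$, not that $\varphi$ itself is an open embedding for $B$ sufficiently small.
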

\begin{proof}
  We will follow the notations and the setup in the proof of
  Theorem~\ref{sec:kuran-local-models-closedness-polystability}. Since
  $\bar{\Theta}$ is injective, $\varphi$ is injective. Let
  $\Pi\colon\B^{ps}_k\to\M_k$ be the quotient map, and consider the open
  set $O=\Pi(W'\cap\B^{ps}_k)$. If $(B,\Psi)\in W'\cap\B^{ps}_k$, then
  $(B,\Psi)=\theta(x)e^{-i*\beta}e^u$ for some $x\in\Z$. We claim that
  $\varphi[x]=[B,\Psi]$. By the construction of $\varphi$ in the proof
  of
  Corollary~\ref{sec:kuran-local-models-symplectic-quotient-nu-ZKC},
  we see that $\varphi[x]=[\Theta(x_\infty)]$ for some
  $x_\infty\in\nu^{-1}(0)\cap\Z K^\C\cap\bar{xK^\C}$ so that
  \begin{equation*}
    \Theta(x_\infty)\in\bar{\theta(x)\G_{k+1}^\C}=\bar{(B,\Psi)\G_{k+1}^\C}
  \end{equation*}
  By Lemma~\ref{sec:kuran-local-models-unique-polystable-orbit},
  we have $\Theta(x_\infty)\sim_{\G_{k+1}^\C}(B,\Psi)$. As a consequence,
  the open set $O$ is contained in the image of $\varphi$. Hence, we
  obtain a bijective continuous map $\varphi\colon\tilde{O}\to O$,
  where $\tilde{O}=\varphi^{-1}(O)$.

  To show that $\varphi|_{\tilde{O}}$ is a homeomorphism, we will show
  that its inverse is continuous. From the previous paragraph, we see
  that its inverse should be $[B,\Psi]\mapsto[x]$. The continuity
  follows from the continuity of $\theta^{-1}$ and
  $T^{-1}$. Therefore, it remains to prove that it is well-defined. If
  $(B',\Psi')\in W'\cap\B^{ps}_k$ lies in the $\G_{k+1}^\C$-orbit of
  $(B,\Psi)$, then
  \begin{equation*}
    \Theta(x_\infty)\sim_{\G_{k+1}^\C}(B,\Psi)\sim_{\G_{k+1}^\C}(B',\Psi')\sim_{\G_{k+1}^\C}\Theta(x_\infty')
  \end{equation*}
  so that
  \begin{equation*}
    \bar{xK^\C}\ni x_\infty\sim_K x_\infty'\in\bar{x'K^\C}
  \end{equation*}
  Hence, $\bar{xK^\C}\cap\bar{x'K^\C}\neq\emptyset$.

  Finally, we show that if $B$ is sufficiently small, then $\varphi$
  is an open embedding. Write $\pi^{-1}(\tilde{O})=\Z K^\C\cap Q$ for
  some open set $Q$ in $\bH^1$, where
  $\pi\colon \Z K^\C\to \Z K^\C\sslash K^\C$ is the quotient
  map. Since $0\in Q$, choose some open ball $B'\subset Q\cap B$
  around $0$. By \cite[Lemma 1.14]{sjamaar1995holomorphic}, we know
  that $B$ and $B'$ are $\nu_0$-convex (see \cite[(2.6),
  Definition]{heinzner1994reduction}). Hence, by definition of $\Z$,
  $\Z$ is also $\nu_0$-convex. Hence, by \cite[(3.1),
  Lemma]{heinzner1994reduction}, we see that
  $\Z K^\C\cap B'K^\C=(\Z\cap B')K^\C$. Then, we claim that
  $(\Z\cap B')K^\C\subset\pi^{-1}(\tilde{O})$. In fact, if
  $xg\in(\Z\cap B')K^\C$ with $x\in\Z\cap B$, then $x\in ZK^\C\cap
  Q$. Since $ZK^\C\cap Q$ is $K^\C$-invariant, $xg\in ZK^\C\cap
  Q$. Finally, we claim that $(\Z\cap B')K^\C$ is also $\pi$-saturated
  so that $(\Z\cap B')K^\C\sslash K^\C$ is an open neighborhood of $[0]$ in
  $\Z K^\C\sslash K^\C$. Therefore, if $B$ is shrunk to $B'$, and $\Z$
  is shrunk to $\Z\cap B'$, we see that $\varphi$ is an open
  embedding.

  Suppose $\pi(xg)=\pi(yh)$ for some $x\in\Z$ and $y\in\Z\cap B'$. We
  want to show that $xg\in(\Z\cap B')K^\C$. By using the gradient flow
  of $\|\nu_0\|^2$, we can find a closed orbit
  $y'K^\C\subset\bar{yK^\C}$ with $y'\in\Z\cap B'$. Since every fiber
  of $\pi$ contains a unique closed orbit,
  $y'K^\C\subset\bar{xK^\C}$. Since $B'$ is open,
  $xK^\C\cap B'\neq\emptyset$. Hence,
  $x\in B'K^\C\cap\Z K^\C=(\Z\cap B')K^\C$.
\end{proof}
To show that $\M\to\M_k$ is a homeomorphism, we need the
following lemma.
\begin{lemma}\label{sec:kuran-local-models-regularity-1}
  Elements in $\B_k\cap[(A,\Phi)+\ker (D'')^*]$ are of class $C^\infty$.
\end{lemma}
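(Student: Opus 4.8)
The plan is to turn the two defining conditions into a single elliptic equation for the perturbation and then bootstrap. First I would write a general element of $\B_k\cap[(A,\Phi)+\ker (D'')^*]$ as $(A+\alpha,\Phi+\eta)$ with $(\alpha,\eta)\in(\Omega^{0,1}(\g_E^\C)\oplus\Omega^{1,0}(\g_E^\C))_k$. By the definition of the slice it satisfies the gauge-fixing condition $(D'')^*(\alpha,\eta)=0$, and by the definition of $\B=\mu_\C^{-1}(0)$ it satisfies the Higgs bundle condition $\mu_\C(A+\alpha,\Phi+\eta)=0$. A direct computation, exactly as in the proof of Lemma~\ref{sec:kuran-local-models-Fmap-properties}, gives the identity $\mu_\C(A+\alpha,\Phi+\eta)=D''(\alpha,\eta)+[\alpha'',\eta]$, so the Higgs bundle condition reads $D''(\alpha,\eta)=-[\alpha'',\eta]$.

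Next I would apply the Hodge Laplacian $\Delta''=D''(D'')^*+(D'')^*D''$ of the elliptic complex $C_{\mu_\C}$. Since $(D'')^*(\alpha,\eta)=0$, the two equations combine into the single equation
\begin{equation*}
  \Delta''(\alpha,\eta)=(D'')^*D''(\alpha,\eta)=-(D'')^*[\alpha'',\eta].
\end{equation*}
Here $\Delta''$ is a second-order elliptic operator whose coefficients are of class $C^\infty$, because the base point $(A,\Phi)$ is smooth, and the bracket is a smooth bilinear bundle map.

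The heart of the argument is then a standard bootstrap. Since $\dim_\R X=2$ and $k>1$, the Sobolev space $L_k^2$ is a Banach algebra, and more generally multiplication maps $L_m^2\times L_m^2\to L_m^2$ for every $m\geq k$. Assume inductively that $(\alpha,\eta)\in L_m^2$ for some $m\geq k$. Then $[\alpha'',\eta]\in L_m^2$, and since $(D'')^*$ is first order the right-hand side $-(D'')^*[\alpha'',\eta]$ lies in $L_{m-1}^2$. Elliptic regularity for the second-order operator $\Delta''$ then upgrades $(\alpha,\eta)$ to $L_{m+1}^2$. Iterating, $(\alpha,\eta)\in\bigcap_m L_m^2$, so $(\alpha,\eta)$ is of class $C^\infty$ by the Sobolev embedding theorem, and therefore so is $(A+\alpha,\Phi+\eta)$.

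The one place where the argument could fail is the balance of derivatives in the nonlinear term: applying $(D'')^*$ to the quadratic term costs one derivative, and a priori this could cancel the regularity gained from inverting $\Delta''$. What saves the argument is that $\Delta''$ is second order while $(D'')^*$ is only first order, so each step yields a net gain of exactly one derivative; the Sobolev multiplication estimate, valid precisely because $k>1=\dim_\R X/2$, is what guarantees that the quadratic term does not already lose regularity before $(D'')^*$ is applied.
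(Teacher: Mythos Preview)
Your argument is correct and is essentially identical to the paper's own proof: both combine the slice condition $(D'')^*(\alpha,\eta)=0$ and the Higgs-bundle condition $D''(\alpha,\eta)=-[\alpha'',\eta]$ into the single elliptic equation $\Delta''(\alpha,\eta)=-(D'')^*[\alpha'',\eta]$ and then bootstrap using the Sobolev multiplication theorem (valid since $k>1$) together with elliptic regularity for $\Delta''$.
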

\begin{proof}
  Suppose $(D'')^*(\alpha'',\eta)=0$ and
  $(\bar{\partial}_A+\alpha'')(\Phi+\eta)=0$, where $\alpha''$ is the
  $(0,1)$-component of $\alpha$. The second equation is also
  equivalent to $D''(\alpha'',\eta)+[\alpha'',\eta]=0$. Hence,
  $\Delta(\alpha,\eta)=-(D'')^*[\alpha'',\eta]$ where
  $\Delta=D''(D'')^*+(D'')^*D''$ is the Laplacian defined in
  $C_{\mu_\C}$. Since $k>1$, the Sobolev multiplication theorem (see
  \cite[Theorem 4.4.1]{friedman1998gauge}) implies that
  $[\alpha'',\eta]$ is in $L_k^2$ and hence $(D'')^*[\alpha'',\eta]$
  is in $L_{k-1}^2$. By the elliptic regularity, $(\alpha'',\eta)$ is
  hence in $L_{k+1}^2$. By induction, $(\alpha'',\eta)$ is in
  $C^\infty$.
\end{proof}
\begin{lemma}\label{sec:kuran-local-models-regularity-2}
  The map $\varphi$ in
  Corollary~\ref{sec:kuran-local-models-symplectic-quotient-nu-ZKC}
  factors through the natural map $\M\to\M_k$.
\end{lemma}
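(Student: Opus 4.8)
The plan is to produce a continuous map $\tilde{\varphi}\colon\Z K^\C\sslash K^\C\to\M$ with $\iota\circ\tilde{\varphi}=\varphi$, where $\iota\colon\M\to\M_k$ denotes the natural map. Since $\iota$ is a bijection (its surjectivity and injectivity having been recorded via \cite[Lemma 14.8 and Lemma 14.9]{Atiyah1983}), the factorization is forced to be $\tilde{\varphi}=\iota^{-1}\circ\varphi$; the substance of the lemma is therefore twofold: (i) that this is well defined, i.e. that each class $\varphi[x]$ actually lies in the image of $\iota$, and (ii) that the resulting map is continuous for the $C^\infty$-topology carried by $\M$. Part (ii) is the real point, since it is exactly what will upgrade the continuous bijection $\iota$ to a homeomorphism chart-by-chart.

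First I would settle well-definedness through regularity. Recall from Corollary~\ref{sec:kuran-local-models-symplectic-quotient-nu-ZKC} that $\varphi[x]=[r\theta(x)]$ for $x\in\Z$, where $r$ is the Yang--Mills--Higgs retraction. For $x\in\Z$ the point $\theta(x)$ lies in $\B_k\cap[(A,\Phi)+\ker(D'')^*]$, so Lemma~\ref{sec:kuran-local-models-regularity-1} shows $\theta(x)$ is of class $C^\infty$. By \cite[Theorem 1.4]{Wilkin2006} the limit $r\theta(x)$ is the graded object $\mathrm{Gr}(\theta(x))$ of the Seshadri filtration of $\theta(x)$; since $\theta(x)$ is a smooth holomorphic Higgs bundle its Seshadri filtration is by holomorphic subbundles, so $\mathrm{Gr}(\theta(x))$ is a smooth polystable Higgs bundle, and as it lies in $\mu^{-1}(0)$ it solves Hitchin's equation and is hence $C^\infty$ by elliptic regularity. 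Thus $\varphi[x]$ admits a $C^\infty$ representative and lies in the image of $\iota$, so $\tilde{\varphi}[x]:=[r\theta(x)]\in\M$ is well defined; commutativity $\iota\circ\tilde{\varphi}=\varphi$ and well-definedness on the quotient $\Z K^\C\sslash K^\C$ then follow from the injectivity of $\iota$ together with the fact, already established in Corollary~\ref{sec:kuran-local-models-symplectic-quotient-nu-ZKC}, that $\varphi$ itself is well defined there.

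For continuity I would argue by reducing to the quotient map and then upgrading the convergence supplied by the $L^2_k$-theory to $C^\infty$. Since $\Z$ surjects onto $\Z K^\C\sslash K^\C$ and the latter carries the quotient topology, it suffices to check that $\Z\to\M$, $x\mapsto[r\theta(x)]$, is continuous. Given $x_n\to x$ in $\Z$ one has $\theta(x_n)\to\theta(x)$, and by the continuity of the retraction in \cite[Theorem 1.1]{Wilkin2006} also $r\theta(x_n)\to r\theta(x)$ in the $L_k^2$-norm. The task is to promote this to $C^\infty$-convergence of suitable gauge representatives: each $r\theta(x_n)$ and $r\theta(x)$ solves the elliptic equation $\mu=0$, so after placing them in a common Coulomb (Uhlenbeck) gauge relative to the smooth limit, Hitchin's equation becomes elliptic in the gauge variables and a standard bootstrap turns $L_k^2$-convergence into $L^2_{k+1}$-, then $L^2_{k+2}$-, and hence $C^\infty$-convergence. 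This yields $\tilde{\varphi}[x_n]\to\tilde{\varphi}[x]$ in the $C^\infty$-topology.

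The main obstacle is precisely this last upgrade: the retraction is known to be continuous only in the $L^2_k$-norm, whereas $\M$ is topologized by the $C^\infty$-topology, so the crux is the elliptic bootstrapping for solutions of Hitchin's equation along the sequence, carried out after a uniform choice of gauge. Everything else—the identification $r\theta(x)=\mathrm{Gr}(\theta(x))$, the smoothness of $\theta(x)$ from Lemma~\ref{sec:kuran-local-models-regularity-1}, and the formal factorization through the bijection $\iota$—is either immediate from the cited results or a direct consequence of the injectivity of $\iota$.
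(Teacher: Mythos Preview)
Your proposal is essentially correct but takes a longer route than the paper. The paper's proof is three lines: since $\theta$ lands in $\B_k\cap[(A,\Phi)+\ker(D'')^*]$, the very bootstrap that proves Lemma~\ref{sec:kuran-local-models-regularity-1} (namely $\Delta(\alpha,\eta)=-(D'')^*[\alpha'',\eta]$ together with the Sobolev multiplication theorem and elliptic regularity) shows not only that each $\theta(x)$ is smooth but that $\theta\colon Z\to\B^{ss}$ is continuous for the $C^\infty$-topology on the target. One then composes with the $C^\infty$-continuous retraction $r\colon\B^{ss}\to\mu^{-1}(0)$ and the quotient $\mu^{-1}(0)\to\M$ to get continuity of $x\mapsto[r\theta(x)]\in\M$; well-definedness on $\Z K^\C\sslash K^\C$ follows, as you say, from the injectivity of $\iota$ and the fact that $\varphi$ is already well defined.

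By contrast, you stay in $L_k^2$ until the very end and only then bootstrap, which forces you to gauge-fix \`a la Uhlenbeck along the sequence $r\theta(x_n)$ and invoke ellipticity of Hitchin's equation in Coulomb gauge. That works, but it is heavier machinery for the same conclusion: the paper's observation is that the regularity upgrade is more cheaply performed on $\theta$ itself (where the equation is the single scalar bootstrap of Lemma~\ref{sec:kuran-local-models-regularity-1}) than on the limits $r\theta(x_n)$ (where one must first fix a gauge). Your well-definedness argument via $\mathrm{Gr}(\theta(x))$ is also slightly indirect---Wilkin's theorem gives $r\theta(x)\sim_{\G^\C}\mathrm{Gr}(\theta(x))$, not equality---though it still suffices to place $[r\theta(x)]$ in the image of $\iota$; the paper avoids this by appealing directly to the $C^\infty$-retraction applied to the smooth point $\theta(x)$.
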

\begin{proof}
  Recall that the formula for $\varphi$ is given by
  $\varphi[x]=[r\theta(x)]$ where $x\in\Z$. By
  Lemma~\ref{sec:kuran-local-models-regularity-1}, $\theta$ restricts
  to a continuous map $Z\to\B^{ss}\cap((A,\Phi)+\ker (D'')^*)$. Since
  $r\colon\B^{ss}\to\mu^{-1}(0)$ is continuous,
  $\Z\ni x\mapsto[r\theta(x)]\in\M$ is continuous. Finally,
  \cite[Lemma 14.9]{Atiyah1983} and the fact that $\varphi$ is
  well-defined imply that $\varphi$ factors through $\M\to\M_k$.
\end{proof}
\begin{corollary}\label{sec:kuran-local-models-Cinfty-Sobolev-homeo}
  The natural map $\M\to\M_k$ is a homeomorphism. Therefore, the map
  $\varphi\colon\Z K^\C\sslash K^\C\to\M$ is an open embedding.
\end{corollary}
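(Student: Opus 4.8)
The plan is to first show that the natural map, which I write $c\colon\M\to\M_k$, is a homeomorphism, and then deduce the statement about $\varphi$ purely formally. Recall from the discussion preceding Theorem~\ref{sec:kuran-local-models-local-charts} that $c$ is already a continuous bijection: surjectivity comes from \cite[Lemma 14.8]{Atiyah1983} together with elliptic regularity, injectivity from \cite[Lemma 14.9]{Atiyah1983}, and continuity is immediate since the $C^\infty$-topology on $\B^{ps}$ refines the $L^2_k$-topology. So the only remaining task is to prove that $c^{-1}$ is continuous, and for this it suffices to exhibit, around each point of $\M_k$, a continuous local section of $c$.

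First I would fix a point $p\in\M$. By the Hitchin-Kobayashi correspondence one may choose a representative $(A,\Phi)$ with $\mu(A,\Phi)=0$ and build the Kuranishi local model there. By Theorem~\ref{sec:kuran-local-models-local-charts}, after shrinking $B$ the map $\varphi\colon\Z K^\C\sslash K^\C\to\M_k$ is an open embedding; let $\tilde{O}\ni[0]$ be an open set carried homeomorphically by $\varphi$ onto an open neighborhood $O$ of $c(p)$ in $\M_k$. Here I use that $\varphi[0]=[r\theta(0)]=[(A,\Phi)]=c(p)$, since $\theta(0)=(A,\Phi)$ is a fixed point of the Yang-Mills-Higgs flow; as $p$ ranges over $\M$ and $c$ is surjective, these sets $O$ cover $\M_k$. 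By Lemma~\ref{sec:kuran-local-models-regularity-2}, $\varphi$ factors as $\varphi=c\circ\tilde{\varphi}$ through a continuous lift $\tilde{\varphi}\colon\Z K^\C\sslash K^\C\to\M$, explicitly $\tilde{\varphi}[x]=[r\theta(x)]$, which indeed lands in $\M$ because $r\theta(x)$ solves Hitchin's equation and is therefore of class $C^\infty$.

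The key step is then to set $s:=\tilde{\varphi}\circ(\varphi|_{\tilde{O}})^{-1}\colon O\to\M$. This is continuous as a composite of continuous maps, and it satisfies $c\circ s=(c\circ\tilde{\varphi})\circ(\varphi|_{\tilde{O}})^{-1}=\varphi\circ(\varphi|_{\tilde{O}})^{-1}=\mathrm{id}_O$. Because $c$ is injective, the identity $c\circ s=\mathrm{id}_O$ forces $s=c^{-1}|_O$, so $c^{-1}$ is continuous on the neighborhood $O$ of $c(p)$. Since such neighborhoods cover $\M_k$, the inverse $c^{-1}$ is globally continuous and $c$ is a homeomorphism.

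Finally, the assertion about $\varphi$ is immediate: Theorem~\ref{sec:kuran-local-models-local-charts} gives that $\varphi\colon\Z K^\C\sslash K^\C\to\M_k$ is an open embedding, and since $c$ is now a homeomorphism, the induced map into $\M$, namely $\tilde{\varphi}=c^{-1}\circ\varphi$, is a composite of an open embedding with a homeomorphism and hence again an open embedding. I do not expect a serious obstacle in this argument, as all the genuine content has been front-loaded into Theorem~\ref{sec:kuran-local-models-local-charts} and Lemma~\ref{sec:kuran-local-models-regularity-2}; the only point requiring care is the bookkeeping of which class bracket lives in $\M$ versus $\M_k$, and in particular verifying that the section $s$ really equals $c^{-1}$ rather than being merely a one-sided inverse, which is exactly where injectivity of $c$ from \cite[Lemma 14.9]{Atiyah1983} is indispensable.
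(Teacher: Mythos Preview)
Your proof is correct and follows essentially the same approach as the paper: both arguments combine Theorem~\ref{sec:kuran-local-models-local-charts} with Lemma~\ref{sec:kuran-local-models-regularity-2} to show that the continuous bijection $c$ has a continuous local inverse (equivalently, is locally open). The paper compresses this into a single sentence (``$\M\to\M_k$ is locally an open map and hence open''), while you spell out explicitly the local section $s=\tilde\varphi\circ(\varphi|_{\tilde O})^{-1}$ and use injectivity of $c$ to identify it with $c^{-1}|_O$; the content is the same.
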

\begin{proof}
  By Lemma~\ref{sec:kuran-local-models-regularity-2} and
  Theorem~\ref{sec:kuran-local-models-local-charts}, $\M\to\M_k$ is
  locally an open map and hence open.
\end{proof}

\section{Gluing local models}\label{sec:gluing-local-models}
For the rest of the paper, we will drop the subscripts that indicate
Sobolev completions for notational convenience. By
Lemma~\ref{sec:kuran-local-models-regularity-1},
\ref{sec:kuran-local-models-regularity-2} and
Corollary~\ref{sec:kuran-local-models-Cinfty-Sobolev-homeo}, this
should not cause any confusion. The main result in this section is the
following, which is part of
Theorem~\ref{sec:introduction-is-a-complex-space}. The normality of
$\M$ will be proved in Lemma~\ref{sec:holomorphicity-normality}.
\begin{theorem}\label{sec:gluing-local-models-complex-space-structure}
  The moduli space $\M$ is a complex space locally biholomorphic to a
  Kuranishi local model $\Z K^\C\sslash K^\C$.
\end{theorem}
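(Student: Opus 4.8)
The plan is to assemble the complex space structure on $\M$ from the local charts already constructed, checking that they are compatible on overlaps. By Corollary~\ref{sec:kuran-local-models-Cinfty-Sobolev-homeo}, each point $[A,\Phi]\in\M$ with $\mu(A,\Phi)=0$ has an open neighborhood of the form $\varphi\colon\Z K^\C\sslash K^\C\xrightarrow{\sim}O\subset\M$, where the source is a reduced complex space by the analytic GIT of Heinzner-Loose. Since every point of $\B^{ps}$ can be moved into $\mu^{-1}(0)$ by the Hitchin-Kobayashi correspondence, these charts cover $\M$. Thus $\M$ is locally homeomorphic to reduced complex spaces, and it remains to show that the transition maps between two such charts are biholomorphic; the theorem then follows by the standard patching of an atlas into a complex space.

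First I would set up two overlapping charts $\varphi_1\colon\Z_1 K_1^\C\sslash K_1^\C\to O_1$ and $\varphi_2\colon\Z_2 K_2^\C\sslash K_2^\C\to O_2$ centered at Higgs bundles $(A_1,\Phi_1)$ and $(A_2,\Phi_2)$ with $O_1\cap O_2\neq\emptyset$, and consider the transition map $\varphi_2^{-1}\circ\varphi_1$ on the preimage of the overlap. The explicit formula $\varphi[x]=[r\theta(x)]$ from Corollary~\ref{sec:kuran-local-models-symplectic-quotient-nu-ZKC} is not holomorphic in itself because the retraction $r$ and the perturbation $\beta$ are only smooth, so the transition map must be described differently. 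The key observation is that on the level of the quotients, $\varphi_2^{-1}\varphi_1$ is covered by a map relating the Kuranishi spaces $Z_1$ and $Z_2$ through their common ambient configuration space: a point in $Z_1$ gives a Higgs bundle $\theta_1(x)$ whose $\G^\C$-orbit, by local completeness (Proposition~\ref{sec:kuran-local-models-local-completeness}), meets $\theta_2(Z_2)$, and I would use the local slice data to produce a holomorphic lift between the slices $\Z_1 K_1^\C$ and $\Z_2 K_2^\C$.

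The main obstacle, which I expect to occupy most of the work, is upgrading this set-theoretic correspondence to a genuine holomorphic map between the analytic GIT quotients. The strategy is to exploit that both $\theta_1$ and $\theta_2$ are holomorphic embeddings onto slices of the \emph{same} complex manifold $\tilde{\B}\cap[(A_i,\Phi_i)+\ker(D'')^*]$, and that the $\G^\C$-action on $\sC$ is holomorphic. Concretely, I would show that the composition $\theta_2^{-1}\circ(\text{$\G^\C$-translation})\circ\theta_1$, wherever defined, is holomorphic because it is built from the holomorphic maps $\theta_i$, the holomorphic group action, and a holomorphic choice of $\G^\C$-element furnished by the implicit function theorem applied to the transversality in Proposition~\ref{sec:kuran-local-models-local-completeness}. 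This produces a $K_1^\C$-equivariant biholomorphism between saturated open subsets of $\Z_1 K_1^\C$ and $\Z_2 K_2^\C$, intertwining the two $K^\C$-actions via a holomorphic group isomorphism of the conjugate stabilizers. Since analytic GIT quotients are functorial for equivariant biholomorphisms, this descends to a biholomorphism of the quotients that agrees with $\varphi_2^{-1}\varphi_1$ as a map of sets (the agreement being checked on closed-orbit representatives using the uniqueness in Lemma~\ref{sec:kuran-local-models-unique-polystable-orbit}).

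Once the transition maps are biholomorphic, the holomorphic atlas $\{\varphi_i\}$ glues to give $\M$ the structure of a complex space, reduced because each chart $\Z_i K_i^\C\sslash K_i^\C$ is reduced, and locally biholomorphic to a Kuranishi local model by construction. The subtle points to be careful about are the dependence of the stabilizer $K^\C$ on the base point (requiring that the identification of conjugate stabilizers be tracked holomorphically) and the need to shrink the charts so that the saturation and orbit-convexity hypotheses of \cite{sjamaar1995holomorphic,heinzner1994reduction} hold simultaneously on the overlap; both are technical but not conceptually serious given the machinery already established.
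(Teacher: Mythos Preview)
Your overall strategy matches the paper's: cover $\M$ by the open embeddings $\varphi_i\colon \Z_i K_i^\C\sslash K_i^\C\to\M$ from Corollary~\ref{sec:kuran-local-models-Cinfty-Sobolev-homeo}, and prove that each transition $\varphi_2^{-1}\varphi_1$ is holomorphic by lifting through the holomorphic Kuranishi maps $\theta_i$ and the holomorphic $\G^\C$-action. The paper carries this out in Lemmas~\ref{sec:gluing-local-models-saturation}--\ref{sec:gluing-local-models-transition-functions}, and also records that $\M$ is Hausdorff (via properness of the $\G$-action on $\mu^{-1}(0)$ and the Hitchin--Kobayashi correspondence), a point you omit but which is needed for $\M$ to be a complex space.

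There is, however, a genuine misstep in your third paragraph. You assert that the lift is a \emph{$K_1^\C$-equivariant biholomorphism between saturated open subsets of $\Z_1 K_1^\C$ and $\Z_2 K_2^\C$}, intertwining the two actions via a holomorphic group isomorphism $K_1^\C\cong K_2^\C$. In general no such isomorphism exists: $K_i^\C$ is the $\G^\C$-stabilizer at the \emph{center} $(A_i,\Phi_i)$ of the chart, not at the overlap point, and two overlapping charts may well be centered at polystable Higgs bundles with stabilizers of different dimension (e.g.\ one stable with $K_1^\C=\C^*$, the other strictly polystable with larger $K_2^\C$). So there is no equivariant map between the slices to descend by functoriality. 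What the paper does instead is construct, near a fixed overlap point and after choosing $h\in\G^\C$ carrying $\theta_1(x)$ into the second slice neighborhood, a holomorphic map
\[
\psi_{21}\colon C\longrightarrow \Z_2K_2^\C\sslash K_2^\C,\qquad \psi_{21}(y)=\pi_2\,\theta_2^{-1}\bigl(\theta_1(y)\,h\,g(y)\bigr),
\]
whose target is already the \emph{quotient}, and proves it is $K_1^\C$-\emph{invariant} (Lemma~\ref{sec:gluing-local-models-holomorphicity}); the categorical property of $\pi_1$ on a $\pi_1$-saturated open set $CK_1^\C$ (Lemma~\ref{sec:gluing-local-models-saturation}) then yields the descent $\bar{\psi}_{21}$, which is checked to coincide with $\varphi_2^{-1}\varphi_1$. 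Once you replace your equivariant-biholomorphism claim by a $K_1^\C$-invariant holomorphic map into the second quotient, your outline becomes correct and essentially coincides with the paper's argument.
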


Let $(A_i,\Phi_i)$ ($i=1,2$) be Higgs bundles such that
$\mu(A_i,\Phi_i)=0$. We will use subscript $i$ to denote relevant
objects associated with $(A_i,\Phi_i)$. Let $\Z_i$ be their Kuranishi
spaces and $\Z_i K_i^\C\sslash K_i^\C$ Kuranishi local models, where
$K_i$ is the $\G$-stabilizer of $(A_i,\Phi_i)$. Let
\begin{equation*}
  \varphi_i\colon \Z_i K_i^\C\sslash K_i^\C\xrightarrow{\sim}O_i\subset\M
\end{equation*}
be the map constructed in
Theorem~\ref{sec:kuran-local-models-local-charts} such that
$O_1\cap O_2\neq\emptyset$. Hence, the transition function is given by
\begin{equation*}
  \varphi_2^{-1}\varphi_1\colon\varphi_1^{-1}(O_1\cap
  O_2)\to\varphi_2^{-1}(O_1\cap O_2)
\end{equation*}
Our goal is to show that $\varphi_2^{-1}\varphi_1$ is holomorphic so
that $\M$ is a complex space. Since holomorphicity is a local
condition, the idea is that the transition function
$\varphi_2^{-1}\varphi_1$ should be locally induced by a holomorphic
$K_1^\C$-invariant map from an open set in $\Z_1 K_1^\C$ to
$\Z_2 K_2^\C\sslash K_2^\C$. Then, the rest of the argument follows
from the universal property of the quotient map
$\pi_i\colon \Z_i K_i^\C\to \Z_i K_i^\C\sslash K_i^\C$. Here, the
technical difficulty is to find an appropriate open set in
$\Z_1 K_1^\C$ that is also $\pi_1$-saturated. This will be overcome in
the following Lemma~\ref{sec:gluing-local-models-saturation}.

To proceed, we follow the notations and the setup in the proof of
Theorem~\ref{sec:kuran-local-models-closedness-polystability}. Let
$[x]\in\varphi_1^{-1}(O_1\cap O_2)$. Using the gradient flow of
$\|\nu_0\|^2$, we may assume that $x\in\Z_1$ has a closed
$K_1^\C$-orbit. Hence, $\theta(x)$ is polystable
(Theorem~\ref{sec:kuran-local-models-closedness-polystability}),
and $\varphi_1[x]=[r\theta_1(x)]=[\theta_1(x)]$. Similarly, there is
some $x'\in\Z_2$ with closed $K_2^\C$-orbit such that
$\varphi_2[x']=\varphi_1[x]$ so that
$\theta_1(x)\sim_{\G^\C}\theta_2(x')$. Since
$\theta_i\colon\Z_i\to V_i'\subset W_i'$ is a homeomorphism,
$\theta_1(x)\in W_1'\cap W_2'h^{-1}$ for some $h\in\G^\C$.

\begin{lemma}\label{sec:gluing-local-models-saturation}
  There is an open neighborhood $C$ of $x$ in $\Z_1 K_1^\C$ such that
  \begin{enumerate}
  \item $CK_1^\C$ is $\pi_1$-saturated.
    
  \item $\theta_1(C)\subset W_1'\cap W_2'h^{-1}$.
    
  \item $[x]\in\pi_1(C)\subset\varphi_1^{-1}(O_1\cap O_2)$
  \end{enumerate}
\end{lemma}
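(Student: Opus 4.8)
The saturation of $CK_1^\C$ is the whole point, so the plan is to produce $C$ as a small neighborhood of the point $x$ inside $\Z_1$ whose $K_1^\C$-orbit is a full union of $\pi_1$-fibres, and then to read off $(2)$ and $(3)$ by continuity. Since $\Z_1$ is open in $\Z_1K_1^\C$ and $x\in\Z_1$, a small enough neighborhood of $x$ in $\Z_1K_1^\C$ is contained in $\Z_1$, where $\theta_1$ and the gradient flow of $\|\nu_0\|^2$ are defined, so I look for $C\subset\Z_1$. The representative $x$ produced by the $\|\nu_0\|^2$-flow in the paragraph preceding the lemma satisfies $\nu_0(x)=0$; thus $x$ is a minimal vector of its closed orbit and a rest point of the flow, and $h$ was selected afterwards for this $x$. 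Put $\Omega:=\varphi_1^{-1}(O_1\cap O_2)$, an open neighborhood of $[x]$, so that $\pi_1^{-1}(\Omega)$ is open, $\pi_1$-saturated, and contains $x$. By continuity of $\theta_1$,
\[
  N:=\Z_1\cap\theta_1^{-1}(W_1'\cap W_2'h^{-1})\cap\pi_1^{-1}(\Omega)
\]
is an open neighborhood of $x$ in $\Z_1$.

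To force the orbit of the neighborhood to be saturated, let $\phi_t$ be the gradient flow of $\|\nu_0\|^2$ and set
\[
  C:=\{\,z\in N:\ \phi_t(z)\in N\ \text{for all }t\ge0\ \text{and}\ \phi_\infty(z):=\lim_{t\to\infty}\phi_t(z)\in N\,\}.
\]
As in the proof of Theorem~\ref{sec:kuran-local-models-closedness-polystability}, the flow is tangent to the $K_1^\C$-orbits, so each flow line stays in a single orbit and converges to the minimal vector of the unique closed orbit in its closure, a zero of $\nu_0$. The set $C$ is forward-invariant, and since a rest point lying in $N$ belongs to $C$ we have $\phi_\infty(z)\in C$ for every $z\in C$; as $\nu_0(x)=0$ the flow fixes $x$, so $x\in C$. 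Granting that $C$ is open (discussed below), $C$ is an open neighborhood of $x$ in $\Z_1K_1^\C$.

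The saturation of $CK_1^\C$ would then run as follows. Recall that each fibre of $\pi_1$ has a unique closed $K_1^\C$-orbit, contained in the closure of every orbit of the fibre. Let $\Phi$ be a fibre with $\Phi\cap CK_1^\C\ne\emptyset$; choosing $z\in\Phi\cap CK_1^\C$ and writing $z=wg$ with $w\in C$, $g\in K_1^\C$, we get $w\in\Phi$. By definition of $C$ the flow of $w$ converges to $w_\infty:=\phi_\infty(w)\in C$ with $\nu_0(w_\infty)=0$, so by the Kempf--Ness theorem $w_\infty K_1^\C$ is closed and is therefore the unique closed orbit of $\Phi$. For any $v\in\Phi$ we then have $w_\infty\in w_\infty K_1^\C\subset\overline{vK_1^\C}$, and since $C$ is an open neighborhood of $w_\infty$ it meets $vK_1^\C$; hence $v\in CK_1^\C$. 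Thus $\Phi\subset CK_1^\C$, which is exactly saturation. This is the counterpart, in the present moving-base setting, of the saturation step in the proof of Theorem~\ref{sec:kuran-local-models-local-charts}, with monotonicity of the $L^2$-norm replaced by confinement to $N$.

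Conditions $(2)$ and $(3)$ are immediate: $C\subset N$ yields $\theta_1(C)\subset W_1'\cap W_2'h^{-1}$, while $C\subset N\subset\pi_1^{-1}(\Omega)$ together with $x\in C$ yields $[x]\in\pi_1(C)\subset\Omega=\varphi_1^{-1}(O_1\cap O_2)$. The step I expect to be the main obstacle is the openness of $C$: that confinement to $N$ and convergence of the $\|\nu_0\|^2$-flow persist under small perturbations of the initial point near the minimal vector $x$. Over any finite time this is continuous dependence on initial data; the delicate part is uniform control of the tail, where the stabilizer can jump. I would secure it from the {\L}ojasiewicz inequality for the real-analytic function $\|\nu_0\|^2$, which bounds the length of each flow line in terms of $\|\nu_0\|$ and gives uniform convergence near $x$ --- the same mechanism behind the Yang--Mills--Higgs flow estimates used elsewhere. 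Alternatively, Sjamaar's holomorphic slice theorem at $xK_1^\C$ reduces the construction to a linear reductive action fixing the origin of the slice, where balls about $0$ are orbit-convex and the balls-about-$0$ saturation argument of Theorem~\ref{sec:kuran-local-models-local-charts} applies verbatim and transports back to a saturated neighborhood of $xK_1^\C$.
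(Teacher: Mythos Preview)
Your saturation argument and the derivation of $(2)$ and $(3)$ are correct, and the overall architecture---find the closed orbit inside the candidate neighborhood, then use openness to show every orbit in the same fibre hits it---is exactly the mechanism the paper uses. The issue is the construction of $C$ itself, and you have identified it: you have \emph{not} proved that your flow-confinement set
\[
  C=\{\,z\in N:\ \phi_t(z)\in N\ \text{for all }t\ge0\ \text{and}\ \phi_\infty(z)\in N\,\}
\]
is open. The {\L}ojasiewicz route can be made to work (the function $\|\nu_0\|^2$ is polynomial), and Sjamaar's slice at $xK_1^\C$ would also succeed, but either is substantial extra input that you have only gestured at. As it stands the proof is incomplete at precisely the point you flagged.

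The paper avoids this difficulty by a different choice of $C$. Instead of confining the $\|\nu_0\|^2$-flow, it takes an open ball $Q$ about $x$ with $\theta_1(\Z_1\cap Q)\subset W_1'\cap W_2'h^{-1}$ and sets
\[
  C=\pi_1^{-1}\pi_1\bigl(\nu_1^{-1}(0)\cap(\Z_1\cap Q)K_1^\C\bigr)\cap(\Z_1\cap Q)K_1^\C.
\]
Openness of $C$ is then immediate from Corollary~\ref{sec:kuran-local-models-symplectic-quotient-nu-ZKC}: the map $\bar{j}\colon(\nu_1^{-1}(0)\cap\Z_1K_1^\C)/K_1\to\Z_1K_1^\C\sslash K_1^\C$ is a homeomorphism, so $\pi_1(\nu_1^{-1}(0)\cap(\Z_1\cap Q)K_1^\C)$ is open in the quotient and its $\pi_1$-preimage is open upstairs. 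No flow-convergence estimate is needed; the analytic work was already absorbed into Corollary~\ref{sec:kuran-local-models-symplectic-quotient-nu-ZKC}. The saturation argument then runs as you wrote it (closed orbit in the fibre lies in $C$, openness forces every orbit of the fibre to meet $C$), and $(3)$ is checked by pushing a point of $C$ to a $\nu_1$-zero in $(\Z_1\cap Q)K_1^\C$ and observing its $\theta_1$-image is polystable and lies in $W_1'\cap W_2'h^{-1}$. The upshot: your plan is sound, but the paper's construction trades your hard step (tail control of the flow) for an appeal to a result already in hand.
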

\begin{proof}
  Since $T_1\colon N_1'\times V_1'\to W_1'$ and $\theta_1\colon\Z_1\to
  V_1'$ are homeomorphisms, there is an open ball $Q$ around $x$ such
  that
  \begin{equation*}
    \theta_1(\Z_1\cap Q)\subset W_1'\cap W_2'h^{-1}
  \end{equation*}
  Since $\Z_1$ is open in $\Z_1K_1^\C$, $(\Z_1\cap Q)K_1^\C$ is open
  in $\Z_1K_1^\C$. Then, set
  \begin{equation*}
    C=\pi_1^{-1}\pi_1(\nu_1^{-1}(0)\cap(\Z_1\cap Q)K_1^\C)\cap(\Z_1\cap Q)K_1^\C
  \end{equation*}
  By
  Corollary~\ref{sec:kuran-local-models-symplectic-quotient-nu-ZKC},
  $C$ is open in $\Z_1K_1^\C$. Clearly, $(2)$ follows and $x\in C$.

  To show that $CK_1^\C$ is $\pi_1$-saturated, let $x\in \Z_1K_1^\C$
  be such that $\pi_1(x)=\pi_1(y)$ for some $y\in C$. By definition of
  $C$, $\pi_1(y)=\pi_1(y')$ for some
  $y'\in \nu_1^{-1}(0)\cap (\Z_1\cap Q)K_1^\C$. Since $y'K_1^\C$ is
  closed, $y'K_1^\C\subset \bar{xK_1^\C}$. Since
  $y'K_1^\C\cap C\neq\emptyset$, and $C$ is open, we conclude that
  $xK_1^\C\cap C\neq\emptyset$. This shows that $(1)$. If $y\in C$,
  then $\pi_1(y)=\pi_1(y'g)$ for some
  $y'g\in\nu_1^{-1}(0)\cap (\Z_1\cap Q)K_1^\C$ with $y'\in\Z_1\cap
  Q$. Therefore, $\varphi_1[y]=[\theta_1(y')]$. By the construction of
  $\varphi_i$ in
  Corollary~\ref{sec:kuran-local-models-symplectic-quotient-nu-ZKC}
  and Theorem~\ref{sec:kuran-local-models-local-charts}, we see that
  \begin{equation*}
    O_i=\Pi r\theta_i(\Z_i)=\Pi r(V_i')=\Pi r(W_i')
  \end{equation*}
  where $\Pi\colon\B^{ps}\to\M$ is the quotient map. Since
  $\theta_1(y')\in W_1'\cap W_2'h^{-1}$ is polystable, it is easy to
  see that $[\theta_1(y')]\in O_1\cap O_2$. This proves $(3)$.
\end{proof}
Now, for $y\in C$, $\theta_1(y)h\in W_2'$. Since $T_2$ is a
homeomorphism, there is $g(y)\in\G^\C$, as a function of $y\in C$,
such that $\theta_1(y)hg(y)\in V_2'$. Hence, we have obtained a map
\begin{equation*}
  \psi_{21}\colon C\to \Z_2K_2^\C\sslash K_2^\C\qquad \psi_{21}(y)=\pi_2\theta_2^{-1}(\theta_1(y)hg(y))
\end{equation*}
\begin{lemma}\ \label{sec:gluing-local-models-holomorphicity}
  \begin{enumerate}
  \item $\psi_{21}$ is holomorphic.
    
  \item If $y,y'\in C$ are in the same $K_1^\C$-orbit, then
    $\psi(y)=\psi(y')$.
  \end{enumerate}
\end{lemma}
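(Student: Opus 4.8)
The plan is to realize $\psi_{21}$ as a composition of holomorphic maps by trading the gauge correction $g(y)$ produced by $T_2$, which is built from the non-holomorphic factor $e^{-i*\beta}e^{u}$, for a holomorphic one coming from a holomorphic slice at $(A_2,\Phi_2)$. First observe that $y\mapsto\theta_1(y)h$ is holomorphic on $C$: the Kuranishi map $\theta_1$ is holomorphic by Lemma~\ref{sec:kuran-local-models-Fmap-properties}(2), its $K_1^\C$-equivariant extension to $\Z_1K_1^\C$ (well defined by Proposition~\ref{sec:kuran-local-models-infi-slice-theorem}(1)) stays holomorphic because $K_1^\C$ acts holomorphically, and right translation by $h\in\G^\C$ is holomorphic since the $\G^\C$-action preserves the complex structure $I$. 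Next I would build a \emph{holomorphic} slice for the $\G^\C$-action at $(A_2,\Phi_2)$. Using the decomposition $\Omega^{0,1}(\g_E^\C)\oplus\Omega^{1,0}(\g_E^\C)=\ker(D_2'')^*\oplus\im D_2''$ and the injectivity of $D_2''$ on $\bH^0(C_{\mu_\C})^\perp$, the derivative of the map $(p,s)\mapsto p\cdot e^{s}$ at $((A_2,\Phi_2),0)$ sends $(v,s)$ to $v+D_2''s$ and is therefore a linear isomorphism; this is the holomorphic counterpart of the computation in Proposition~\ref{sec:kuran-local-models-local-completeness}, with $e^{s}$ replacing $e^{-i*\beta}e^{u}$. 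Hence $(p,s)\mapsto p\cdot e^{s}$ is a local biholomorphism from a neighborhood of $((A_2,\Phi_2),0)$ in $[(A_2,\Phi_2)+\ker(D_2'')^*]\times\bH^0(C_{\mu_\C})^\perp$ onto a neighborhood of $(A_2,\Phi_2)$ in $\sC_k$, which we may take to contain $W_2'$.

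Shrinking $W_2'$ into the image of this slice, I would write $\theta_1(y)h=p(y)\cdot e^{s(y)}$ with $p(y)$ and $s(y)$ holomorphic in $y$, and set $g(y):=e^{-s(y)}$, a holomorphic correction with $\theta_1(y)hg(y)=p(y)$. Since $\theta_1(y)h\in\B^{ss}$ and $\B^{ss}$ is $\G^\C$-invariant, $p(y)\in\B^{ss}$; as $p(y)$ also lies in the slice $(A_2,\Phi_2)+\ker(D_2'')^*$ we get $p(y)\in V_2'$, and because $\B\subset\tilde{\B}_2$ the point $p(y)$ lies in the image of $\theta_2$. Therefore $z(y):=\theta_2^{-1}(p(y))$ is holomorphic, as $\theta_2^{-1}$ is a local biholomorphism by Lemma~\ref{sec:kuran-local-models-Fmap-properties}(2), and $\psi_{21}(y)=\pi_2(z(y))$ is holomorphic since the analytic GIT quotient map $\pi_2$ is holomorphic. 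For this to compute the map $\psi_{21}$ as originally defined, one must know that $\psi_{21}$ is independent of the choice of $g(y)$: if $g_1,g_2\in\G^\C$ both send $\theta_1(y)h$ into $V_2'$, the two images are $\G^\C$-equivalent semistable Higgs bundles lying in the slice, so applying Lemma~\ref{sec:kuran-local-models-unique-polystable-orbit} to their Yang--Mills--Higgs limits together with the injectivity of $\varphi_2$ from Theorem~\ref{sec:kuran-local-models-local-charts} shows their $\theta_2^{-1}$-images have the same $\pi_2$-class. I expect this independence to be the main obstacle, as it is exactly what licenses replacing the correction supplied by $T_2$ with the holomorphic one.

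Granting this independence, part~(2) follows from equivariance. If $y'=yk$ with $k\in K_1^\C$, then $\theta_1(y')h=\theta_1(y)kh=\bigl(\theta_1(y)h\bigr)(h^{-1}kh)$, so the choice $g(y'):=(h^{-1}kh)^{-1}g(y)$ is admissible and gives $\theta_1(y')hg(y')=\theta_1(y)hg(y)=p(y)\in V_2'$. Hence $z(y')=z(y)$ and $\psi_{21}(y')=\pi_2(z(y))=\psi_{21}(y)$, and by the independence just established this equality holds for every admissible choice of correction, so $\psi_{21}$ is constant on $K_1^\C$-orbits in $C$.
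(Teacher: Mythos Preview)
Your proposal is correct and follows essentially the same route as the paper: both replace the gauge correction coming from $T_2$ by the holomorphic slice $(p,s)\mapsto p\cdot e^{s}$ on $[(A_2,\Phi_2)+\ker(D_2'')^*]\times\bH^0(C_{\mu_\C})^\perp$, then read off holomorphicity of $\psi_{21}$ from the holomorphicity of $\theta_1$, the $\G^\C$-action, the inverse slice map, and $\pi_2$. The paper simply writes $g(y)=\exp(-p_1T_2^{-1}(\theta_1(y)h))$ with $T_2$ taken in its holomorphic form $\bH^0(C_{\mu_\C})^\perp\times((A_2,\Phi_2)+\ker(D_2'')^*)\to\sC$, without commenting on the passage from the real parametrization $(u,\beta)\mapsto e^{-i*\beta}e^{u}$; you are more careful in isolating the independence-of-choice issue and proving it via $\varphi_2$-injectivity, which is exactly the argument the paper uses for part~(2). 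For part~(2) itself the paper argues directly: from $y'\in yK_1^\C$ one gets $\theta_2(z)\sim_{\G^\C}\theta_2(z')$, hence $r\theta_2(z)\sim_{\G}r\theta_2(z')$, hence $\varphi_2[z]=\varphi_2[z']$, hence $[z]=[z']$. Your deduction of (2) from equivariance plus the independence claim is a minor reorganization of the same ingredients.
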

\begin{proof}
  Explicitly, we have
  \begin{equation*}
    g(y)=\exp(-p_1T_2^{-1}(\theta_1(y)h))
  \end{equation*}
  where $p_1$ is the projection onto the first factor. Since
  \begin{equation*}
    T_2\colon\bH^0(C_{\mu_\C}^2)^\perp\times((A_2,\Phi_2)+\ker D_2''^*)\to\sC
  \end{equation*}
  is holomorphic, its inverse, when restricted to appropriate open
  neighborhoods, is also holomorphic. Moreover, since the Kuranishi
  map is holomorphic, $\theta_1$ is also holomorphic when the codomain
  is appropriately extended. Therefore, we conclude that $g\colon
  C\to\G^\C$ is holomorphic. Finally, since the $\G^\C$-action is
  holomorphic, we conclude that $\psi_{21}$ is holomorphic.

  To show $(2)$, suppose there are $z,z'\in \Z_2$ such that
  \begin{align*}
    \theta_2(z)&=\theta_1(y)hg(y)\\
    \theta_2(z')&=\theta_1(y')hg(y')
  \end{align*}
  We want to show that $\pi_2(z)=\pi_2(z')$. Since $y$ and $y'$ are in
  the same $K_1^\C$-orbit,
  \begin{equation*}
    \theta_2(z)\sim_{\G^\C}\theta_1(y)\sim_{\G^\C}\theta_1(y')\sim_{\G^\C}\theta_2(z')
  \end{equation*}
  so that $r\theta_2(z)\sim_\G r\theta_2(z')$. This means that
  $\varphi_2[z]=\varphi_2[z']$. Since $\varphi_2$ is injective, $[z]=[z']$.
\end{proof}
\begin{lemma}\label{sec:gluing-local-models-transition-functions}
  The transition function $\varphi_2^{-1}\varphi_1$ is holomorphic.
\end{lemma}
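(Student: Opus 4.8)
The plan is to prove holomorphicity locally. Since holomorphicity is a local condition and, by Lemma~\ref{sec:gluing-local-models-saturation}(3), the sets $\pi_1(C)$ cover $\varphi_1^{-1}(O_1\cap O_2)$ as $[x]$ ranges over this open set, it will suffice to show that $\varphi_2^{-1}\varphi_1$ is holomorphic on each such $\pi_1(C)$. So I would fix $[x]$, choose (via the gradient flow of $\|\nu_0\|^2$) a representative $x\in\Z_1$ with closed $K_1^\C$-orbit, and take the neighborhood $C$ furnished by Lemma~\ref{sec:gluing-local-models-saturation}. The first step is to descend the holomorphic map $\psi_{21}\colon C\to\Z_2 K_2^\C\sslash K_2^\C$ of Lemma~\ref{sec:gluing-local-models-holomorphicity} to the quotient $\pi_1(C)$. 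Because $C$ is open, $K_1^\C$-invariant and $\pi_1$-saturated (Lemma~\ref{sec:gluing-local-models-saturation}(1)), the image $\pi_1(C)$ is open and the restriction $\pi_1\colon C\to\pi_1(C)$ is again an analytic categorical quotient by $K_1^\C$. As $\psi_{21}$ is $K_1^\C$-invariant (Lemma~\ref{sec:gluing-local-models-holomorphicity}(2)), its universal property will produce a unique holomorphic map $\bar{\psi}_{21}\colon\pi_1(C)\to\Z_2 K_2^\C\sslash K_2^\C$ with $\bar{\psi}_{21}\circ\pi_1=\psi_{21}$.

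Next I would identify $\bar{\psi}_{21}$ with the transition function. Given $y\in C$, set $z=\theta_2^{-1}(\theta_1(y)hg(y))\in\Z_2$, so that $\bar{\psi}_{21}([y])=\pi_2(z)=[z]$. From $\theta_2(z)=\theta_1(y)hg(y)$ with $h,g(y)\in\G^\C$ one gets $\theta_2(z)\sim_{\G^\C}\theta_1(y)$; combining the formula $\varphi_i[\,\cdot\,]=[r\theta_i(\,\cdot\,)]$ of Corollary~\ref{sec:kuran-local-models-symplectic-quotient-nu-ZKC} with the fact that $r$ assigns to a semistable Higgs bundle the graded object of its Seshadri filtration (so $\G^\C$-equivalent bundles have $\G^\C$-equivalent retractions, cf.\ Lemma~\ref{sec:kuran-local-models-unique-polystable-orbit}) should give
\begin{equation*}
  \varphi_2(\bar{\psi}_{21}([y]))=[r\theta_2(z)]=[r\theta_1(y)]=\varphi_1([y]).
\end{equation*}
This will show $\varphi_2\circ\bar{\psi}_{21}=\varphi_1$ on $\pi_1(C)$, and since $\varphi_2$ is an open embedding (Theorem~\ref{sec:kuran-local-models-local-charts}) it forces $\bar{\psi}_{21}=\varphi_2^{-1}\varphi_1$ there. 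As $\bar{\psi}_{21}$ is holomorphic, the transition function is holomorphic on $\pi_1(C)$, hence everywhere.

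The hard part will be the descent in the first step, i.e.\ obtaining a holomorphic map on the quotient out of the $K_1^\C$-invariant holomorphic map $\psi_{21}$ on $C$. A $K_1^\C$-invariant holomorphic map on an arbitrary open invariant set need not descend; this is exactly what forced the careful construction of the $\pi_1$-saturated neighborhood $C$ in Lemma~\ref{sec:gluing-local-models-saturation}. Saturatedness guarantees both that $\pi_1(C)$ is open and that $\pi_1|_C$ inherits the universal property of the Heinzner--Loose categorical quotient, so the descent becomes automatic. Everything else is comparatively routine: the holomorphicity of $\psi_{21}$ and of the cocycle $y\mapsto g(y)$ is already established in Lemma~\ref{sec:gluing-local-models-holomorphicity}, and the final identification reduces to the uniqueness of the polystable representative in a $\G^\C$-orbit closure.
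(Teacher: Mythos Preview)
Your argument is essentially the paper's proof: descend the $K_1^\C$-invariant holomorphic map $\psi_{21}$ through the categorical quotient to obtain $\bar{\psi}_{21}$, and then verify $\varphi_2\circ\bar{\psi}_{21}=\varphi_1$ on the resulting open set. There is one small slip to fix. You assert that $C$ itself is $K_1^\C$-invariant and $\pi_1$-saturated, citing Lemma~\ref{sec:gluing-local-models-saturation}(1); but that lemma only says that $CK_1^\C$ is $\pi_1$-saturated, and $\psi_{21}$ as constructed is a priori defined on $C$, not on $CK_1^\C$. The paper handles this by first extending $\psi_{21}$ to a $K_1^\C$-invariant holomorphic map on $CK_1^\C$ (using Lemma~\ref{sec:gluing-local-models-holomorphicity}(2) to guarantee the extension is well-defined), and only then invoking the universal property of $\pi_1\colon CK_1^\C\to CK_1^\C\sslash K_1^\C$. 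Once you insert this extension step, your proof matches the paper's.
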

\begin{proof}
  By Lemma~\ref{sec:gluing-local-models-holomorphicity},
  $\psi_{21}$ extends to a $K_1^\C$-invariant holomorphic map
  \begin{equation*}
    \psi_{21}\colon CK_1^\C\to \Z_2K_2^\C\sslash K_2^\C
  \end{equation*}
  Since $CK_1^\C$ is an $\pi_1$-saturated open set
  (Lemma~\ref{sec:gluing-local-models-saturation}),
  \begin{equation*}
    \pi_2\colon CK_1^\C\to\pi_1(CK_1^\C)=:CK_1^\C\sslash K_1^\C
  \end{equation*}
  is also a categorical quotient. As
  a consequence, $\psi_{21}$ decends to a holomorphic map
  \begin{equation*}
    \bar{\psi}_{21}\colon CK_1^\C\sslash K_1^\C\to \Z_2K_2^\C\sslash K_2^\C
  \end{equation*}
  Let $[c]\in CK_1^\C\sslash K_1^\C$ with $c\in C$ and
  $z=\theta_2^{-1}(\theta_1(c)hg(c))$. Hence,
  $\theta_2(z)\sim_{\G^\C}\theta_1(c)$. Therefore,
  \begin{equation*}
    \varphi_2\bar{\psi}_{21}[c]
    =\varphi_2\psi_{21}(c)
    =\varphi_2\pi_2(z)
    =\Pi(r\theta_2(z))
    =\Pi(r\theta_1(z))
    =\varphi_1[c]
  \end{equation*}
  This shows that the transition function $\varphi_2^{-1}\varphi_1$
  coincides with a holomorphic map $\bar{\psi}_{21}$ on an open
  neighborhood $CK_1^\C\sslash K_1^\C$ of $[x]$ in
  $\varphi_1^{-1}(O_1\cap O_2)$. This completes the proof.
\end{proof}
\begin{proof}[Proof of
  Theorem~\ref{sec:gluing-local-models-complex-space-structure}]
  By the properness of $\G$-action, $(\mu^{-1}(0)\cap\B)/\G$ is
  Hausdorff. The Hitchin-Kobayashi correspondence implies that
  $\M$ is Hausdorff. The Kuranishi local models are constructed in
  Corollary~\ref{sec:kuran-local-models-symplectic-quotient-nu-ZKC}
  and Theorem~\ref{sec:kuran-local-models-local-charts}. By
  Lemma~\ref{sec:gluing-local-models-transition-functions}, the
  transition functions are holomorphic.
\end{proof}

\section{Singularities in Kuranishi spaces}\label{sec:sing-kuran-spac}
In this section, we will show that Kuranishi spaces have only cone
singularities. We will use the same notations in
Section~\ref{sec:kuran-local-models}. The main result in this section
is the following (cf. \cite[Theorem
2.24]{huebschmann1995singularities} and \cite[Theorem
3]{arms1981symmetry}).
\begin{theorem}\label{sec:sing-kuran-spac-diagram}
  The following diagram commutes
  \begin{equation*}
    \xymatrix
    {
      \tilde{\B}\cap((A,\Phi)+\ker (D'')^*)\ar[r]^-{F}\ar[d]^-{\mu_\C}
      &\bH^1\ar[dl]^-{\frac{1}{2}H[\cdot,\cdot]}\\
      \bH^2(C_{\mu_\C})
    }
  \end{equation*}
\end{theorem}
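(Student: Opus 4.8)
Write $a=(\alpha,\eta)$ for a point of $\tilde{\B}\cap((A,\Phi)+\ker (D'')^*)$, so that $(D'')^*a=0$ and, since $(1-H)\mu_\C(A+\alpha,\Phi+\eta)=0$, the element $\mu_\C(A+\alpha,\Phi+\eta)$ is harmonic, i.e. lies in $\bH^2(C_{\mu_\C})$. The computation in the proof of Lemma~\ref{sec:kuran-local-models-Fmap-properties} records the Maurer--Cartan identity
\begin{equation*}
  \mu_\C(A+\alpha,\Phi+\eta)=D''a+[\alpha'',\eta]=D''a+\tfrac12[a,a],
\end{equation*}
and by Lemma~\ref{sec:kuran-local-models-Fmap-properties}(3),(4) the element $x:=F(a)=a+(D'')^*G[\alpha'',\eta]$ lies in $\bH^1$ on this locus (both $D''x=0$ and $(D'')^*x=0$ hold). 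Since $\tfrac12 H[F(a),F(a)]=\tfrac12H[x,x]$, the commutativity of the diagram is exactly the statement $\mu_\C(A+\alpha,\Phi+\eta)=\tfrac12H[x,x]$. The plan is to prove this through the two equalities $\mu_\C(A+\alpha,\Phi+\eta)=\tfrac12 H[a,a]=\tfrac12 H[x,x]$.

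The first equality is immediate: applying $H$ to the Maurer--Cartan identity and using that $H$ annihilates $\im D''$ gives $\mu_\C(A+\alpha,\Phi+\eta)=H(D''a)+\tfrac12H[a,a]=\tfrac12H[a,a]$. For the second equality set $w:=x-a=(D'')^*G[\alpha'',\eta]=\tfrac12(D'')^*G[a,a]\in\im(D'')^*$. Expanding $[x,x]=[a+w,a+w]$ and using that the bracket is graded-symmetric on degree-one elements yields $[x,x]-[a,a]=2[a,w]+[w,w]=[a+x,w]$. Thus it suffices to prove $H[a+x,w]=0$, that is, that $[a+x,w]$ is $L^2$-orthogonal to every harmonic $\chi\in\bH^2(C_{\mu_\C})$. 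This orthogonality is the heart of the matter.

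The difficulty is that $a+x$ is a one-form, so the adjoint of $[a+x,\cdot]$ involves the Hodge star $*$ and does not integrate by parts cleanly against $D''$. To circumvent this I would first rewrite $w$ using the K\"ahler identities so that the troublesome $(D'')^*$ sits on a zero-form. Since $\mu_\C(A+\alpha,\Phi+\eta)$ is harmonic, $G[a,a]=2G(\mu_\C(A+\alpha,\Phi+\eta)-D''a)=-2G(D''a)$, so $w=-(D'')^*\phi$ with $\phi:=G(D''a)\in\Omega^{1,1}(\g_E^\C)$. Because $\dim_\C X=1$ we have $D'\phi\in\Omega^{2,1}(\g_E^\C)=0$, so $(D'')^*=-i[*,D']$ collapses to $(D'')^*\phi=iD'(*\phi)$; hence $w=-iD'\lambda$ with the zero-form $\lambda:=*\phi\in\Omega^0(\g_E^\C)$. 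Now fix harmonic $\chi$ and use the graded Leibniz rule $D'[a+x,\lambda]=[D'(a+x),\lambda]-[a+x,D'\lambda]$ to write
\begin{equation*}
  [a+x,w]=-i[a+x,D'\lambda]=-i[D'(a+x),\lambda]+iD'[a+x,\lambda].
\end{equation*}
Pairing with $\chi$, the last term equals $i\langle[a+x,\lambda],(D')^*\chi\rangle$, which vanishes because the K\"ahler identities force $\Delta_{D'}=\Delta_{D''}$, so $\chi$ is also $D'$-harmonic and $(D')^*\chi=0$. For the first term, ad-invariance of the trace pairing (clean here, as $\lambda$ is a zero-form, so no Hodge star appears) together with the adjunction for $D'$ gives $-i\langle[D'(a+x),\lambda],\chi\rangle=i\langle a+x,(D')^*[\lambda^\dagger,\chi]\rangle$, where $\lambda^\dagger$ is the fibrewise Hermitian adjoint; applying $(D')^*=i[*,D'']$ to the top-degree form $[\lambda^\dagger,\chi]$ (on which $D''$ vanishes) rewrites this as $\langle a+x,D''(*[\lambda^\dagger,\chi])\rangle=\langle(D'')^*(a+x),*[\lambda^\dagger,\chi]\rangle=0$, since $(D'')^*a=0$ and $(D'')^*x=0$. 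Hence $H[a+x,w]=0$, which proves $\tfrac12H[x,x]=\tfrac12H[a,a]=\mu_\C(A+\alpha,\Phi+\eta)$.

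The only genuine obstacle is the orthogonality $H[a+x,w]=0$; everything else is formal. The essential device is to transport the hidden $(D'')^*$ in $w$ onto a zero-form via the K\"ahler identities \emph{before} invoking ad-invariance, so that the integration by parts meets no Hodge star, and then to route the two resulting terms into the vanishing conditions $(D')^*\chi=0$ (harmonicity of $\chi$) and $(D'')^*(a+x)=0$ (the slice condition together with harmonicity of $x$). The hypothesis $\dim_\C X=1$ is used repeatedly to kill forms of bidegree $(2,1)$, $(1,2)$ and $(2,2)$, and the point requiring the most care is the bookkeeping of the factors of $i$ and the signs in the graded Leibniz rule and in the two adjunctions.
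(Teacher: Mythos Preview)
Your argument is correct and follows the same strategy as the paper: rewrite $(D'')^*$ as $D'$ via the K\"ahler identities, use that $D'$ is a derivation for the bracket, and conclude using that the harmonic projection annihilates $D'$-exact forms. The paper's execution is a bit leaner: on $1$-forms over a curve the identity $(D'')^*=-i*D'$ shows that $(D'')^*a=0$ and $(D'')^*x=0$ already force $D'(a+x)=0$, so your term $[D'(a+x),\lambda]$ vanishes outright and the second round of ad-invariance and integration by parts is unnecessary.
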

\begin{proof}
  By construction of $\tilde{\B}$, the restriction of $\mu_\C$ to
  $\tilde{\B}$ is given by
  \begin{equation*}
    \mu_\C(A+\alpha,\Phi+\eta)=H\mu_\C(A+\alpha,\Phi+\eta)=\frac{1}{2}H[\alpha'',\eta;\alpha'',\eta]=H[\alpha'',\eta]
  \end{equation*}
  where $(A+\alpha'',\Phi+\eta)\in\tilde{\B}$. By definition of the
  Kuranishi space $Z$, it suffices to prove:
  \begin{enumerate}
  \item $H[(\alpha'',\eta),(D'')^*G[\alpha'',\eta;\alpha'',\eta]]=0$.
    
  \item $H[(D'')^*G[\alpha'',\eta;\alpha'',\eta],(D'')^*G[\alpha'',\eta;\alpha'',\eta]]=0$.
  \end{enumerate}
  for any $(\alpha'',\eta)\in\ker (D'')^*$. By K\"ahler's
  identities,
  \begin{equation*}
    H[(\alpha'',\eta),(D'')^*G[\alpha'',\eta;\alpha'',\eta]]
    =\pm iH[(\alpha'',\eta),D'*G[\alpha'',\eta;\alpha'',\eta]]
  \end{equation*}
  and $(\alpha'',\eta)\in\ker D'$. Since $D'$ is a derivation with
  respect to $[\cdot,\cdot]$, we see that
  \begin{equation*}
    H[(\alpha'',\eta),D'*G[\alpha'',\eta;\alpha'',\eta]]=\pm HD'[(\alpha'',\eta),*G[\alpha'',\eta;\alpha'',\eta]]=0
  \end{equation*}
  This proves $(1)$. The same argument shows $(2)$. This completes the
  proof.
\end{proof}
As a corollary, we obtain a description of singularities in the
Kuranishi spaces.
\begin{corollary}\label{sec:sing-kuran-spac-singularities}
  The Kuranishi space $Z$ is an open neighborhood of $0$ in the
  quadratic cone
  \begin{equation*}
    Q=\{x\in\bH^1\colon \frac{1}{2}H[x,x]=0\}
  \end{equation*}
\end{corollary}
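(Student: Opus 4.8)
The plan is to read the statement off directly from Theorem~\ref{sec:sing-kuran-spac-diagram} together with the definition of the Kuranishi space $Z$. Recall that $\B=\mu_\C^{-1}(0)$, since by definition $\mu_\C(A,\Phi)=\bar{\partial}_A\Phi$ and $\B$ is cut out by the equation $\bar{\partial}_A\Phi=0$. Recall also that $Z=\theta^{-1}(\B\cap\tilde{U})$, where $\theta=F^{-1}\colon U\to\tilde{U}$ is a biholomorphism onto an open neighborhood $\tilde{U}$ of $(A,\Phi)$ in $\tilde{\B}\cap[(A,\Phi)+\ker(D'')^*]$. Since $\theta$ always takes values in $\tilde{\B}$, a point $x\in U$ lies in $Z$ precisely when $\theta(x)\in\B$, that is, when $\mu_\C(\theta(x))=0$.

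Next I would invoke the commutative diagram of Theorem~\ref{sec:sing-kuran-spac-diagram}. With $F=\theta^{-1}$, commutativity reads $\mu_\C(\theta(x))=\tfrac{1}{2}H[x,x]$ for every $x\in U$; note that the right-hand side lands automatically in $\bH^2(C_{\mu_\C})$, which is consistent with the identity $\mu_\C(\theta(x))=H\mu_\C(\theta(x))$ valid on $\tilde{\B}$. Combining this with the previous paragraph gives
\begin{equation*}
  Z=\{x\in U\colon\mu_\C(\theta(x))=0\}=\Bigl\{x\in U\colon\tfrac{1}{2}H[x,x]=0\Bigr\}=U\cap Q.
\end{equation*}
Since $U$ is an open ball around $0$ in $\bH^1$, the set $U\cap Q$ is an open neighborhood of $0$ in the quadratic cone $Q$, which is exactly the assertion.

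There is essentially no obstacle here: all of the analytic content—in particular the vanishing identities (1) and (2) established in the theorem via K\"ahler's identities and the derivation property of $D'$—is already packaged in Theorem~\ref{sec:sing-kuran-spac-diagram}. The only point deserving a moment's care is the bookkeeping that identifies the two quadratic expressions: the definition of $Z$ is phrased through $H[\theta(x),\theta(x)]$ evaluated on the (non-harmonic) deformation $\theta(x)$, whereas $Q$ is defined by the harmonic quadratic form $\tfrac{1}{2}H[x,x]$ on $\bH^1$, and it is precisely the map $F$ in the theorem that transports one to the other. Since $F$ is a local biholomorphism around $0$ (Lemma~\ref{sec:kuran-local-models-Fmap-properties}), this identification is transparent and the corollary follows immediately.
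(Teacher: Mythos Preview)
Your argument is correct and is precisely the paper's approach: the paper's proof is the single sentence ``This is clear by definition of Kuranishi spaces and Theorem~\ref{sec:sing-kuran-spac-diagram},'' and you have simply unpacked that sentence in detail. The key identification $Z=U\cap Q$ via $\mu_\C\circ\theta=\tfrac{1}{2}H[\cdot,\cdot]$ on $U$ is exactly what the commutative diagram encodes.
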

\begin{proof}
  This is clear by definition of Kuranishi spaces and
  Theorem~\ref{sec:sing-kuran-spac-diagram}.
\end{proof}
It is easy to see that the complex structures on $\sC$ restrict to
$\bH^1$ so that $\bH^1$ has a linear hyperK\"ahler structure. In
particular, the complex symplectic form $\Omega_\C$ on $\sC$ restricts
to $\bH^1$. Hence, there is a standard complex moment map
$\nu_{0,\C}\colon\bH^1\to\bH^2(C_{\mu_\C})$ for the $K^\C$-action with
respect to the linear complex symplectic structure. More precisely,
$\nu_0$ is defined by
\begin{equation*}
  \langle\nu_{0,\C}(x),\xi\rangle=\frac{1}{2}\Omega_\C(x\cdot\xi,x)\qquad\xi\in\bH^0(C_{\mu_\C})
\end{equation*}
Since $i\colon\bH^1\hookrightarrow\sC$ is $K^\C$-equivariant, and
$\mu_\C$ is a complex moment map, $Hi^*\mu_\C$ is a complex moment map
for the $K^\C$-action on $\bH^1$, where $H$ is the harmonic projection
onto $\bH^2(C_{\mu_\C})$. Since $Hi^*\mu_\C(0)=0$, we see that
$Hi^*\mu_\C=\nu_{0,\C}$. On the other hand,
$Hi^*\mu_\C=\frac{1}{2}H[\cdot,\cdot]$. Hence, $Q$ is the zero set of
the standard complex moment map $\nu_{0,\C}$.

Obviously, $\nu_{0,\C}^{-1}(0)$ is a closed complex subspace of
$\bH^1$. In fact, it is an affine variety. Therefore, the affine GIT
quotient $\nu_{0,\C}^{-1}(0)\sslash K^\C$ exists such that the
inclusion
$\nu_0^{-1}(0)\cap\nu_{0,\C}^{-1}(0)\hookrightarrow\nu_{0,\C}^{-1}(0)$
induces a homeomorphism (see \cite[(1.4)]{heinzner1994reduction})
\begin{equation*}
  (\nu_0^{-1}(0)\cap\nu_{0,\C}^{-1}(0))/K\xrightarrow{\sim}\nu_{0,\C}^{-1}(0)\sslash K^\C
\end{equation*}
Note that $(\nu_0^{-1}(0)\cap\nu_{0,\C}^{-1}(0))/K$ is precisely the
hyperK\"ahler quotient with respect to the standard hyperK\"ahler moment
maps on $\bH^1$. 
\begin{theorem}[=Theorem~\ref{sec:introduction-local-model}]\label{sec:sing-kuran-spac-local-model}
  Let $[A,\Phi]\in\M$ be a point such that $\mu(A,\Phi)=0$ and $\bH^1$
  its deformation space, a harmonic space defined in
  $C_{\mu_\C}$. Then, the following hold:
  \begin{enumerate}
  \item $\bH^1$ is a complex-symplectic vector space.
    
  \item The $\G^\C$-stabilizer $K^\C$ at $(A,\Phi)$ is a complex
    reductive group, acts on $\bH^1$ linearly and preserves the
    complex-symplectic structure on $\bH^1$. Moreover, the
    $K^\C$-action on $\bH^1$ admits a canonical complex moment map
    $\nu_{0,\C}$ such that $\nu_{0,\C}(0)=0$.
    
  \item Around $[A,\Phi]$, the moduli space $\M$ is locally
    biholomorphic to an open neighborhood of $[0]$ in the complex
    symplectic quotient $\nu_{0,\C}^{-1}(0)\sslash K^\C$ which is an
    affine GIT quotient.
  \end{enumerate}
\end{theorem}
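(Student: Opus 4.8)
The plan is to treat the three items in turn, deducing (1) and (2) almost immediately from what precedes and concentrating the real work on (3). For (1), I would note that the inclusion $\bH^1\hookrightarrow\sC$ is compatible with the hyperK\"ahler structure, which (as recorded just before the statement) restricts to a linear hyperK\"ahler structure on the finite-dimensional space $\bH^1$; consequently the complex symplectic form $\Omega_\C$ on $\sC$ restricts to a closed non-degenerate skew form on $\bH^1$, so $\bH^1$ is by definition a complex symplectic vector space. For (2), the complexification statement is Proposition~\ref{sec:kuran-local-models-stabilizers}: the $\G^\C$-stabilizer at $(A,\Phi)$ is $K^\C$, the complexification of the compact group $K$, hence complex reductive. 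Because $K^\C$ fixes the basepoint $(A,\Phi)$ it acts linearly on the deformation space $\bH^1$, and since the whole $\G^\C$-action on $\sC$ preserves $\Omega_\C$, the restricted action preserves $\Omega_\C|_{\bH^1}$. The canonical moment map $\nu_{0,\C}$ with $\nu_{0,\C}(0)=0$ is precisely the one built in the discussion just before the theorem, where it was identified with $\frac{1}{2}H[\cdot,\cdot]$.

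For (3) I would first reduce to a purely local comparison of two quotients. By Theorem~\ref{sec:gluing-local-models-complex-space-structure} together with Corollary~\ref{sec:kuran-local-models-Cinfty-Sobolev-homeo}, the chart $\varphi$ identifies a neighborhood of $[A,\Phi]$ in $\M$ biholomorphically with a neighborhood of $[0]$ in the Kuranishi local model $\Z K^\C\sslash K^\C$. By Corollary~\ref{sec:sing-kuran-spac-singularities} and the identification $Q=\nu_{0,\C}^{-1}(0)$, the set $\Z$ is an open, $K$-invariant, orbit-convex neighborhood of $0$ inside the affine variety $\nu_{0,\C}^{-1}(0)$; since $\nu_{0,\C}^{-1}(0)$ is $K^\C$-invariant and each $g\in K^\C$ acts as a homeomorphism, $\Z K^\C$ is an open $K^\C$-invariant subset of $\nu_{0,\C}^{-1}(0)$. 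It therefore suffices to show that the inclusion $\Z K^\C\hookrightarrow\nu_{0,\C}^{-1}(0)$ induces a biholomorphism of $\Z K^\C\sslash K^\C$ onto an open neighborhood of $[0]$ in the affine GIT quotient $\nu_{0,\C}^{-1}(0)\sslash K^\C$.

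The topological half of this is straightforward. The two Kempf--Ness homeomorphisms $\Z K^\C\sslash K^\C\cong(\nu_0^{-1}(0)\cap\Z K^\C)/K$ and $\nu_{0,\C}^{-1}(0)\sslash K^\C\cong(\nu_0^{-1}(0)\cap\nu_{0,\C}^{-1}(0))/K$ --- the first from the analytic GIT of \cite{heinzner1994reduction} used in Section~\ref{sec:kuran-local-models}, the second recorded before the statement --- reduce the claim to the observation that $\nu_0^{-1}(0)\cap\Z K^\C$ is an open, $K$-invariant subset of $\nu_0^{-1}(0)\cap\nu_{0,\C}^{-1}(0)$. Since $K$ is compact the quotient map $X\to X/K$ is open, so the induced map is an open topological embedding onto a neighborhood of $[0]$. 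The equivariant holomorphic inclusion then induces, by the universal property of the analytic Hilbert quotient, a holomorphic map $q\colon\Z K^\C\sslash K^\C\to\nu_{0,\C}^{-1}(0)\sslash K^\C$ realizing this embedding.

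The hard part will be promoting $q$ from a holomorphic homeomorphism onto its image to a genuine biholomorphism, i.e.\ matching the two analytic structures; since neither quotient need be normal, this cannot be read off from the topology alone. Here I would invoke the comparison, due to Heinzner and Loose \cite{heinzner1994reduction}, that the affine GIT quotient $\nu_{0,\C}^{-1}(0)\to\nu_{0,\C}^{-1}(0)\sslash K^\C$ is itself an analytic Hilbert quotient, so that both source and target of $q$ are analytic Hilbert quotients whose structure sheaves are the invariant holomorphic functions; for a $\pi$-\emph{saturated} open $K^\C$-invariant subset the induced map is an isomorphism onto an open subspace. To supply the needed saturation near $[0]$ I would reuse the orbit-convexity of $\Z$ (\cite[Lemma 1.14]{sjamaar1995holomorphic}) and the saturation argument already carried out in the proof of Theorem~\ref{sec:kuran-local-models-local-charts} and Lemma~\ref{sec:gluing-local-models-saturation}, shrinking $\Z$ to a $\pi$-saturated orbit-convex neighborhood. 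This yields the biholomorphism, and composing with $\varphi^{-1}$ completes the proof of (3).
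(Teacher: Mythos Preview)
Your proposal is correct and rests on the same core idea as the paper---identifying $\Z K^\C\sslash K^\C$ with an open subspace of $Q\sslash K^\C=\nu_{0,\C}^{-1}(0)\sslash K^\C$ via saturation and the analytic Hilbert quotient machinery of \cite{heinzner1994reduction}---but the paper's argument is considerably shorter. The paper simply observes that $\Z K^\C$ is \emph{already} saturated for $Q\to Q\sslash K^\C$, without any shrinking: since $\Z=Q\cap B$ with $B$ an open ball and $Q$ is $K^\C$-invariant, one has $\Z K^\C=Q\cap BK^\C$; and the gradient flow of $\|\nu_0\|^2$ starting at any $y\in\Z$ stays in $B$, so the closed orbit in each fiber meets $B$, forcing any $x$ with $\pi(x)=\pi(y)$ to satisfy $xK^\C\cap B\neq\emptyset$. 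Saturation then gives both the openness and the biholomorphism in one stroke. Your route---first a topological open embedding via the two Kempf--Ness homeomorphisms, then a holomorphic map from the universal property, then a separate shrinking step---reaches the same conclusion but with redundant stages: the Kempf--Ness detour is subsumed by the saturation statement, and no shrinking is actually needed.
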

\begin{proof}
  It remains to show $(3)$. Since $\Z$ is open in $Z$ which is also
  open in $Q$, we have $\Z K^\C$ is open in $Q$. Since $\Z K^\C$ is
  saturated with respect to the quotient $Q\to Q\sslash K^\C$,
  $\Z K^\C\sslash K^\C$ is an open neighborhood of $[0]$ in
  $Q\sslash K^\C$. The rest follows from
  Theorem~\ref{sec:kuran-local-models-local-charts}
  and~\ref{sec:gluing-local-models-complex-space-structure}.
\end{proof}

\section{The Isomorphism between the analytic and the algebraic
  constructions}\label{sec:comp-betw-analyt}
Let $\M_{an}$ be the moduli space $\B^{ps}/\G^\C$ and $\M_{alg}$ the
coarse moduli space of the semistable Higgs bundles of rank $r$ and
degree $0$, where $r$ is the rank of $E$. By \cite[Theorem 4.7,
Theorem 11.1]{Simpson1994}, $\M_{alg}$ is a normal irreducible
quasi-projective variety. By abusing the notation, we also use
$\M_{alg}$ to mean its analytification. Then, there is a natural
comparison map
\begin{equation*}
  i\colon \M_{an}\to\M_{alg}\qquad[A,\Phi]\mapsto[\E_A,\Phi]_s
\end{equation*}
Here, $(\E_A,\Phi)$ is the Higgs bundle determined by $(A,\Phi)$, and
$[\E_A,\Phi]_s$ means the s-equivalence class of $(\E_A,\Phi)$. We
will prove Theorem~\ref{sec:introduction-comparison} in this
section. By \cite[Proposition 5.1]{Wilkin2006}, we see that $i$ is a
bijection of sets. 

\subsection{Continuity}
The first step toward our goal is to show that $i$ is a
homeomorphism. To this end, we need some preparations. First, we may
assume that the degree of $E$ is sufficiently large. This can be
arranged as follows. Fix a holomorphic line bundle
$\sL=(L,\bar{\partial}_L)$ of degree $d>0$. Here, $L$ is the
underlying smooth line bundle of $\sL$, and $\bar{\partial}_L$ is the
$\bar{\partial}$-operator defined by the holomorphic structure on
$\sL$. We may also fix a Hermitian metric on $L$ so that the Chern
connection of $\bar{\partial}_L$ is $d_L$. Then, there is a map
\begin{equation*}
  \B(E)\to\B(E\otimes L)\qquad (A,\Phi)\mapsto(A\otimes1+1\otimes d_L,\Phi\otimes1)
\end{equation*}
Here, $\B(E)$ and $\B(E\otimes L)$ are the configuration spaces of
Higgs bundles with underlying smooth bundles $E$ and $E\otimes L$,
respectively. Since $(\E,\Phi)$ is (semi)stable if and only if
$(\E\otimes\sL,\Phi)$ is (semi)stable, this map restricts to a map
\begin{equation*}
  \B(E)^{ps}\to\B(E\otimes L)^{ps}
\end{equation*}
and eventually descends to a homeomorphism (in the
$C^\infty$-topology)
\begin{equation*}
  \M_{an}\xrightarrow{\otimes\sL}\M_{an}(rd)
\end{equation*}
where $\M_{an}(rd)=\B^{ps}(E\otimes L)^{ps}/\Aut(E\otimes L)$, and
$rd$ is the degree of $E\otimes L$. On the other hand, there is a
homeomorphism (in the analytic topology) $\M_{alg}\to\M_{alg}(rd)$
given by tensoring by $\sL$. Here, $\M_{alg}(rd)$ is the moduli space
of the semistable Higgs bundles of rank $r$ and degree $rd$ in the
category of schemes. Finally, these maps fit into the following
commutative diagram
\begin{equation*}
  \xymatrix
  {
    \M_{an}\ar[r]^-i\ar[d]^-{\otimes\sL} &\M_{alg}\ar[d]^-{\otimes\sL}\\
    \M_{an}(rd)\ar[r]^-i &\M_{alg}(rd)
  }
\end{equation*}
Therefore, the bottom map is a homeomorphism if and only if the top
one is a homeomorphism.

Now, let us recall Nitsure's construction of $\M_{alg}$ in
\cite{nitsure1991moduli}. By the previous paragraph, we may assume
that the degree $d$ of $E$ is sufficiently large so that if
$(\E_A,\Phi)$ is a semistable Higgs bundle defined by $(A,\Phi)\in\B$
then $\E_A$ is generated by global sections and $H^1(X,\E_A)=0$. Let
$p=d+r(1-g)$ and $Q$ be the Quot scheme parameterizing isomorphism
classes of quotients $\sO_X^p\to\E\to0$, where $\E$ is a coherent
sheaf on $X$ with rank $r$ and degree $d$, and $\sO_X$ is the
structure sheaf of $X$. Let $\sO_{X\times Q}^p\to\U\to0$ be the
universal quotient sheaf on $X\times Q$, and $R\subset Q$ be the
subset of all $q\in Q$ such that
\begin{enumerate}
\item the sheaf $\U_q$ is locally free, and
  
\item the map $H^0(X,\sO_X^p)\to H^0(X,\U_q)$ is an isomorphism.
\end{enumerate}
It is shown that $R$ is open in $Q$. Moreover, Nitsure constructed a
linear scheme $F$ over $R$ such that closed points in $F_q$ correspond
to Higgs fields on $\U_q$ for any $q\in Q$. Let $F^{ss}$ denote the
subset of $F$ consisting of semistable Higgs bundles
$(\sO_X^p\to\E\to0,\Phi)$. It is open in $F$. Moreover, the group
$PGL(p)$ acts on $Q$, and the action lifts to $F$. Finally, Nitsure
showed that the good quotient of $F^{ss}$ by the group $PGL(p)$
exists and is the moduli space $\M_{alg}$.

Following \cite{sibley2019continuity}, if $U$ is an open subset of
$\B^{ss}$ (in the $C^\infty$-topology), a map
$\sigma\colon U\to F^{ss}$ is called a \textit{classifying map} if
$\sigma(A,\Phi)$ is a Higgs bundle isomorphic to $(\E_A,\Phi)$.
\begin{lemma}\label{sec:cont-comp-map-clas-map-exists}
  Fix $(A_0,\Phi_0)\in\B^{ss}$. There exists an open neighborhood $U$
  of $(A_0,\Phi_0)$ in $\B^{ss}$ in the $C^\infty$-topology such that
  a classifying map $\sigma\colon U\to F^{ss}$ exists and is
  continuous with respect to the analytic topology on $F^{ss}$.
\end{lemma}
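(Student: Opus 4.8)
The plan is to build $\sigma$ by equipping each nearby bundle $\E_A$ with a \emph{framing} that varies continuously and then adjoining the Higgs field. Recall that a point of $R$ is a quotient $\sO_X^p\to\E\to0$ for which $H^0(\sO_X^p)\to H^0(\E)$ is an isomorphism, and a point of $F$ over it is a Higgs field on $\E$. So for $(A,\Phi)$ near $(A_0,\Phi_0)$ I would choose a continuously varying basis $s_1(A),\dots,s_p(A)$ of $H^0(\E_A)=\ker\bar{\partial}_A$. Since the degree of $E$ is large, $\E_A$ is globally generated, so the evaluation map $\sO_X^p\to\E_A$, $(f_i)\mapsto\sum_i f_i\,s_i(A)$, is surjective; by construction it induces an isomorphism $H^0(\sO_X^p)\xrightarrow{\sim}H^0(\E_A)$ carrying the standard basis to the $s_i(A)$, so its class $q(A)$ satisfies both requirements defining $R$ and hence lies in $R$. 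Defining $\sigma(A,\Phi)$ to be the point of $F$ over $q(A)$ given by $\Phi\in H^0(\End\E_A\otimes\K_X)$ produces a set-theoretic classifying map; as $(\E_A,\Phi)$ is semistable it lands in $F^{ss}$.

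The first step is the continuous framing, which is standard Kodaira--Spencer theory. In the Sobolev completions, $\bar{\partial}_A\colon\Omega^0(E)_k\to\Omega^{0,1}(E)_{k-1}$ is a continuous family of Fredholm operators of index $p$. At $A_0$ we have $H^1(\E_{A_0})=0$, i.e.\ $\bar{\partial}_{A_0}$ is surjective; by upper semicontinuity of $\dim H^1(\E_A)$ this persists for $A$ near $A_0$, so $\dim\ker\bar{\partial}_A=p$ is locally constant. For $A$ close to $A_0$ the $L^2$-orthogonal projection onto $\ker\bar{\partial}_{A_0}$ restricts to an isomorphism $\ker\bar{\partial}_A\to\ker\bar{\partial}_{A_0}$ depending continuously on $A$; pulling a fixed basis of $\ker\bar{\partial}_{A_0}$ back through its inverse gives the frame $s_i(A)$, which is smooth by elliptic regularity. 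This presents no difficulty.

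The crux is continuity into the \emph{analytic} topology on $F^{ss}$: since $U$ carries the $C^\infty$-topology and is not a complex space, the analytic universal property of the Quot space is unavailable and continuity must be verified directly. Here I would invoke Grothendieck's embedding. Fixing an ample $\sO_X(1)$ and $m\gg0$ (uniform near $A_0$ by boundedness of the family, after shrinking $U$), the Quot scheme sits as a closed subscheme of the Grassmannian $\mathrm{Gr}$ of subspaces of the \emph{fixed} finite-dimensional space $H^0(\sO_X(m))^{\oplus p}=H^0(\sO_X^p(m))$, a point $q$ mapping to $\ker\!\big(H^0(\sO_X^p(m))\to H^0(\U_q(m))\big)$. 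Under this embedding $q(A)$ goes to the kernel $W(A)$ of the linear map $(g_i)\mapsto\sum_i g_i\,s_i(A)$ from $H^0(\sO_X(m))^{\oplus p}$ into $\Omega^0(E\otimes\sO_X(m))_k$. This map varies continuously with $A$ through the $s_i(A)$ and has constant rank (since $\dim H^0(\E_A(m))$ is constant), so $A\mapsto[W(A)]$ is continuous into $\mathrm{Gr}$; since the analytification of a closed immersion is a topological embedding, $A\mapsto q(A)$ is continuous into $R$. The Higgs coordinate is handled identically: $F$ is a linear scheme over $R$ whose fibre over $q(A)$ is $H^0(\End\E_A\otimes\K_X)$, and reading $\Phi$ against the continuously varying frame makes $(A,\Phi)\mapsto\sigma(A,\Phi)$ continuous. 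The main obstacle is exactly this passage from the $C^\infty$- to the analytic topology, and it is overcome by combining the Grassmannian embedding with the continuity of kernels of the elliptic family $\bar{\partial}_A$; this is the step where I would follow the estimates in \cite[Theorem 6.1]{sibley2019continuity}.
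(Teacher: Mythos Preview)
Your construction of the classifying map is essentially identical to the paper's: both identify $\ker\bar{\partial}_A$ with $V_0=\ker\bar{\partial}_{A_0}$ via the $L^2$-orthogonal projection $\pi_A$ and use the resulting continuously varying frame to produce the quotient $\sO_X^p\to\E_A$.

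The continuity argument, however, follows a genuinely different route. The paper does \emph{not} use Grothendieck's Quot-to-Grassmannian embedding via $H^0(\sO_X^p(m))$. Instead it invokes Nitsure's embedding: for each $x\in X$ there is a morphism $\tau_x\colon F\to\End H$ (with $H$ the universal quotient over the finite Grassmannian $G(p,r)$) given by $(\sO_X^p\to\E,\Phi)\mapsto(\C^p\to E_x,\Phi_x)$, and for finitely many points $x_1,\dots,x_N$ the resulting $\tau\colon F^{ss}\to(\End H)^N$ is an injective proper morphism, hence a closed embedding in the analytic topology. Continuity then reduces to a pointwise estimate $|(\pi_{A_1}^{-1}-\pi_{A_2}^{-1})s(x)|\leq C\|a_2-a_1\|_{C^\infty}\|s\|_{L_k^2}$, proved directly from Sobolev embedding and the formula $\tilde{\pi}_A s=s+\bar{\partial}_{A_0}^*G_0(as)$. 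Your Grothendieck-embedding argument is correct and more in the spirit of the general theory, but the paper's approach is more concrete: it avoids twisting by $\sO_X(m)$, makes the estimates completely explicit, and handles the Higgs-field coordinate simultaneously (as $\Phi_x\in\End E_x$) without the somewhat vague ``reading $\Phi$ against the frame'' step you allude to. Note also that the reference \cite[Theorem 6.1]{sibley2019continuity} you cite for the estimates is precisely the source of the paper's argument, and that argument uses Nitsure's $\tau$, not the Grothendieck embedding; so your final sentence is slightly at odds with the method you outline.
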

Before giving the proof, we first show that how it implies the
continuity of $i$.
\begin{corollary}\label{sec:continuity-homeo}
  The comparison map $i\colon\M_{an}\to\M_{alg}$ is a homeomorphism.
\end{corollary}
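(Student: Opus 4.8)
The plan is to prove continuity of $i$ first, using Lemma~\ref{sec:cont-comp-map-clas-map-exists}, and then to upgrade the continuous bijection $i$ (bijectivity being already established) to a homeomorphism by showing that it is proper, using the Hitchin fibration.

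First I would settle continuity. Since the quotient map $\Pi\colon\B^{ps}\to\M_{an}$ is a topological quotient map, $i$ is continuous if and only if the composite $i\circ\Pi\colon\B^{ps}\to\M_{alg}$, $(A,\Phi)\mapsto[\E_A,\Phi]_s$, is continuous; and continuity being local, it suffices to check it near each $(A_0,\Phi_0)\in\B^{ps}\subset\B^{ss}$. By Lemma~\ref{sec:cont-comp-map-clas-map-exists} there is an open neighborhood $U$ of $(A_0,\Phi_0)$ and a continuous classifying map $\sigma\colon U\to F^{ss}$. Composing with the good quotient $q\colon F^{ss}\to\M_{alg}$, which is a morphism of varieties and hence continuous for the analytic topology, the map $q\circ\sigma$ sends $(A,\Phi)$ to the s-equivalence class of $\sigma(A,\Phi)\cong(\E_A,\Phi)$, i.e.\ to $[\E_A,\Phi]_s$. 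Here one uses that $q$ identifies two points of $F^{ss}$ exactly when their $PGL(p)$-orbit closures meet, and that this relation on $F^{ss}$ coincides with s-equivalence. Restricting to $U\cap\B^{ps}$ shows that $i\circ\Pi$ agrees with the continuous map $q\circ\sigma$ on an open neighborhood of $(A_0,\Phi_0)$, so $i\circ\Pi$, and therefore $i$, is continuous.

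Next I would show that $i$ is proper. The Hitchin maps fit into a commuting triangle $h_{alg}\circ i=h$, where $h\colon\M_{an}\to\mathcal{B}_{Hit}$ and $h_{alg}\colon\M_{alg}\to\mathcal{B}_{Hit}$ send a Higgs bundle to the characteristic coefficients of its Higgs field, valued in the Hitchin base $\mathcal{B}_{Hit}=\bigoplus_{j=1}^{r}H^0(X,\K_X^{j})$. The analytic Hitchin map $h$ is proper, and $\M_{alg}$ is Hausdorff. A standard topological argument then forces $i$ to be proper: for compact $K\subset\M_{alg}$ one has $i^{-1}(K)\subset(h_{alg}\circ i)^{-1}(h_{alg}(K))=h^{-1}(h_{alg}(K))$, and the right-hand side is compact since $h_{alg}(K)$ is compact and $h$ is proper; as $K$ is closed (Hausdorffness) and $i$ is continuous, $i^{-1}(K)$ is a closed subset of this compact set, hence compact.

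Finally, having a continuous proper bijection $i$ between the locally compact Hausdorff spaces $\M_{an}$ and $\M_{alg}$ (both are complex spaces, $\M_{an}$ by Theorem~\ref{sec:gluing-local-models-complex-space-structure}), I would conclude that $i$ is a homeomorphism: a continuous proper map into a locally compact Hausdorff space is closed, and a continuous closed bijection is a homeomorphism. The main obstacle here is not the formal topology but the external inputs. The classifying-map Lemma~\ref{sec:cont-comp-map-clas-map-exists} is what makes continuity work, while the properness of the analytic Hitchin map $h$ is what rules out escape to infinity along the fibers and thereby promotes the continuous bijection to a homeomorphism; I expect verifying (or correctly citing) the properness of $h$ to be the step demanding the most care.
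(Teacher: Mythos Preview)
Your proposal is correct and follows essentially the same approach as the paper: continuity of $i$ via the local classifying maps of Lemma~\ref{sec:cont-comp-map-clas-map-exists} composed with the good quotient $F^{ss}\to\M_{alg}$, and then properness via the commuting triangle $h_{alg}\circ i=h_{an}$ together with the properness of the analytic Hitchin map, concluding that a proper continuous bijection into a locally compact Hausdorff space is a homeomorphism. The paper's proof is organized the same way and cites the same inputs (Wilkin for bijectivity, Hitchin/Wentworth for properness of $h_{an}$).
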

\begin{proof}
  Fix $[A_0,\Phi_0]\in\M_{an}$ such that $(A_0,\Phi_0)\in\B^{ps}$. By
  Lemma~\ref{sec:cont-comp-map-clas-map-exists}, there exists an open
  neighborhood $U$ of $(A_0,\Phi_0)$ such that a continuous
  classifying map $\sigma\colon U\to F^{ss}$ exists. Composed with the
  categorical quotient $F^{ss}\to\M_{alg}$, which is continuous in the
  analytic topology, we obtain a continuous map $U\to\M_{alg}$. By
  construction, it descends to the restriction of $i$ to the open set
  $\pi(U)$, where $\pi\colon\B^{ps}\to\M_{an}$ is the quotient map.

  To see that $i$ is a homeomorphism, we show that it is proper. Since
  $\M_{alg}$ is locally compact in the analytic topology, $i$ is a
  closed map and hence a homeomorphism. Let $h_{an}$ and $h_{alg}$ be
  the Hitchin fibrations defined on $\M_{an}$ and $\M_{alg}$,
  respectively. Then, we have $h_{alg}i=h_{an}$. It is known that
  $h_{an}$ is a proper map (see \cite[Theorem 8.1]{Hitchin1987b} or
  \cite[Theorem 2.15]{wentworth2016higgs}). As a consequence, if $K$
  is a compact subset in $\M_{alg}$ in the analytic topology, then
  $i^{-1}(K)\subset h_{an}^{-1}h_{alg}(K)$. Since $h_{alg}$ is
  continuous, $h_{alg}(K)$ is compact and hence
  $h_{an}^{-1}h_{alg}(K)$ is compact by the properness of
  $h_{an}$. Since $\M_{alg}$ is a separated scheme, $\M_{alg}$ is
  Hausdorff in the analytic topology. Hence, $K$ is closed and
  $i^{-1}(K)$ is also closed and contained in a compact
  set. Therefore, $i^{-1}(K)$ is compact.
\end{proof}
\begin{proof}[Proof of Lemma~\ref{sec:cont-comp-map-clas-map-exists}]
  The proof is essentially taken from that of \cite[Theorem
  6.1]{sibley2019continuity}. We first show that a classifying map
  $\sigma$ exists and then prove its continuity. Let
  $V_0=\ker\bar{\partial}_{A_0}\subset\Omega^0(E)$. By definition of
  $\bar{\partial}$-operators, $V_0=H^0(X,\E_A)$. Since $H^1(\E_A)=0$,
  the Riemann-Roch theorem implies that $\dim V_0=p$. Hence, by
  choosing a basis for $V_0$, we may identify $V_0$ with
  $\C^p$. Moreover, since $\E_{A_0}$ is generated by global sections,
  the evaluation map
  \begin{equation*}
    X\times V_0\to\E_A\qquad (x,s)\mapsto s(x)
  \end{equation*}
  realizes $\E_{A_0}$ as a quotient of
  $V_0\otimes\sO_X\cong\sO_X^p$. Let $(A,\Phi)$ be another point in
  $\B^{ss}$, and consider the map defined by the composition
  \begin{equation*}
    \pi_A\colon V_A=\bar{\partial}_A\hookrightarrow\Omega^0(E)\to V_0
  \end{equation*}
  where $\Omega^0(E)\to V_0$ is given by the harmonic projection
  defined in the following elliptic complex
  \begin{equation*}
    C(A_0)\colon\Omega^0(E)\xrightarrow{\bar{\partial}_{A_0}}\Omega^{0,1}(E)
  \end{equation*}
  We claim that there exists an open neighborhood $U$ of
  $(A_0,\Phi_0)$ such that $\pi_A$ is an isomorphism for every
  $(A,\Phi)\in U$. Write $\pi_A(s)=s+u_s$ for some $u_s\in V_0^\perp$
  and $\bar{\partial}_A=\bar{\partial}_{A_0}+a$ for some
  $a\in\Omega^{0,1}(\g_E^\C)$. Let $G_0$ be the Green operator in the
  elliptic complex $C(A_0)$. Since $u_s\in V_0^\perp$,
  \begin{equation*}
    u_s=\bar{\partial}_{A_0}^*\bar{\partial}_{A_0}G_0u_s=\bar{\partial}_{A_0}^*G_0\bar{\partial}_{A_0}u_s=\bar{\partial}_{A_0}^*G_0(-\bar{\partial}_{A_0}s)=\bar{\partial}_{A_0}^*G_0(as)
  \end{equation*}
  Hence, $\pi_A$ has a natural extension
  \begin{equation*}
    \tilde{\pi}_A\colon\Omega^0(E)\to\Omega^0(E)\qquad s\mapsto s+\bar{\partial}_{A_0}^*G_0(as)
  \end{equation*}
  satisfying the following estimate
  \begin{equation*}
    \|\bar{\partial}_{A_0}^*G_0(as)\|_{L_k^2}\leq
    C\|as\|_{L_{k-1}^2}\leq C\|as\|_{L_k^2}\leq
    C\|a\|_{L_k^2}\|s\|_{L_k^2}\leq C\|a\|_{C^\infty}\|s\|_{L_k^2}
  \end{equation*}
  where we have used the Sobolev multiplication theorem (see \cite[Theorem
  4.4.1]{friedman1998gauge}). Therefore, if $A_1,A_2\in\B^{ss}$ and
  $\bar{\partial}_{A_i}=\bar{\partial}_{A_0}+a_i$ for some
  $a_i\in\Omega^{0,1}(E)$, we have
  \begin{equation}
    \label{eq:pi-bounded}
    \|(\tilde{\pi}_{A_2}-\tilde{\pi}_{A_1})s\|_{L_k^2}=\|\bar{\partial}_{A_0}^*G_0(a_2-a_1)s\|_{L_k^2}\leq C\|a_2-a_1\|_{C^\infty}\|s\|_{L_k^2}
  \end{equation}
  Now if $U$ is sufficiently small, we may assume that
  \begin{equation*}
    \|\bar{\partial}_{A_0}^*G_0(as)\|_{L_k^2}\leq(1/2)\|s\|_{L_k^2}
  \end{equation*}
  so that
  \begin{equation}
    \label{eq:pi-injective}
    \|\tilde{\pi}_As\|_{L_k^2}\geq(1/2)\|s\|_{L_k^2}
  \end{equation}
  This shows that $\tilde{\pi}_A$ is injective. Since $H^1(\E_A)=0$,
  $\dim V_A=\dim V_0=p$, $\pi_A$ is an isomorphism. Therefore, the map
  \begin{equation*}
    X\times V_0\xrightarrow{1\times\pi^{-1}_A}X\times
    V_A\xrightarrow{(x,s)\mapsto s(x)}\E_A
  \end{equation*}
  realizes $\E_A$ as a quotient of $V_0\otimes\sO_X\cong\sO_X^p$,
  since $\E_A$ is generated by global sections. As a consequence, the
  classifying map
  \begin{equation*}
    \sigma\colon U\to F^{ss}\qquad (A,\Phi)\mapsto(\sO_X^p\to\E_A\to0,\Phi)
  \end{equation*}
  is well-defined.

  Now, we show that $\sigma$ is continuous. Let $G(p,r)$ be the
  Grassmannian parameterizing isomorphism classes of quotients
  $\C^p\to V\to0$, where $V$ is a vector space of dimension $r$. Over
  $G(p,r)$, there is a universal quotient bundle $H\to G(p,r)$. Fix $x\in X$ and
  choose a basis for the fiber $(\K_X)_x$ of the canonical bundle
  $\K_X$ over $x$. Therefore, any Higgs field
  $\Phi\in H^0(\End\E\otimes\K_X)$ induces an endomorphism
  $\Phi_x\colon E_x\to E_x\otimes(\K_X)_x\cong E_x$. Then, Nitsure
  showed in \cite{nitsure1991moduli} that there is a morphism
  \begin{equation*}
    \tau_x\colon F\to\End H\qquad
    (\sO_X^p\to\E_A\to0,\Phi)\mapsto(\C^p\to E_x,\Phi_x\colon E_x\to E_x)
  \end{equation*}
  where $\C^p\to E_x$ is obtained by evaluating the map
  $\sO_X^p\to\E_A$ at $x$. Moreover, \cite[Proposition
  5.7]{nitsure1991moduli} states that there are $N$ points
  $x_1,\cdots, x_N\in X$ such that $\{\tau_{x_i}\}$ induces an
  injective and proper morphism (in the category of schemes)
  $ \tau\colon F^{ss}\to W $ for some open subset $W$ of $(\End
  H)^N$. Therefore, the underlying continuous map of $\tau$ is a
  closed embedding with respect to the analytic topology. Hence,
  $\sigma$ is continuous if the composition
  \begin{equation*}
    \sigma_x\colon U\xrightarrow{\sigma} F^{ss}\xrightarrow{\tau_x}\End H
  \end{equation*}
  is continuous for any $x\in X$. More explicitly, $\sigma_x$ is given
  by
  \begin{equation*}
    (A,\Phi)\mapsto(V_0\to E_x\to0,\Phi_x\colon E_x\to E_x)
  \end{equation*}
  where $V_0\to E_x$ is defined by
  \begin{equation*}
    V_0\xrightarrow{\pi_A^{-1}}V_A\xrightarrow{s\mapsto s(x)} E_x
  \end{equation*}
  Clearly, the map $\Phi\mapsto\Phi_x$ is continuous. It suffices to
  show that
  \begin{equation*}
    A\mapsto (V_0\to E_x\to0)
  \end{equation*}
  is continuous. Fix $s\in V_0$ and $A_1,A_2\in U$. Write
  $\bar{\partial}_{A_i}=\bar{\partial}_{A_0}+a_i$ for some
  $a_i\in\Omega^{0,1}(E)$ ($i=1,2$). Then, the following estimate
  follows from \eqref{eq:pi-bounded}, \eqref{eq:pi-injective}, and
  Sobolev embedding $L_k^2\hookrightarrow C^0$,
  \begin{align*}
    |(\pi_{A_1}^{-1}-\pi_{A_2}^{-1})s(x)|
    &\leq\|(\pi_{A_1}^{-1}-\pi_{A_2}^{-1})s\|_{C^0}\\
    &\leq C\|(\pi_{A_1}^{-1}-\pi_{A_2}^{-1})s\|_{L_k^2}\\
    &\leq
      C\|\tilde{\pi}_{A_1}^{-1}(s-\tilde{\pi}_{A_1}\tilde{\pi}_{A_2}^{-1}s)\|_{L_k^2}\\
    &\leq C\|s-\tilde{\pi}_{A_1}\tilde{\pi}_{A_2}^{-1}s\|_{L_k^2}\\
    &=C\|(\tilde{\pi}_{A_2}-\tilde{\pi}_{A_1})\pi_{A_2}^{-1}s\|_{L_k^2}\\
    &\leq C\|a_2-a_1\|_{C^\infty}\|\tilde{\pi}_{A_2}^{-1}s\|_{L_k^2}\\
    &\leq C\|a_2-a_1\|_{C^\infty}\|s\|_{L_k^2}
  \end{align*}
  Hence, $A\mapsto(V_0\to E_x\to0)$ is continuous.
\end{proof}

\subsection{Holomorphicity}
We continue to show that the comparison map $i$ is a
biholomorphism. Let $\M_{an}^s$ and $\M_{alg}^s$ be the subset of
$\M_{an}^s$ and $\M_{alg}^s$ consisting of stable Higgs bundles. We
first show that the restriction $i\colon\M_{an}^s\to\M_{alg}^s$ is a
biholomorphism. By \cite[Theorem 4.7]{Simpson1994}, $\M_{alg}^s$ is
open in $\M_{alg}$. By \cite[Corollary 11.7]{Simpson1994} and \cite[Proposition
7.1]{nitsure1991moduli}, we see that $\M_{alg}^s$ is smooth. On the
other hand, a polystable Higgs bundle $(A,\Phi)$ is stable if and only
if its $\G^\C$-stabilizer is equal to $\C^*$ or equivalently
$\dim\bH^0(C_{\mu_\C}(A,\Phi))=1$. Since $\C^*$ is contained in every
$\G^\C$-stabilizer, by the upper semicontinuity of dimensions of
cohomology (see \cite[Chapter VII, (2.37)]{Kobayashi2014}), we
conclude that $\M_{an}^s$ is open in $\M_{an}$.
\begin{proposition}\label{sec:holomorphicity-smooth-Ms}
  $\M_{an}^s$ is a smooth submanifold of $\M_{an}$
\end{proposition}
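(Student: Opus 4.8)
The plan is to read off smoothness from the local model of Theorem~\ref{sec:sing-kuran-spac-local-model}. Fix $[A,\Phi]\in\M_{an}^s$; by the Hitchin-Kobayashi correspondence we may take a representative with $\mu(A,\Phi)=0$, so that the structure of $\M$ near $[A,\Phi]$ is controlled by the linear $K^\C$-action on $\bH^1$, where $K^\C$ is the $\G^\C$-stabilizer. By the stability criterion recalled just above, $K^\C=\C^*$ consists of the scalars $\lambda\cdot\mathrm{Id}$, and $\bH^0(C_{\mu_\C})=\C\cdot\mathrm{Id}$ is one-dimensional.

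First I would observe that this $K^\C$-action on $\bH^1$ is \emph{trivial}. The $\G^\C$-action on $\sC$ induces, at the fixed point $(A,\Phi)$, the conjugation action on $\Omega^{0,1}(\g_E^\C)\oplus\Omega^{1,0}(\g_E^\C)$, under which a scalar $\lambda\cdot\mathrm{Id}$ acts as the identity; hence $K^\C$ acts trivially on the harmonic subspace $\bH^1$. In particular $x\cdot\xi=0$ for every $x\in\bH^1$ and every $\xi\in\bH^0(C_{\mu_\C})$, so the canonical complex moment map satisfies $\langle\nu_{0,\C}(x),\xi\rangle=\tfrac12\Omega_\C(x\cdot\xi,x)=0$. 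Since the pairing $\bH^0(C_{\mu_\C})\times\bH^2(C_{\mu_\C})\to\C$ is non-degenerate, $\nu_{0,\C}\equiv0$; equivalently, the quadratic cone $Q=\{x\in\bH^1\colon\tfrac12 H[x,x]=0\}$ of Corollary~\ref{sec:sing-kuran-spac-singularities} is all of $\bH^1$.

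It then follows immediately that the local model is smooth. Because $\nu_{0,\C}\equiv0$ we have $\nu_{0,\C}^{-1}(0)=\bH^1$, and because the reductive group $K^\C=\C^*$ acts trivially its affine GIT quotient is $\nu_{0,\C}^{-1}(0)\sslash K^\C=\bH^1\sslash\C^*\cong\bH^1$. By Theorem~\ref{sec:sing-kuran-spac-local-model}, $\M$ is locally biholomorphic near $[A,\Phi]$ to an open subset of the vector space $\bH^1$, hence a complex manifold there. As this holds at every stable point and $\M_{an}^s$ is open in $\M_{an}$, we conclude that $\M_{an}^s$ is a smooth submanifold. The one genuinely load-bearing step, and thus the main obstacle, is the triviality of the $K^\C$-action on $\bH^1$ (equivalently $\nu_{0,\C}\equiv0$), since it is exactly what collapses the a priori singular GIT quotient to a smooth chart; once this is established the result is purely formal.
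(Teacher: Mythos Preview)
Your proof is correct and follows essentially the same approach as the paper: both fix a stable $(A,\Phi)$ with $\mu(A,\Phi)=0$, observe that $K^\C=\C^*$ acts trivially on $\bH^1$, deduce $\nu_{0,\C}\equiv0$ so that the local model $\nu_{0,\C}^{-1}(0)\sslash K^\C$ collapses to $\bH^1$, and conclude via Theorem~\ref{sec:sing-kuran-spac-local-model}. The only minor difference is in justifying $\nu_{0,\C}\equiv0$: you argue abstractly from the moment map formula $\langle\nu_{0,\C}(x),\xi\rangle=\tfrac12\Omega_\C(x\cdot\xi,x)$ together with triviality of the infinitesimal action, whereas the paper argues directly that $\tfrac12 H[x,x]$ is trace-free while $\bH^2(C_{\mu_\C})=\C\,\omega_X$ consists of scalars, forcing $H[x,x]=0$.
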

\begin{proof}
  Fix $(A,\Phi)\in\B^s$ that satisfies Hitchin's equation. Let $K$ be
  its $\G$-stabilizer so that $K^\C$ is its $\G^\C$-stabilizer. To
  show that $\M_{an}^s$ is smooth, we will use
  Theorem~\ref{sec:introduction-local-model}. It is enough to show
  that $\nu_{0,\C}^{-1}(0)\sslash K^\C=\bH^1$. In fact, since
  $K^\C=\C^*$, $K^\C$ acts on $\bH^1$ trivially. Moreover,
  $\nu_{0,\C}(x)=\frac{1}{2}H[x,x]$ is trace-free for every
  $x\in \bH^1$. Since $\bH^2(C_{\mu_\C})=\C^*\omega_X$, we conclude
  that $H[x,x]=0$ for every $x\in\bH^1$, where $\omega_X$ is a fixed
  K\"ahler form on $X$.
\end{proof}

Fix $[A,\Phi]\in\M_{an}^s$ such that $(A,\Phi)\in\B^s$ satisfies
Hitchin's equation. By Corollary
\ref{sec:kuran-local-models-Cinfty-Sobolev-homeo} and
Proposition~\ref{sec:holomorphicity-smooth-Ms}, we see that
$\varphi\colon\Z\to\M_{an}^s$ is a biholomorphism onto an open
neighborhood of $[A,\Phi]$ in $\M_{an}^s$, where $\Z$ is an open
neighborhood of $0$ in $\bH^1$ and $\varphi$ the map induced by the
Kuranishi map $\theta\colon\Z\to\B^s$ (see
Section~\ref{sec:kuran-local-models}). Therefore, to show that
$i|_{\M_{an}^s}$ is holomorphic, it is enough to show that
$i\varphi\colon\Z\to\M_{alg}^s$ is holomorphic. By the remark after
the proof of \cite[Corollary 5.6]{Simpson1994}, we see that the
analytification of $\M_{alg}$ is the coarse moduli space of semistable
Higgs bundles in the category of complex spaces. Therefore, to show
that $i\varphi$ is holomorphic, we need to construct a family
$(\V,\mathbf{\Phi})$, call the \textit{Kuranishi family} associated
with $\theta$, of stable Higgs bundles over $\Z$ such that
$(\V_t,\mathbf{\Phi}_t)$ is isomorphic to $(\E_{A_t},\Phi_t)$ for
every $t\in\Z$, where $(A_t,\Phi_t)=\theta(t)$. In general, a family
$(\V,\mathbf{\Phi})$ of Higgs bundles over a complex space $T$ is a
holomorphic vector bundle $\V\to X\times T$ together with a
holomorphic section
$\mathbf{\Phi}\in H^0(X\times T,p_X^*\K_X\otimes\End\V)$, where
$p_X\colon X\times T\to X$ is the projection onto the first factor.
\begin{proposition}\label{sec:holomorphicity-Kuranishi-family}
  For any $(A,\Phi)\in\B^s$, let $\theta\colon\Z\to\B^s$ be the
  Kuranishi map defined by $(A,\Phi)$. Then, there exists a Kuranishi
  family $(\V,\mathbf{\Phi})$ of stable Higgs bundles over $\Z$ such
  that $(\V_t,\mathbf{\Phi}_t)$ is isomorphic to $(\E_{A_t},\Phi_t)$
  for every $t\in\Z$, where $(A_t,\Phi_t)=\theta(t)$.
\end{proposition}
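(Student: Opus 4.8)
The plan is to manufacture the family directly from the Kuranishi map, using that $\theta$ is holomorphic and that $\dim_\C X=1$. Recall that $\theta\colon U\to\sC$ is a holomorphic map on the open ball $U\subset\bH^1$, with $\Z\subset Z\subset U$, and that it takes values in $(A,\Phi)+\ker(D'')^*$. Writing $\theta(t)=(A_t,\Phi_t)$ with $\bar{\partial}_{A_t}=\bar{\partial}_A+\alpha(t)''$ and $\Phi_t=\Phi+\eta(t)$, the holomorphicity of $\theta$ says precisely that $t\mapsto\alpha(t)''$ and $t\mapsto\eta(t)$ are holomorphic maps into $\Omega^{0,1}(\g_E^\C)$ and $\Omega^{1,0}(\g_E^\C)$, respectively. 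This single fact is what makes the total space of the family holomorphic.

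I would construct the bundle first. Pull back the smooth bundle $E$ along $p_X\colon X\times U\to X$ and equip $\tilde E=p_X^*E$ with the Dolbeault operator
\[
  \mathcal D=p_X^*\bar{\partial}_A+\bar{\partial}_U+\mathbf a,
\]
where $\bar{\partial}_U$ is the trivial Dolbeault operator in the $U$-direction and $\mathbf a$ is the $(0,1)_X$-form on $X\times U$ with $\mathbf a|_{X\times\{t\}}=\alpha(t)''$. The first step is to verify integrability $\mathcal D^2=0$. Since $\dim_\C X=1$ there are no $(0,2)$-forms in the $X$-direction, so every term of $\mathcal D^2$ landing in $\Omega^{0,2}_X$ vanishes automatically (this kills $(p_X^*\bar{\partial}_A)^2$, $\mathbf a\wedge\mathbf a$, and the covariant $X$-derivative of $\mathbf a$); the term in $\Omega^{0,2}_U$ vanishes because $\bar{\partial}_U^2=0$; and the cross term $\{p_X^*\bar{\partial}_A,\bar{\partial}_U\}$ vanishes because $\bar{\partial}_A$ is pulled back and hence constant in $t$. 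The only surviving contribution lives in $\Omega^{0,1}_X\otimes\Omega^{0,1}_U$ and equals $\bar{\partial}_U\mathbf a$, which is zero exactly because $t\mapsto\alpha(t)''$ is holomorphic. Thus $\mathcal D$ is integrable and defines a holomorphic bundle $\V$ on $X\times U$; restricting to $X\times\Z$ gives the desired bundle, and by construction $\V|_{X\times\{t\}}=(E,\bar{\partial}_{A_t})=\E_{A_t}$.

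For the Higgs field, define $\mathbf\Phi$ on $X\times\Z$ by $\mathbf\Phi|_{X\times\{t\}}=\Phi_t$, a section of $p_X^*\K_X\otimes\End\tilde E$ that is holomorphic in $t$ because $\eta$ is. I would then check $\mathcal D\mathbf\Phi=0$: the $U$-direction component is $\bar{\partial}_U\mathbf\Phi=0$ by holomorphicity of $\eta$, while the $X$-direction component is $\bar{\partial}_{A_t}\Phi_t$, which vanishes for every $t\in\Z$ because $\theta(\Z)\subset\B$ means exactly $\mu_\C(\theta(t))=\bar{\partial}_{A_t}\Phi_t=0$. Hence $\mathbf\Phi\in H^0(X\times\Z,p_X^*\K_X\otimes\End\V)$ is a genuine holomorphic Higgs field with $(\V_t,\mathbf\Phi_t)=(\E_{A_t},\Phi_t)=\theta(t)$. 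Since $(A,\Phi)\in\B^s$ and stability is an open condition, after shrinking $\Z$ we may assume $\theta(\Z)\subset\B^s$, so every fiber is stable and $(\V,\mathbf\Phi)$ is the required Kuranishi family.

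The step I expect to be the main obstacle is the regularity of the total-space bundle rather than the integrability computation, which is short and conceptual. A priori $\theta$ only lands in the Sobolev completion $\sC_k$, so $\mathbf a$ is only $L^2_k$ in the $X$-variable, and one must upgrade the integrable operator $\mathcal D$ to one defining an honest holomorphic bundle over $X\times\Z$ as a complex space. Here the fiberwise smoothness furnished by Lemma~\ref{sec:kuran-local-models-regularity-1}, together with the holomorphic dependence on $t$, feeds into a family version of elliptic regularity to yield joint smoothness of $\mathbf a$ and $\mathbf\Phi$; by contrast, the vanishing of the single obstructing curvature term $\bar{\partial}_U\mathbf a$ is forced immediately by the holomorphicity of the Kuranishi map and the hypothesis $\dim_\C X=1$.
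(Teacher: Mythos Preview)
Your proof is correct and follows essentially the same approach as the paper: pull back $E$ to $X\times\Z$, equip it with the Dolbeault operator combining $\bar{\partial}_{A_t}$ in the $X$-direction with the trivial operator in the $\Z$-direction, and verify integrability and the holomorphicity of $\mathbf{\Phi}$ using the holomorphicity of $\theta$ in $t$ together with $\dim_\C X=1$. The paper carries out the same computation in a local frame rather than via your global bidegree decomposition, and it does not explicitly address the joint-regularity issue you flag at the end (it simply drops Sobolev subscripts after Lemma~\ref{sec:kuran-local-models-regularity-1}), but otherwise the two arguments coincide.
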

\begin{proof}
  We adapt the proof of \cite[Proposition
  2.6]{friedman2013smooth}. Let $V=p_X^*E$ be the smooth vector bundle
  over $X\times\Z$, and $\mathbf{\Phi}(x,t):=\Phi_t(x)$ can be
  regarded as a smooth section of
  $p_X^*\Lambda^{1,0}X\otimes\End(U)\subset\Omega^{1,0}(X\times\Z,\End
  U)$. Then, we need to put a holomorphic structure on $V$ so that
  $\mathbf{\Phi}$ is a holomorphic section.
  
  Let $\{s_i\}$ be a smooth local frame for $E$. Then $\{p_X^*s_i\}$
  is a smooth local frame for $V$. Then, we define a
  $\bar{\partial}$-operator
  $\bar{\partial}_V\colon\Omega^0(V)\to\Omega^{0,1}(V)$ by the
  requirement that
  \begin{equation*}
    \bar{\partial}_V(p_X^*s_i)=\bar{\partial}_{A_t}s_i
  \end{equation*}
  Here, $\bar{\partial}_{A_t}s_i$ is regarded as a local section of
  $\Lambda^{0,1}(X\times\Z)\otimes V$. It is easy to show that
  $\bar{\partial}_V$ is independent of the choices of smooth local
  frames $\{s_i\}$. Therefore, $\bar{\partial}_V$ is a well-defined
  $\bar{\partial}$-operator on $V$.

  Then, we show that $\bar{\partial}_V$ is integrable so that
  $\V=(V,\bar{\partial}_V)$ is a holomorphic vector bundle over
  $X\times\Z$. Write $\bar{\partial}_{A_t}s_i=f_i^js_j$ for some
  smooth local function $f_i^j$ on $X\times\Z$. Since $\theta$ is
  holomorphic, each $f_i^j$ is holomorphic in the direction of
  $\Z$. As a consequence,
  \begin{equation*}
    \bar{\partial}_V^2(p_X^*s_i)=\bar{\partial}_{X\times\Z}f_i^j\wedge
    s_j+f_i^j\bar{\partial}_{A_t}s_j=\bar{\partial}_Xf_i^j\wedge s_j+f_i^j\bar{\partial}_{A_t}s_j
  \end{equation*}
  where $\bar{\partial}_{X\times\Z}$ and $\bar{\partial}_X$ are usual
  $\bar{\partial}$-operator on complex manifolds $X\times\Z$ and $X$,
  respectively. On the other hand,
  \begin{equation*}
    0=\bar{\partial}_{A_t}^2s_i=\bar{\partial}_Xf_i^j\wedge s_j+f_i^j\bar{\partial}_{A_t}s_j
  \end{equation*}

  Then, we show that $\bar{\partial}_V\mathbf{\Phi}=0$. Write
  $\Phi_s=\phi^is_i$ for some smooth local function $\phi^i$ on
  $X\times\Z$. Since $\theta$ is holomorphic, $\phi^i$ is holomorphic
  in the direction of $\Z$. As a consequence,
  \begin{equation*}
    \bar{\partial}_V\mathbf{\Phi}=\bar{\partial}_{X\times\Z}\phi^i\wedge
    s_i+\phi^i\bar{\partial}_{A_t}s_i=\bar{\partial}_X\phi^i\wedge s_i+\phi^i\bar{\partial}_{A_t}s_i=\bar{\partial}_{A_t}\Phi_t=0
  \end{equation*}

  Finally, we need to show that if $(\V_t,\mathbf{\Phi}_t)$ is
  isomorphic to $(\E_{A_t},\Phi_t)$ for any $t\in\Z$. If
  $i_t(x)=(x,t)$ is the holomorphic map $X\to X\times\Z$, then the
  holomorphic structure on $i_t^*\V$ is given by the pullback
  $\bar{\partial}$-operator $i_t^*\bar{\partial}_V$. Since
  \begin{equation*}
    [i_t^*(\bar{\partial}_V)](i_t^*p_X^*s)=i_t^*(\bar{\partial}_Vs)=\bar{\partial}_{A_t}s
  \end{equation*}
  for any smooth local section $s$ of $E$, we see that $i_t^*\V$ is
  isomorphic to $\E_{A_t}$. Moreover, $i_t^*\mathbf{\Phi}=\Phi_t=\Phi$.
\end{proof}
\begin{corollary}\label{sec:holomorphicity-biholo-Ms}
  The comparison map $i\colon\M_{an}^s\to\M_{alg}^s$ is a biholomorphism.
\end{corollary}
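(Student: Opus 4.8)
The plan is to prove the corollary in two stages: first reduce it to a single holomorphicity statement, and then upgrade that statement by a soft complex-analytic argument. Since $i$ carries stable Higgs bundles to stable ones and both $\M_{an}^s$ and $\M_{alg}^s$ are open (the former by the upper-semicontinuity argument preceding Proposition~\ref{sec:holomorphicity-smooth-Ms}, the latter by \cite[Theorem 4.7]{Simpson1994}), Corollary~\ref{sec:continuity-homeo} already shows that $i\colon\M_{an}^s\to\M_{alg}^s$ is a homeomorphism. The source is a complex manifold by Proposition~\ref{sec:holomorphicity-smooth-Ms}, and the target is smooth by \cite[Corollary 11.7]{Simpson1994} together with \cite[Proposition 7.1]{nitsure1991moduli}; being homeomorphic, the two have the same complex dimension. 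I would then invoke the classical fact that an injective holomorphic map between complex manifolds of equal dimension is a local biholomorphism, so that once $i|_{\M_{an}^s}$ is known to be holomorphic, being a holomorphic bijection it is automatically a biholomorphism. In this way the whole corollary reduces to the holomorphicity of $i|_{\M_{an}^s}$.

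To establish holomorphicity I would work in a Kuranishi chart. Fix $[A,\Phi]\in\M_{an}^s$ with $(A,\Phi)\in\B^s$ satisfying Hitchin's equation. By Corollary~\ref{sec:kuran-local-models-Cinfty-Sobolev-homeo} and Proposition~\ref{sec:holomorphicity-smooth-Ms}, the map $\varphi\colon\Z\to\M_{an}^s$ induced by the Kuranishi map $\theta\colon\Z\to\B^s$ is a biholomorphism onto an open neighborhood of $[A,\Phi]$, where $\Z$ is an open neighborhood of $0$ in $\bH^1$. Since $\varphi$ is a biholomorphism, it suffices to show that the composite $i\varphi\colon\Z\to\M_{alg}^s$ is holomorphic. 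Here the Kuranishi family is the key input: by Proposition~\ref{sec:holomorphicity-Kuranishi-family} there is a holomorphic family $(\V,\mathbf{\Phi})$ of stable Higgs bundles over $\Z$ with $(\V_t,\mathbf{\Phi}_t)\cong(\E_{A_t},\Phi_t)$ for $(A_t,\Phi_t)=\theta(t)$. Since the analytification of $\M_{alg}$ is a coarse moduli space for semistable Higgs bundles in the category of complex spaces (the remark after \cite[Corollary 5.6]{Simpson1994}), this family induces a holomorphic classifying map $f\colon\Z\to\M_{alg}$ sending $t$ to $[\V_t,\mathbf{\Phi}_t]_s$. Because the fibers are stable, s-equivalence coincides with isomorphism, so $f(t)=[\E_{A_t},\Phi_t]_s=i[A_t,\Phi_t]=i\varphi(t)$; hence $i\varphi=f$ is holomorphic. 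Letting $[A,\Phi]$ vary over $\M_{an}^s$ shows that $i|_{\M_{an}^s}$ is holomorphic, and the reduction of the first paragraph then yields the biholomorphism.

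The step I expect to be the main obstacle is the passage from the holomorphic Kuranishi family to the holomorphic classifying map $f$. This relies on Simpson's coarse moduli space being coarse in the \emph{analytic} category, so that a holomorphic family of stable Higgs bundles over the complex manifold $\Z$ genuinely induces a holomorphic map to $\M_{alg}$; this functorial input is precisely the content of the remark after \cite[Corollary 5.6]{Simpson1994}. Granting it, the remaining ingredients---openness and smoothness of both sides, the homeomorphism of Corollary~\ref{sec:continuity-homeo}, and the local-biholomorphism criterion for injective holomorphic maps between equidimensional complex manifolds---are either already established in this section or are routine appeals to the theory of holomorphic maps.
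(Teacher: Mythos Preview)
Your proposal is correct and follows essentially the same route as the paper: reduce to showing $i\varphi$ is holomorphic via the Kuranishi chart, use the Kuranishi family of Proposition~\ref{sec:holomorphicity-Kuranishi-family} together with the analytic coarse moduli property of $\M_{alg}$ to produce the holomorphic classifying map, identify it with $i\varphi$, and then conclude that the holomorphic bijection between smooth manifolds is a biholomorphism. The only cosmetic difference is that you spell out the equal-dimension local-biholomorphism criterion explicitly, whereas the paper simply invokes that a holomorphic bijection between smooth complex manifolds is a biholomorphism.
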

\begin{proof}
  Since the analytification of $\M_{alg}$ is the coarse moduli space
  of semistable Higgs bundles in the category of complex spaces, the
  family $(\V,\mathbf{\Phi})$ constructed in
  Proposition~\ref{sec:holomorphicity-Kuranishi-family} induces a
  holomorphic map
  \begin{equation*}
    \Z\to\M_{alg}^s\qquad t\mapsto[\V_t,\mathbf{\Phi}_t]
  \end{equation*}
  On the other hand, the map $i\varphi\colon\Z\to\M_{alg}^s$ is given by
  \begin{equation*}
    i\varphi(t)=i[A_t,\Phi_t]=[\E_{A_t},\Phi_t]=[\V_t,\mathbf{\Phi}_t]
  \end{equation*}
  Hence, $i\varphi$ is holomorphic. Since both $\M_{an}^s$ and
  $\M_{alg}^s$ are smooth complex manifolds, and $i$ is a holomorphic
  bijection, $i$ is a biholomorphism.
\end{proof}
Then, we extend the holomorphicity of $i^{-1}$ on $\M_{alg}^s$ to the
full moduli space $\M_{alg}$.
\begin{corollary}
  The map $i^{-1}\colon\M_{alg}\to\M_{an}$ is holomorphic.
\end{corollary}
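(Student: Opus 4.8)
The plan is to extend the already-established holomorphicity of $i^{-1}$ on the stable locus across the strictly semistable locus by a Riemann-type removable-singularity argument. Two inputs are in hand. By Corollary~\ref{sec:continuity-homeo}, $i$ is a homeomorphism, so $i^{-1}$ is continuous on all of $\M_{alg}$; and by Corollary~\ref{sec:holomorphicity-biholo-Ms}, the restriction $i^{-1}\colon\M_{alg}^s\to\M_{an}^s$ is holomorphic. Set $S=\M_{alg}\setminus\M_{alg}^s$. Since $\M_{alg}^s$ is open in $\M_{alg}$ by \cite[Theorem 4.7]{Simpson1994}, the set $S$ is a closed analytic subset; moreover $\M_{alg}$ is normal and irreducible by \cite[Theorem 4.7, Theorem 11.1]{Simpson1994}, and for $g\geq2$ the locus $S$ has codimension at least $2$ in $\M_{alg}$ (see \cite{Simpson1994}). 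Thus $i^{-1}$ is a continuous map, holomorphic off an analytic subset of codimension $\geq2$ in a normal complex space, and the goal is to conclude that it is holomorphic everywhere.

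First I would reduce the problem for the map $i^{-1}$ to an extension problem for scalar functions. Fix a point $p\in S$ and choose a local closed embedding $\iota\colon\V\hookrightarrow\Omega\subset\C^N$ of a neighborhood $\V$ of $i^{-1}(p)$ in $\M_{an}$ into an open set $\Omega\subset\C^N$; such an embedding exists because $\M_{an}$ is a reduced complex space (Theorem~\ref{sec:gluing-local-models-complex-space-structure}). After shrinking to a neighborhood $U$ of $p$ with $i^{-1}(U)\subset\V$, the composite $\iota\circ i^{-1}|_U$ is a tuple of component functions $f_1,\dots,f_N\colon U\to\C$, each continuous on $U$ and holomorphic on $U\setminus S$ (the latter because $i^{-1}$ maps $U\setminus S$ into $\V\cap\M_{an}^s$ holomorphically).

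Next I would invoke the second Riemann extension theorem, in the form that characterizes normality: on a normal complex space every holomorphic function defined off an analytic subset of codimension $\geq2$ extends uniquely to a holomorphic function on the whole space. Since $\M_{alg}$ is normal and $S$ has codimension $\geq2$, each $f_j$ extends to some $\tilde f_j\in\sO(U)$. As $\M_{alg}$ is irreducible, $S$ is nowhere dense, so $U\setminus S$ is dense in $U$; since $\tilde f_j$ and $f_j$ are continuous and agree on $U\setminus S$, they agree on all of $U$. Hence $\iota\circ i^{-1}|_U=(\tilde f_1,\dots,\tilde f_N)$ is holomorphic as a map $U\to\C^N$, and because its image lies in the closed analytic subset $\iota(\V)$ and $\iota$ is a closed embedding, $i^{-1}|_U$ is holomorphic. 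Since $p\in S$ was arbitrary and $i^{-1}$ is already holomorphic on $\M_{alg}^s$, it follows that $i^{-1}$ is holomorphic on all of $\M_{alg}$.

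The main obstacle is not the extension step itself but verifying the hypotheses that make it available in the strong form: the normality of $\M_{alg}$ (from \cite{Simpson1994}) together with the estimate $\mathrm{codim}(\M_{alg}\setminus\M_{alg}^s)\geq2$, which is precisely what allows the removable-singularity theorem to apply without any boundary-value input. The continuity of $i^{-1}$ furnished by Corollary~\ref{sec:continuity-homeo} then does double duty: it identifies the function-theoretic extensions $\tilde f_j$ with the original $f_j$, and it guarantees that the extended map still takes values in $\M_{an}$, so no separate argument is needed to see that the extension lands in the target complex space.
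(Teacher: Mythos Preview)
Your proposal is correct and follows essentially the same route as the paper: both arguments use the continuity of $i^{-1}$, its holomorphicity on the stable locus, the normality of $\M_{alg}$, and the codimension-$\geq 2$ bound on the strictly semistable locus to invoke the Riemann extension theorem, then identify the extension with the original via density. The only cosmetic difference is that the paper tests holomorphicity by pulling back an arbitrary holomorphic function on $\M_{an}$, whereas you embed $\M_{an}$ locally in $\C^N$ and work with coordinate functions---these are equivalent; note also that the paper cites \cite{faltings1993stable} rather than \cite{Simpson1994} for the codimension estimate.
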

\begin{proof}
  Recall that $\M_{an}$ is assumed to be reduced, and $\M_{alg}$ is
  reduced. Take a holomorphic $f\colon U\to\C$ where $U$ is an open
  subset of $\M_{an}$. Then, the pullback $(i^{-1})^*f$ is continuous
  on the open set $i(U)$ and holomorphic on $i(U)\cap\M_{alg}^s$. By
  \cite{kuhlmann1961normalisierung}, the normality of $\M_{alg}$
  implies the normality of its analytification. Since $\M_{alg}^s$ is
  open in the Zariski topology, $\M_{alg}\setminus\M_{alg}^s$ is a
  closed analytic subset of $\M_{alg}$ in the analytic topology. Since
  $\M_{alg}\setminus\M_{alg}^s$ has codimension $\geq2$ (see
  \cite[Theorem II.6]{faltings1993stable}), then the Riemann extension
  theorem for normal complex spaces implies that the restriction
  $(i^{-1})^*f\colon\M_{alg}^s\cap i(U)\to\C$ can be extended to a
  holomorphic function $g$ on $i(U)$. Since $\M_{alg}$ is irreducible,
  the open set $\M_{alg}^s$ is dense in the Zariski topology and hence
  in the analytic topology (\cite[\S10, Theorem
  1]{mumford1996red}). Since both $(i^{-1})^*f$ and $g$ are continuous
  and agree on an open dense subset $\M_{alg}^s\cap i(U)$ of $i(U)$,
  $(i^{-1})^*f=g$. This shows that $i^{-1}$ is holomorphic.
\end{proof}

The final ingredient is the normality of $\M_{an}$.
\begin{lemma}\label{sec:holomorphicity-normality}
  $\M_{an}$ is a normal complex space.
\end{lemma}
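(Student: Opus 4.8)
The plan is to exploit the fact that normality is a local property, together with the explicit local model for $\M_{an}$ furnished by Theorem~\ref{sec:sing-kuran-spac-local-model}. Around any point $[A,\Phi]$ with $\mu(A,\Phi)=0$, that theorem identifies a neighborhood of $[A,\Phi]$ in $\M_{an}$ with an open neighborhood of $[0]$ in the affine GIT quotient $\nu_{0,\C}^{-1}(0)\sslash K^\C$. Since $\bH^1$ is finite-dimensional and $K$ is a compact Lie group with reductive complexification $K^\C$, this is precisely the finite-dimensional linear hyperk\"ahler quotient of $\bH^1$ by $K$, realized as the affine GIT quotient of the quadratic cone $\nu_{0,\C}^{-1}(0)$ by $K^\C$. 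Thus it suffices to prove that every such local model is a normal complex space; normality of $\M_{an}$ then follows by transporting the conclusion through the local biholomorphisms $\varphi$ of Theorem~\ref{sec:sing-kuran-spac-local-model}.

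For the normality of the local model I would appeal to the finite-dimensional theory: $\nu_{0,\C}^{-1}(0)\sslash K^\C$ is exactly the kind of linear hyperk\"ahler quotient analyzed by Mayrand, and its normality is part of the structure theory in \cite{mayrand2018local}. This is the cleanest route and involves no circularity, since Theorem~\ref{sec:sing-kuran-spac-local-model} is only an \emph{infinite}-dimensional version of one statement in Mayrand's Theorem 1.4, whereas here I need merely the \emph{finite}-dimensional normality. As a self-contained alternative one can attempt Serre's criterion $R_1+S_2$ on the reduced local model $Y=\nu_{0,\C}^{-1}(0)\sslash K^\C$. The space $Y$ is reduced, and moreover irreducible, being homeomorphic (Corollary~\ref{sec:continuity-homeo}) to an open subset of the irreducible variety $\M_{alg}$. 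Condition $R_1$ is immediate: the stable locus of $Y$ corresponds locally to $\M_{an}^s$, which is smooth by Proposition~\ref{sec:holomorphicity-smooth-Ms}, and its complement corresponds under $i$ to $\M_{alg}\setminus\M_{alg}^s$, which has codimension $\geq 2$ by \cite{faltings1993stable}; as $i$ is a homeomorphism with holomorphic inverse it preserves the dimension of analytic subsets, so the singular locus of $Y$ has codimension $\geq 2$ and $Y$ is regular in codimension one.

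The genuine difficulty is Serre's condition $S_2$, equivalently the Cohen--Macaulayness of the quotient or the normality of the cone $\nu_{0,\C}^{-1}(0)$ itself. Unlike $R_1$, this cannot be read off from the stable locus, and the cone need not be a complete intersection along the deeper strata, so $S_2$ is not automatic. The natural way to obtain it is the GIT input that invariants inherit good singularities from the cone---for instance Boutot's theorem that the invariant ring of a ring with rational singularities again has rational, hence Cohen--Macaulay, singularities---but this presupposes that $\nu_{0,\C}^{-1}(0)$ has rational singularities, which is itself a nontrivial fact about the quadratic moment-map cone. For this reason I expect the decisive step to be the finite-dimensional normality statement, and in the proof I would invoke \cite{mayrand2018local} for it directly rather than reprove it, then transport normality to $\M_{an}$ through Theorem~\ref{sec:sing-kuran-spac-local-model}.
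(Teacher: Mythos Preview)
Your reduction is exactly the paper's first step: by Theorem~\ref{sec:sing-kuran-spac-local-model}, normality of $\M_{an}$ follows once one knows that the affine GIT quotient $\nu_{0,\C}^{-1}(0)\sslash K^\C$ is normal at $[0]$. The divergence is in how you justify that last point.

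Your primary route is to cite \cite{mayrand2018local} for normality of an arbitrary linear hyperk\"ahler quotient. This is where I would push back: Mayrand's Theorem~1.4 describes the \emph{local model} of a singular hyperk\"ahler quotient near a fixed point (stratification, complex-symplectic structure, etc.), but I do not believe it contains a general normality statement for the quotient $\nu_{0,\C}^{-1}(0)\sslash K^\C$ of a quadratic complex-moment cone by an arbitrary reductive stabilizer. Normality of such cones or their quotients is genuinely delicate and fails in related symplectic-reduction settings; absent a precise reference, this is a gap, and your own discussion of the $S_2$ condition correctly identifies why one cannot expect to patch it by Serre's criterion alone.

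The paper avoids the abstract question entirely. It does \emph{not} prove that generic linear hyperk\"ahler quotients are normal. Instead it exploits the fact that this particular cone $Q=\nu_{0,\C}^{-1}(0)$ comes from the deformation theory of a specific Higgs bundle: by Simpson's formality result \cite[Proposition~10.5]{Simpson1994}, the formal completion of $Q$ at $0$ is isomorphic to the formal completion at $(\E_A,\Phi,x)$ of a Luna slice $Y\subset\mathbf{R}_{Dol}(X,x,n)$, and $\mathbf{R}_{Dol}(X,x,n)$ is normal by \cite[Corollary~11.7]{Simpson1994}. Hence the formal completion of $Q$ at $0$ is normal, so $Q$ is normal at $0$; taking $K^\C$-invariants preserves normality, and analytification preserves normality, giving the result. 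In short, the paper transfers normality from the algebraic moduli space $\mathbf{R}_{Dol}$ down to the local cone via formal completions, rather than attempting to prove normality of the cone from first principles. Your strategy would be cleaner if the cited normality were available, but as written the decisive input is missing; the paper's detour through Simpson is what actually closes the argument.
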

\begin{proof}
  Let us temporarily use $Q$ to mean $\nu_{0,\C}^{-1}(0)$ viewed as an
  affine variety in $\bH^1$ and $Q^{an}$ to mean the analytification
  of $Q$. By Theorem~\ref{sec:sing-kuran-spac-local-model}, it
  suffices to prove that $Q^{an}\sslash K^\C$ is normal at the origin
  $[0]$. Here, $Q^{an}\sslash K^\C$ is the analytic GIT quotient of
  $Q^{an}$ by $K^\C$. By \cite{heinzner1994reduction}, the
  analytification of the affine GIT quotient $Q\sslash K^\C$ is
  $Q^{an}\sslash K^\C$.

  Now, we fix a Higgs bundle $(A,\Phi)$ such that $\mu(A,\Phi)=0$. By
  choosing a point $x\in X$, the holomorphic bundle $(\E_A,\Phi,x)$
  defines a point in the moduli space $\mathbf{R}_{Dol}(X,x,n)$ of the
  semistable Higgs bundles of rank $n$ and degree $0$ and with a frame
  at $x$. In \cite[Corollary 11.7]{Simpson1994}, it is shown that
  $\mathbf{R}_{Dol}(X,x,n)$ is normal. Moreover, in the proof of
  \cite[Proposition 10.5]{Simpson1994}, it is shown that the formal
  completion of $Q$ (regarded as an affine variety in $\bH^1$) at $0$
  is isomorphic to the formal completion of a subscheme $Y$ at
  $(\E_A,\Phi,x)$. Here, $Y$ is a local slice, provided by Luna's
  slice theorem (see \cite[Theorem 4.2.12]{huybrechts2010geometry}) at
  $(\E_A,\Phi,x)$ for the $GL_n(\C)$ action on
  $\mathbf{R}_{Dol}(X,x,n)$. Moreover, since $\mathbf{R}_{Dol}(X,x,n)$
  is normal at $(\E_A,\Phi,x)$, $Y$ can be taken to be normal at
  $(\E_A,\Phi,x)$. As a consequence, the formal completion of $Q$ is
  normal at $0$. By
  \cite[\href{https://stacks.math.columbia.edu/tag/0FIZ}{Tag
    0FIZ}]{stacks-project}, $Q$ is normal at $0$. Since taking
  invariants commutes with localizations and preserves the normality,
  we conclude that $Q\sslash K^\C$ is normal at $[0]$. Since normality
  is preserved by the analytification (see
  \cite{kuhlmann1961normalisierung}), we see that $Q^{an}\sslash K^\C$
  is normal at $[0]$.
\end{proof}

The proof of Theorem~\ref{sec:introduction-comparison} rests on the
following theorem.
\begin{theorem}[{\cite[Theorem, p.166]{grauert2012coherent}}]\label{sec:holomorphicity-thm-grauert}
  Let $f\colon X\to Y$ be an injective holomorphic map between reduced
  and pure dimensional complex spaces. Assume that $Y$ is normal and
  that $\dim X=\dim Y$. Then $f$ is open, and $f$ maps $X$
  biholomorphically onto $f(X)$. In particular, the space $X$ is
  normal.
\end{theorem}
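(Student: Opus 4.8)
The plan is to first establish that $f$ is open, deduce that it is a homeomorphism onto an open (hence normal) subspace $Z:=f(X)\subseteq Y$, and then upgrade this homeomorphism to a biholomorphism by a commutative-algebra argument exploiting the normality of $Z$. Openness is the analytic heart of the matter. Write $n=\dim X=\dim Y$. Since $X$ is pure $n$-dimensional, $Y$ is normal and therefore locally irreducible of pure dimension $n$, and $f$ is injective so that every fibre $f^{-1}(f(x))$ is a single point and in particular has dimension $0=n-n$, Remmert's open mapping theorem applies and shows that $f$ is open. Consequently $Z=f(X)$ is open in $Y$, the restriction $f\colon X\to Z$ is a continuous open bijection and hence a homeomorphism, and $Z$, being an open subspace of the normal space $Y$, is itself a normal complex space. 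The remaining task is to prove that the inverse homeomorphism $Z\to X$ is holomorphic.

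Because $f\colon X\to Z$ is a homeomorphism it is proper, and with single-point fibres it is a finite holomorphic map. Hence the direct image $\mathcal{A}:=f_*\mathcal{O}_X$ is a coherent sheaf of $\mathcal{O}_Z$-algebras, finite as an $\mathcal{O}_Z$-module, and the comorphism furnishes a homomorphism $\varphi\colon\mathcal{O}_Z\to\mathcal{A}$. Since $Z$ is reduced and $f$ is surjective, $\varphi$ is injective; since $X$ is reduced, $\mathcal{A}$ is a sheaf of reduced rings. Next I would show that $\varphi$ is an isomorphism over a dense open subset. Indeed, the smooth locus $X^{\mathrm{sm}}$ is dense in $X$, $Z^{\mathrm{sm}}$ is dense open in $Z$, and because $f$ is a homeomorphism the open set $W:=f^{-1}(Z^{\mathrm{sm}})\cap X^{\mathrm{sm}}$ is dense; on $W$ the map $f$ is an injective holomorphic map between complex manifolds of equal dimension $n$, hence (by the classical fact that such a map has nonvanishing Jacobian) a local biholomorphism, so $f\colon W\to f(W)$ is a biholomorphism and $\varphi$ restricts to an isomorphism over the dense open set $f(W)\subseteq Z$. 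In particular $f$ has generic degree one.

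The final step uses normality to promote this generic isomorphism to a global one. Fix $z\in Z$ and set $R=\mathcal{O}_{Z,z}$, which is a normal analytic local ring, hence integrally closed and (normality being local irreducibility) a domain with fraction field $Q(R)$; set $B=\mathcal{A}_z=\mathcal{O}_{X,x}$, where $x$ is the unique preimage of $z$, a reduced module-finite $R$-algebra containing $R$. Module-finiteness makes $B$ integral over $R$; reducedness embeds $B$ into its total quotient ring $Q(B)$; and the generic-degree-one statement identifies $Q(B)$ with $Q(R)$, since every minimal prime of $B$ lies over the zero ideal of the domain $R$ and the extension is birational over the dense open locus $f(W)$. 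Thus $R\subseteq B\subseteq Q(R)$ with $B$ integral over $R$, and the integral closedness of $R$ forces $B=R$. Therefore $\varphi$ is an isomorphism, $f$ is a biholomorphism of $X$ onto the open set $Z\subseteq Y$, and $X\cong Z$ inherits normality from $Y$.

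I expect the main obstacle to be the openness step: it rests on the full strength of Remmert's open mapping theorem, and one must verify carefully that injectivity really forces the fibre dimensions to equal $\dim X-\dim Y$ so that the theorem applies — this is precisely where pure-dimensionality of $X$ and local irreducibility (normality) of $Y$ enter. The concluding integral-closure argument is routine once finiteness, reducedness, and generic degree one are in hand, the only delicate point being the identification $Q(B)=Q(R)$, which I would justify by localizing at the minimal primes and invoking the biholomorphism already established over $f(W)$.
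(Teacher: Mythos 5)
The paper offers no proof of this theorem: it is imported verbatim from Grauert--Remmert \cite{grauert2012coherent} as a black box, so there is no internal argument to compare yours against. What you have written is essentially a reconstruction of the standard proof of the cited result, and its skeleton is sound: injectivity makes every nonempty fibre $0$-dimensional, and since $X$ is pure $n$-dimensional while the normal space $Y$ is locally irreducible and pure $n$-dimensional, Remmert's open mapping criterion applies and gives openness; then $f\colon X\to Z=f(X)$ is a finite surjective holomorphic map onto a normal space, $f_*\mathcal{O}_X$ is coherent by the finite mapping theorem, the comorphism $\mathcal{O}_Z\to f_*\mathcal{O}_X$ is injective and generically an isomorphism, and normality upgrades this to a global isomorphism.

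The one step you flag as delicate does need more care than your sketch provides. To place $B=\mathcal{O}_{X,x}$ inside $Q(R)$, $R=\mathcal{O}_{Z,z}$, it is not enough that the minimal primes of $B$ contract to $(0)$: if $B$ had two minimal primes, $Q(B)$ would be a product of fields, not a field. Either first show injectivity forces the germ $(X,x)$ to be irreducible (each $n$-dimensional component of $(X,x)$ maps, by finiteness and the irreducibility of $(Z,z)$, onto the full germ $(Z,z)$, and two components would yield two preimages of a generic point off the image of their intersection, contradicting injectivity), or bypass the issue: the cokernel $\mathcal{C}=f_*\mathcal{O}_X/\mathcal{O}_Z$ is coherent with nowhere-dense support, hence has a nonzero annihilator $a\in R$, and multiplication by $a$ is injective on $B$ because $a\circ f$ vanishes only on a nowhere-dense subset of the reduced space $X$; this gives $R\subseteq B\subseteq a^{-1}R\subseteq Q(R)$, and integral closedness of $R$ finishes. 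Alternatively, one can avoid the algebra entirely: each germ $b\in B$ induces, via the homeomorphism $f$, a continuous function near $z$ that is holomorphic off a nowhere-dense analytic set, and the Riemann extension theorem on the normal space $Z$ --- the very tool this paper uses to extend $i^{-1}$ across $\M_{alg}\setminus\M_{alg}^s$ --- yields $b\in R$ directly.
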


\begin{proof}[Proof of Theorem~\ref{sec:introduction-comparison}]
  Now the map
  \begin{equation*}
    i^{-1}\colon\M_{alg}\to\M_{an}
  \end{equation*}
  is a holomorphic homeomorphism. To use
  Theorem~\ref{sec:holomorphicity-thm-grauert}, we verify that
  $\M_{an}$ is pure dimensional, normal and
  $\dim\M_{an}=\dim\M_{alg}$. By
  Lemma~\ref{sec:holomorphicity-normality}, $\M_{an}$ is normal. Since
  $\M_{alg}$ is connected in the analytic topology, $\M_{an}$ is
  connected. Then, the normality and connectedness of $\M_{an}$
  implies that $\M_{an}$ is irreducible and hence pure dimensional
  (see \cite[Theorem, p.168]{grauert2012coherent}). Finally, by
  Corollary~\ref{sec:holomorphicity-biholo-Ms},
  $\dim\M_{an}=\dim\M_{alg}$.
\end{proof}

\bibliography{..//..//references}
\bibliographystyle{abbrv}
\end{document}